\documentclass{article}
\usepackage{latexsym,amsfonts,amsthm,amsmath,amscd,amssymb}
\usepackage[dvips]{graphicx}

\setlength{\textheight}{23cm} \setlength{\textwidth}{16cm}
\setlength{\topmargin}{-1cm} \setlength{\oddsidemargin}{10mm}

\newtheorem{lemma}{Lemma}[section]
\newtheorem{thm}[lemma]{Theorem}
\newtheorem{rem}[lemma]{Remark}
\newtheorem{prop}[lemma]{Proposition}
\newtheorem{cor}[lemma]{Corollary}

\newtheorem{example}[lemma]{Example}
\newtheorem{defn}[lemma]{Definition}

\newcommand{\cl}{C \kern -0.1em \ell}

\newcommand\matR{{\mathbb{R}}}
\newcommand\matN{{\mathbb{N}}}

\newcommand\calD{{\mathcal D}}

\begin{document}

\title{Diffeological De Rham operators}

\author{Ekaterina~{\textsc Pervova}}

\maketitle

\begin{abstract}
\noindent We consider the notion of the De Rham operator on finite-dimensional diffeological spaces such that the diffeological counterpart $\Lambda^1(X)$ of the cotangent bundle, the so-called 
pseudo-bundle of values of differential 1-forms, has bounded dimension. The operator is defined as the composition of the Levi-Civita connection on the exterior algebra pseudo-bundle 
$\bigwedge(\Lambda^1(X))$ with the standardly defined Clifford action by $\Lambda^1(X)$; the latter is therefore assumed to admit a pseudo-metric for which there exists a Levi-Civita connection. Under 
these assumptions, the definition is fully analogous the standard case, and our main conclusion is that this is the only way to define the De Rham operator on a diffeological space, since we show that there 
is not a straightforward counterpart of the definition of the De Rham operator as the sum $d+d^*$ of the exterior differential with its adjoint. We show along the way that other connected notions do not 
have full counterparts, in terms of the function they are supposed to fulfill, either; this regards, for instance, volume forms, the Hodge star, and the distinction between the $k$-th exterior degree of 
$\Lambda^1(X)$ and the pseudo-bundle of differential $k$-forms $\Lambda^k(X)$.

\noindent MSC (2010): 53C15 (primary), 57R35 (secondary).
\end{abstract}

\section*{Introduction}

The concept of a \emph{diffeological space} (introduced in \cite{So1}, \cite{So2}; see \cite{iglesiasBook} for a recent and comprehensive treatment, and also \cite{CSW_Dtopology}, \cite{CWhomotopy}, 
\cite{wu}, \cite{watts}, \cite{iglOrb} for the development of various specific aspects) is a simple and flexible generalization of the concept of a smooth manifold (see \cite{St} for a review of other similar 
directions). Many constructions of differential geometry also generalize, although for some of them there is not (yet) a universal agreement on the choice of the proper counterpart of such-and-such 
notion; this is the case of the tangent bundle, for which there are many proposed versions; the most accepted one at the moment seems to be that of the \emph{internal tangent bundle} \cite{CWtangent} 
(see \cite{HeTangent} for the earlier construction on which it is partially based). Whereas for the cotangent bundle and higher-order differential forms there is a standard version, see for instance 
\cite{iglesiasBook}, \cite{Lau}, \cite{karshon-watts} (as well as \cite{HeCohomology}). Finally, see \cite{magnot1}, \cite{magnot2} for a more analytic context.

A certain development of other concepts of differential geometry on diffeological spaces appears in \cite{vincent}, where (in particular) the basic concept of the diffeological counterpart of a smooth vector bundle 
was developed to some extent. However, the notion itself and its peculiarities with respect to the standard one were already investigated in \cite{iglFibre}; this is where diffeological bundles (which herein we call 
\emph{pseudo-bundles}) were first introduced. The other concepts follow from there, in particular, those needed to define a Dirac operator, which was done in \cite{dirac}. Since diffeological versions of some 
classic instances of Dirac operators were not considered therein, in this paper we try to fill this void, describing the diffeological version of the most classic one of all, the De Rham operator. Our main 
conclusions in this respect are of two sorts. The first is that there does not appear to be any straightforward way of defining a diffeological counterpart of the classical operator $d+d^*$ in the diffeological 
context. This starts from the fact that the differential itself, defined on the spaces $\Omega^k(X)$, does not descend to the pseudo-bundles $\Lambda^k(X)$. Furthermore, the exterior product defined between 
the former vector spaces does not yield an identification between $\bigwedge^k(\Lambda^1(X))$ and $\Lambda^k(X)$, although it gives a natural, possibly surjective, map from the former to the latter. 
We also show that the dimensions of the fibres of $\Lambda^k(X)$ do not truly correlate with the (diffeological) dimension of the space $X$; if $\dim(X)=n$ then $\Lambda^k(X)$ are indeed trivial for $k>n$, 
but for $k\leqslant n$ the dimensions of fibres of $\Lambda^k(X)$ can be arbitrarily large. Finally, since $\Lambda^1(X)$ may have fibres of varying dimension, there does not seem to be a straightforward 
definition of the Hodge star on its exterior degrees.

Diffeological spaces form a very wide category, so that a statement applying to them all would necessarily risk being too general so as to be meaningless. This issue we resolve by dedicating significant 
attention to the diffeological gluing procedure (\cite{pseudobundle}) applied to pairs of diffeological spaces, that in turn satisfy some additional assumptions. Mostly these assumptions have to do with being 
able to put a \emph{pseudo-metric} on the corresponding pseudo-bundles $\Lambda^k(X)$, and with the extendability of differential forms, that we define below. Under these assumptions we do describe 
the behavior of pseudo-bundles $\Lambda^k(X)$ under gluing, as well as that of the De Rham groups.

\paragraph{Acknowledgments} Discussions with Prof. Riccardo Zucchi benefitted significantly this work.

\section{Main definitions}

A recent and comprehensive exposition of the main notions and constructions of diffeology can be found in \cite{iglesiasBook}; that particularly includes the De Rham cohomology (see also 
\cite{HeCohomology}). The homological algebra is discussed in a recent \cite{wu}.

\subsection{Diffeological spaces, pseudo-bundles, and pseudo-metrics}

The concept of a diffeological space is a natural generalization of that of a smooth manifold; briefly, the two differ in that for a diffeological space the notion of atlas is taken by that of a \emph{diffeological 
structure} whose charts have domains of definition of varying dimension. Furthermore, a diffeological space is not subject to the same topological requirements, such as paracompactness etc.

\begin{defn}
A \textbf{diffeological space} is any set $X$ endowed with a \textbf{diffeological structure} (or \textbf{diffeology}), which is the set $\calD$ of maps, called \textbf{plots}, $\calD=\{p:U\to X\}$, for all domains 
$U\subseteq\matR^n$ and for all $n\in\matN$ that satisfies the \emph{covering} condition of every constant map being a plot, the \emph{smooth compatibility} condition of every pre-compostion $p\circ h$ 
of any plot $p:U\to X$ with any ordinary smooth map $h:V\to U$, being again a plot, and the following \emph{sheaf condition}: if $U=\cup_iU_i$ is an open cover of a domain $U$ and $p:U\to X$ is a set map 
such that each restriction $p|_{U_i}:U_i\to X$ is a plot (that is, it belongs to $\calD$) then $p$ itself is a plot.
\end{defn} 

For two diffeological spaces $X$ and $Y$, a set map $f:X\to Y$ is \textbf{smooth} if for every plot $p$ of $X$ the composition $f\circ p$ is a plot of $Y$. If the \emph{vice versa} is always true locally, \emph{i.e.} 
if, whenever the composition of $f\circ p$ with some set map $p:U\to X$ is a plot of $Y$, the map $p$ is necessarily a plot of $X$, and furthermore $f$ is surjective, then $f$ is called a \textbf{subduction}. Said in 
reverse, if, given two diffeological spaces $X$ and $Y$, there exists a subduction $f$ of $X$ onto $Y$, then the diffeology of $Y$ is said to be the \textbf{pushforward} of the diffeology of $X$ by the map $f$. For
instance, if $X$ is a diffeological space and $\sim$ is any equivalence relation on $X$ then the \textbf{quotient diffeology} on $X/\sim$ is defined by the requirement that the quotient projection $X\to X/\sim$ be 
a subduction. Notice in particular that, unlike in the case of smooth manifolds, every quotient of a diffeological space is again a diffeological space. The same is true for any subset $Y\subseteq X$ of a 
diffeological space $X$; it is endowed with the \textbf{subset diffeology} that consists of precisely the plots of $X$ whose ranges are contained in $Y$.

A smooth manifold is an instance of a diffeological space; the corresponding diffeology is given by the set of all usual smooth maps into it. Standard diffeologies are defined for disjoint unions, direct products, 
and spaces of smooth maps between two diffeological spaces (see \cite{iglesiasBook}). For a diffeological space carrying an algebraic structure there is an obvious notion of smoothness of that structure, so 
there are notions of a diffeological vector space, diffeological group, etc. 

The diffeological counterpart of a smooth vector bundle, that we call a \textbf{diffeological vector pseudo-bundle}, is defined analogously to the standard notion, with the exception that there is no requirement 
of there being an atlas of local trivializations. The precise definition is as follows.

\begin{defn}
A \textbf{diffeological vector pseudo-bundle} is a smooth surjective map $\pi:V\to X$ between two diffeological spaces that satisfies the following requirements: 1) for every $x\in X$ the pre-image $\pi^{-1}(x)$ 
carries a vector space structure; 2) the induced operations of addition $V\times_X V\to V$ and scalar multiplication $\matR\times V\to V$ are smooth for the subset diffeology on 
$V\times_X V\subseteq V\times V$, for the product diffeologies on $V\times V$ and $\matR\times V$, and for the standard diffeology on $\matR$; 3) the zero section $X\to V$ is smooth.
\end{defn} 

This notion appeared in \cite{iglFibre}, where it is called \emph{diffeological fibre bundle}, and was considered in \cite{vincent} under the name of \emph{regular vector bundle} and in \cite{CWtangent}, where it 
is termed \emph{diffeological vector space over $X$}. Some developments of the notion appear in \cite{pseudobundle}.

For such pseudo-bundles there are suitable counterparts of all the usual operations on vector bundles, such as direct sums, tensor products, and taking dual bundles. It is worth noting that already in the case 
of (finite-dimensional) diffeological vector spaces the expected notion of duality leads, in general, to different conclusions, specifically the diffeological dual of a vector space may not be isomorphic to the space 
itself. A long-ranging consequence is that there is no proper analogue of a Riemannian metric on a diffeological vector pseudo-bundle, although there is an obvious substitute 
(\cite{pseudobundle-pseudometrics}).

\begin{defn}
Let $\pi:V\to X$ be a diffeological vector pseudo-bundle such that the vector space dimension of each fibre $\pi^{-1}(x)$ is finite. A \textbf{pseudo-metric} on it is a smooth map $g:X\to V^*\otimes V^*$ such that 
for all $x\in X$ the bilinear form $g(x)\in(\pi^{-1}(x))^*\otimes(\pi^{-1}(x))^*$ is symmetric, positive semidefinite, and of rank equal to $\dim((\pi^{-1}(x))^*)$. 
\end{defn}

The reason why this definition is stated as it is, is that in general a finite-dimensional diffeological vector space does not admit a smooth scalar product (\cite{iglesiasBook}). The maximal rank of a smooth 
symmetric bilinear form on such a space is the dimension of its diffeological dual, and there is always a smooth symmetric positive semidefinite form that achieves that rank (\cite{dirac}, Section 5). The latter is 
called a \textbf{pseudo-metric} on the vector space in question, and the notion of a pseudo-metric on a pseudo-bundle is an obvious extension of that. Notice that not every finite-dimensional pseudo-bundle 
admits a pseudo-metric (see \cite{pseudobundle}).

If $\pi:V\to X$ is a finite-dimensional diffeological vector pseudo-bundle and $g$ is a pseudo-metric on it then there is an obvious \textbf{pairing map} 
$$\Phi_g:V\to V^*\,\,\mbox{ given by }\,\,\Phi_g(v)(\cdot)=g(\pi(v))(v,\cdot).$$ This map is a subduction onto $V^*$; it is bijective and a diffeomorphism if and only if the subset diffeology on all fibres of $V$ is the 
standard one, while in general it has a canonically defined right inverse which, however, is not guaranteed to be smooth. The latter is also the reason why the standard construction of the dual $g^*$ via the 
identity
$$g^*(x)(\Phi_g(v),\Phi_g(w))=g(x)(v,w)\,\,\mbox{ for all }\,\,x\in X,\,v,w\in V,$$ although it yields a well-defined family of pseudo-metrics on fibres of $V^*$, may not itself be a pseudo-metric.

\subsection{Diffeological gluing}

The \textbf{diffeological gluing} (\cite{pseudobundle}) is a procedure that mimics the usual topological gluing. Let $X_1$ and $X_2$ be two diffeological spaces, and let $f:X_1\supseteq Y\to X_2$ be a map 
smooth for the subset diffeology on $Y$. The result of the diffeological gluing of $X_1$ to $X_2$ along $f$ is the space
$$X_1\cup_f X_2:=(X_1\sqcup X_2)/_{\sim},\,\,\mbox{ where }x\sim x'\Leftrightarrow x=x'\mbox{ or }f(x)=f(x')$$ endowed with the quotient diffeology of the disjoint union diffeology on $X_1\sqcup X_2$. In 
practice, the plots of $X_1\cup_f X_2$ can be characterized as follows. 

Let us first define the \textbf{standard inductions} $i_1:X_1\setminus Y\to X_1\cup_f X_2$ and $i_2:X_2\to X_1\cup_f X_2$ given as the compositions
$$i_1:X_1\setminus Y\hookrightarrow X_1\sqcup X_2\to X_1\cup_f X_2,\,\,i_2:X_2\hookrightarrow X_1\sqcup X_2\to X_1\cup_f X_2$$ of the obvious inclusions with the quotient projection. Notice that the 
images $i_1(X_1\setminus Y)$ and $i_2(X_2)$ form a disjoint cover of $X_1\cup_f X_2$, a property that is used to describe maps from/into $X_1\cup_f X_2$. For instance, the plots of $X_1\cup_f X_2$ can 
be given the following characterization. A map $p:U\to X_1\cup_f X_2$ defined on a connected domain $U$ is a plot of $X_1\cup_f X_2$ if and only if one of the following is true: either there exists a plot 
$p_1$ of $X_1$ such that 
$$p(u)=\left\{\begin{array}{ll} p_1(u) & \mbox{if }u\in p_1^{-1}(X_1\setminus Y), \\ i_2(f(p_1(u))) & \mbox{if }u\in p_1^{-1}(Y), \end{array}\right.$$ or there exists a plot $p_2$ of $X_2$ such that 
$$p=i_2\circ p_2.$$

The right-hand factor $X_2$ always embeds into $X_1\cup_f X_2$, while $X_1$ in general does not, unless $f$ is a diffeomorphism (which is the case we will mostly treat). If it is one then the map
$$\tilde{i}_1:X_1\to X_1\cup_f X_2\,\,\mbox{ given by }\,\,\tilde{i}_1(x_1)=\left\{\begin{array}{ll} i_1(x_1) & \mbox{if }x_1\in X_1\setminus Y, \\ i_2(f(x_1)) & \mbox{if }x_1\in Y \end{array}\right.$$ is also an inclusion. 

Suppose now that we are given two pseudo-bundles $\pi_1:V_1\to X_1$ and $\pi_2:V_2\to X_2$, a gluing map $f:X_1\supseteq Y\to X_2$, and a smooth lift $\tilde{f}:\pi_1^{-1}(Y)\to V_2$ of $f$, that is linear 
on each fibre in its domain of definition. Then $\tilde{f}$ defines a gluing of $V_1$ to $V_2$ that preserves the pseudo-bundle structures, and specifically, we obtain in an obvious way a new pseudo-bundle 
denoted by
$$\pi_1\cup_{(\tilde{f},f)}\pi_2:V_1\cup_{\tilde{f}}V_2\to X_1\cup_f X_2.$$ The standard inductions $V_1\setminus\pi_1^{-1}(Y)\to V_1\cup_{\tilde{f}}V_2$ and $V_2\to V_1\cup_{\tilde{f}}V_2$ are denoted by 
$j_1$ and $j_2$ respectively. 

The gluing of pseudo-bundles is well-behaved with respect to the operations of direct sum and tensor product, while for dual pseudo-bundles its behavior is more complicated, unless both $\tilde{f}$ and $f$ 
are diffeomorphisms (see \cite{pseudobundle} and \cite{pseudobundle-pseudometrics}). Certain pairs of pseudo-metrics on $V_1$ and $V_2$ allow to obtain a pseudo-metric on $V_1\cup_{\tilde{f}}V_2$.

\begin{defn}
Let $\pi_1:V_1\to X_1$ and $\pi_2:V_2\to X_2$ be two finite-dimensional diffeological vector pseudo-bundles, let $(\tilde{f},f)$ be a gluing between them, and let $g_1$ and $g_2$ be pseudo-metrics on $V_1$ 
and $V_2$ respectively. The pseudo-metrics $g_1$ and $g_2$ are said to be \textbf{compatible} (with the gluing along $(\tilde{f},f)$) if for all $y\in Y$ and for all $v_1,w_1\in\pi_1^{-1}(y)$ we have 
$$g_1(y)(v_1,w_1)=g_2(f(y))(\tilde{f}(v_1),\tilde{f}(w_1)).$$ If $g_1$ and $g_2$ are compatible then the \textbf{induced pseudo-metric $\tilde{g}$ on $V_1\cup_{\tilde{f}}V_2$} is defined by
$$\tilde{g}(x)(v,w)=\left\{\begin{array}{ll} g_1(i_1^{-1}(x))(j_1^{-1}(v),j_1^{-1}(w)) & \mbox{if }x\in i_1(X_1\setminus Y), \\ 
g_2(i_2^{-1}(x))(j_2^{-1}(v),j_2^{-1}(w)) & \mbox{if }x\in i_2(X_2)\end{array}\right.$$ for all $x\in X_1\cup_f X_2$ and for all $v,w\in(\pi_1\cup_{(\tilde{f},f)}\pi_2)^{-1}(x)$. 
\end{defn}

See \cite{pseudobundle-pseudometrics} for details.

\subsection{Differential forms, diffeological connections, and Levi-Civita connections}

The notion of a diffeological differential form is a rather well-developed one by now, see \cite{iglesiasBook}; it is defined as a collection of usual differential forms satisfying a certain compatibility condition. 
Namely, let $X$ be a diffeological space, and let $\calD$ be its diffeology. A \textbf{diffeological differential $k$-form on $X$} is a collection $\omega=\{\omega(p)\}_{p\in\calD}$, where $p:U\to X$ with 
$U\subseteq\matR^n$ a domain and $\omega(p)\in C^{\infty}(U,\Lambda^k(\matR^n))$, such that for any ordinary smooth map $F:V\to U$ defined on another domain $V$ and with values in $U$ we have 
that $\omega(p\circ F)=F^*(\omega(p))$. The collection of all such forms for a fixed $k$, denoted by $\Omega^k(X)$, is a real vector space and is endowed with the diffeology given by the following condition:
a map $q:V\to\Omega^k(X)$ is a plot of $\Omega^k(X)$ is a plot of $\Omega^k(X)$ if and only if for every plot $p:\matR^n\subseteq U\to X$ the map 
$$V\times U\ni(v,u)\mapsto q(v)(p(u))\in\Lambda^k(\matR^n)$$ is smooth in the usual sense. 

A specific example of a diffeological differential form on $X$ is the \textbf{differential} of a smooth function $h:X\to\matR$, where $\matR$ is considered with the standard diffeology. The differential $dh$ is 
defined by setting
$$dh(p)=d(h\circ p)\,\,\mbox{ for every plot }p:U\to X,$$ where $d(h\circ p)$ is the usual differential of an ordinary smooth function $U\to\matR$. Checking that $dh$ is well-defined as an element of 
$\Omega^1(X)$ is trivial.

The definition of $\Omega^k(X)$ then extends to that of the \textbf{pseudo-bundle $\Lambda^k(X)$ of $k$-forms over $X$} (termed the \emph{bundle of values of $k$-forms} on $X$ in \cite{iglesiasBook}), 
in the following way. We first define, for every $x\in X$, the space $\Omega_x^k(X)$ of \textbf{$k$-forms on $X$ vanishing at $x$}. A form $\omega\in\Omega^k(X)$ \textbf{vanishes at $x$} if for every plot 
$p:U\to X$ of $X$ such that $U\ni 0$ and $p(0)=x$ we have that $\omega(p)(0)=0$, the zero form; the set of all such $k$-forms is the subspace $\Omega_x^k(X)$, which is indeed a vector subspace of 
$\Omega^k(X)$ and is endowed with the subset diffeology. Consider next the trivial pseudo-bundle $X\times\Omega^k(X)$ over $X$. The union $\bigcup_{x\in X}\{x\}\times\Omega_x^k(X)$ is a sub-bundle 
of $X\times\Omega^k(X)$ in the sense of diffeological vector pseudo-bundles, so the corresponding quotient pseudo-bundle is again a diffeological vector pseudo-bundle; $\Lambda^k(X)$ is precisely this 
pseudo-bundle:
$$\Lambda^k(X):=(X\times\Omega^k(X))/\left(\bigcup_{x\in X}\{x\}\times\Omega_x^k(X)\right).$$ The quotient projection is denoted by $\pi^{\Omega^k,\Lambda^k}$. 

In particular, if $k=1$ the pseudo-bundle $\Lambda^1(X)$ acts as a substitute of the usual cotangent bundle. Indeed, if $X$ is a smooth manifold considered as a diffeological space for the standard diffeology 
of a smooth manifold (see above), $\Lambda^1(X)$ coincides naturally with the cotangent bundle $T^1(X)$. Thus, a \textbf{diffeological connection on a pseudo-bundle $\pi:V\to X$} is defined as an operator 
$$\nabla:C^{\infty}(X,V)\to C^{\infty}(X,\Lambda^1(X)\otimes V),$$ satisfying then the usual properties of linearity and the Leibnitz rule.

\begin{defn}
Let $\pi:V\to X$ be a diffeological vector pseudo-bundle. A \textbf{diffeological connection on $V$} is a smooth linear operator
$$\nabla:C^{\infty}(X,V)\to C^{\infty}(X,\Lambda^1(X)\otimes V)$$ such that for all $h\in C^{X,\matR}$ and for all $s\in C^{\infty}(X,V)$ we have
$$\nabla(hs)=dh\otimes s+h\nabla s,$$ where $dh\in\Lambda^1(X)$ is defined by $dh(x)=\pi^{\Omega,\Lambda}(x,dh)$, where $dh$ on the right-hand side is the already-defined differential 
$dh\in\Omega^1(X)$.
\end{defn}

A particular instance of a diffeological connection is the \textbf{Levi-Civita connection} on $\Lambda^1(X)$ endowed with a pseudo-metric $g^{\Lambda}$. Two assumptions are implicit in this notion: that 
$X$ is such that $\Lambda^1(X)$ has finite-dimensional fibres, and that $\Lambda^1(X)$ admits a pseudo-metric. If it does then the following definition (\cite{connectionsLC}) is well-posed.

\begin{defn}
Let $X$ be a diffeological space such that $\Lambda^1(X)$ admits pseudo-metrics, and let $g^{\Lambda}$ be a pseudo-metric on $\Lambda^1(X)$. A \textbf{Levi-Civita connection} on 
$(\Lambda^1(X),g^{\Lambda})$ is a connection $\nabla$ on $\Lambda^1(X)$ which satisfies the usual two conditions. Specifically, $\nabla$ is \textbf{compatible with the pseudo-metric $g^{\Lambda}$}, that is, 
for any two sections $s,t\in C^{\infty}(X,\Lambda^1(X))$
$$d(g^{\Lambda}(s,t))=g^{\Lambda}(\nabla s,t)+g^{\Lambda}(s,\nabla t),$$ where on the left we have the differential of $g^{\Lambda}(s,t)\in C^{\infty}(X,\matR)$ that is an element of $C^{\infty}(X,\Lambda^1(X))$ 
and $g^{\Lambda}$ is extended to sections of $\Lambda^1(X)\otimes\Lambda^1(X)$ by setting $g^{\Lambda}(\alpha\otimes s,t):=\alpha\cdot g^{\Lambda}(s,t)=g^{\Lambda}(s,\alpha\otimes t)$ for any 
$\alpha\in C^{\infty}(X,\Lambda^1(X))$. Second, $\nabla$ is \textbf{symmetric}, that is, for any $s,t\in C^{\infty}(X,\Lambda^1(X))$ we have 
$$\nabla_s t-\nabla_t s=[s,t],$$ where $\nabla_s t$ is the \textbf{covariant derivative of $t$ along $s$} and $[s,t]$ is the \textbf{Lie bracket} of $s$ and $t$, both of which are defined via the pairing map 
$\Phi_{g^{\Lambda}}$ corresponding to the pseudo-metric $g^{\Lambda}$. 
\end{defn}

The (very few, this is a straightforward extension of the standard notion) details concerning the definitions of covariant derivatives and the Lie bracket can be found in \cite{dirac}, Sections 10.2 and 11.1. 
It is not quite clear when $X$ admits a Levi-Civita connection, but if it does, it is unique.

The pseudo-bundles of differential forms are rather well-behaved with respect to the gluing, provided that certain \emph{extendibility conditions} are satisfied (see \cite{dirac}, Section 8.1, for the case of $k=1$), 
and as a consequence, the same is true for diffeological connections and the Levi-Civita connections. Specifically, given two connections $\nabla^1$ and $\nabla^2$ on pseudo-bundles $\pi_1:V_1\to X_1$ 
and $\pi_2:V_2\to X_2$ yield a well-defined connection on $\pi_1\cup_{(\tilde{f},f)}\pi_2:V_1\cup_{\tilde{f}}V_2\to X_1\cup_f X_2$, as long as $\nabla^1$ and $\nabla^2$ satisfy a certain compatibility 
condition with respect to the gluing along $(\tilde{f},f)$, and $\Lambda^1(X_1)$ and $\Lambda^1(X_2)$ satisfy (one of) the already-mentioned extendibility conditions relative to $f$. Furthermore, if 
$\Lambda^1(X_1)$ and $\Lambda^1(X_2)$ satisfy the extendibility condition and are endowed each with a connection then under a certain additional condition (this is also called a compatibility condition, 
but it is a different one from that in the case of $V_1\cup_{\tilde{f}}V_2$, see \cite{dirac}, Section 11.4.1; compare with \cite{dirac}, Section 10.3.1) two connections on $\Lambda^1(X_1)$ and $\Lambda^1(X_2)$ 
yield a well-defined connection $\nabla^{\Lambda}$ on $\Lambda^1(X_1\cup_f X_2)$. Moreover, if $\Lambda^1(X_1)$ and $\Lambda^1(X_2)$ are endowed with pseudo-metrics $g_1^{\Lambda}$ and 
$g_2^{\Lambda}$ well-behaved (see \cite{dirac}, Section 8.4.2, for definition) with respect to $f$, and the initial connections on them are the Levi-Civita connections then $\nabla^{\Lambda}$ is the 
Levi-Civita connection on $\Lambda^1(X_1\cup_f X_2)$ endowed with a certain induced pseudo-metric $g^{\Lambda}$ (\cite{dirac}, Section 8.4.3).

\subsection{Pseudo-bundles of Clifford modules, diffeological Clifford connections, and Dirac operators}

As we have mentioned already, the operations of direct sums, tensor products, and quotienting are defined also for diffeological vector pseudo-bundles; this in particular allows to obtain a well-defined 
pseudo-bundle $\pi^{\cl}:\cl(V,g)\to X$ of Clifford algebras starting from a given pseudo-bundle $\pi:V\to X$ endowed with a pseudo-metric $g$. Each fibre of $\cl(V,g)$ is the Clifford algebra $\cl(\pi^{-1}(x),g(x))$. 
It then makes sense to speak of another pseudo-bundle $\chi:E\to X$ over the same $X$ being a \textbf{pseudo-bundle of Clifford modules} over $\cl(V,g)$, in the sense that each fibre $\chi^{-1}(x)$ is a 
Clifford module over $\cl(\pi^{-1}(x),g(x))$ with some Clifford action $c(x)$. For $E$ to be a pseudo-bundle of Clifford modules, it suffices to add the requirement that the total action $c$ be smooth. This condition 
of smoothness can be stated as follows: for every plot $q:U'\to\cl(V,g)$ and for every plot $p:U\to E$ the map
$$U'\times U\supseteq\{(u',u)\,|\,\pi^{cl}(q(u'))=\chi(p(u))\}\mapsto c(q(u'))(p(u))\in E$$ is smooth for the subset diffeology on its domain of definition. 

Given then a diffeological space $X$ such that $\Lambda^1(X)$ admits a pseudo-metric $g^{\Lambda}$ such that there exists the Levi-Civita connection $\nabla^{\Lambda}$ on $(\Lambda^1(X),g^{\Lambda})$, 
and given a pseudo-bundle of Clifford modules $\chi:E\to X$ over $\cl(\Lambda^1(X),g^{\Lambda})$ with Clifford action $c$, the notion of a Clifford connection on $E$ is well-defined (although its existence 
is not guaranteed).

\begin{defn}
A connection $\nabla$ on $E$ is a \textbf{Clifford connection} if for every $s,t\in C^{\infty}(X,\Lambda^1(X))$ and for every $r\in C^{\infty}(X,E)$ we have 
$$\nabla_t(c(s)r)=c(\nabla^{\Lambda}_t s)(r)+c(s)(\nabla_t r).$$
\end{defn}

This is quite the same as the standard notion, just using the diffeological counterparts of all components. Then the composition $c\circ\nabla$ of a given Clifford action with the given Clifford connection is, 
as usual, a \textbf{Dirac operator} on $E$.

\begin{defn}
Let $X$ be a diffeological space such that $\Lambda^1(X)$ admits a pseudo-metric $g^{\Lambda}$ and there exists a Levi-Civita connection on $(\Lambda^1(X),g^{\Lambda})$. Let $\chi:E\to X$ be a 
pseudo-bundle of Clifford modules over $\cl(\Lambda^1(X),g^{\Lambda})$ with Clifford action $c$, and let $\nabla$ be a Clifford connection on $E$. Associated to the data $(X,g^{\Lambda},E,c,\nabla)$ is the 
\textbf{Dirac operator} $D:C^{\infty}(X,E)\to C^{\infty}(X,E)$ given by $D=c\circ\nabla$.
\end{defn}

All these constructions are well-behaved with respect to gluing, provided that all gluing maps are diffeological diffeomorphisms, and that certain compatibility and extendibility conditions are met. Specifically, 
given two pseudo-bundles $\chi_1:E_1\to X_1$ and $\chi_2:E_2\to X_2$ of Clifford modules over $(\Lambda^1(X_1),g_1^{\Lambda})$ and $(\Lambda^1(X_2),g_2^{\Lambda})$ with Clifford actions $c_1$ 
and $c_2$, that are endowed with Clifford connections $\nabla^1$ (on $E_1$) and $\nabla^2$ (on $E_2$), and given a gluing of $E_1$ to $E_2$, along a pair of diffeomorphisms $f:X_1\supseteq Y\to X_2$ 
and $\tilde{f}':E_1\supseteq\chi_1^{-1}(Y)\to E_2$, we need the following conditions for there being a well-defined Dirac operator on the result of gluing:
\begin{enumerate}
\item The map $f$ is such that the following two diffeologies on $\Omega^1(Y)$ coincide: the pushforward $\calD_1^{\Omega}$ of the standard diffeology on $\Omega^1(X_1)$ by the pullback map 
$i^*:\Omega^1(X_1)\to\Omega^1(Y)$, where $i:Y\hookrightarrow X_1$ is the natural inclusion, and the pushforward $\calD_2^{\Omega}$ of the standard diffeology on $\Omega^1(X_2)$ by the pullback map 
$j^*:\Omega^1(X_2)\to\Omega^1(f(Y))$, where $j:f(Y)\hookrightarrow X_2$ is also the natural inclusion. The equality $\calD_1^{\Omega}=\calD_2^{\Omega}$ is what we previously called the extendibility 
condition, and it ensures that $\Lambda^1(X_1\cup_f X_2)$ admits a particularly simple description in terms of $\Lambda^1(X_1)$ and $\Lambda^1(X_2)$ (it is possible to give a description without the 
extendibility condition, but it is far more cumbersome). See \cite{dirac}, Section 8, for details;
\item The pseudo-metrics $g_1^{\Lambda}$ and $g_2^{\Lambda}$ are \textbf{compatible} with the gluing along $f$, that is, for every $y\in Y$ and for every pair $\alpha_1\in\Lambda_y^1(X_1)$, 
$\alpha_2\in\Lambda_{f(y)}^1(X_2)$ such that $i_{\Lambda}^*(\alpha_1)=(f_{\Lambda}^*j_{\Lambda}^*)(\alpha_2)$ (we say that $\alpha_1$ and $\alpha_2$ are \textbf{compatible}), where 
$i_{\Lambda}^*:\Lambda^1(X_1)\supseteq(\pi_1^{\Lambda})^{-1}(Y)\to\Lambda^1(Y)$, $f_{\Lambda}^*:\Lambda^1(f(Y))\to\Lambda^1(Y)$, 
$j_{\Lambda}^*:\Lambda^1(X_2)\supseteq(\pi_2^{\Lambda})^{-1}(f(Y))\to\Lambda^1(f(Y))$ are induced by the pullback maps $i^*$, $f^*$, and $j^*$, we have that 
$$g_1^{\Lambda}(y)(\alpha_1,\alpha_1)=g_2^{\Lambda}(f(y))(\alpha_2,\alpha_2);$$
\item The actions $c_1$ and $c_2$ are compatible with $(\tilde{f}',f)$, specifically, for every $y\in Y$, for every compatible pair $\alpha_1\in\Lambda_y^1(X_1)$, $\alpha_2\in\Lambda_{f(y)}^1(X_2)$, and for 
every $e_1\in\chi_1^{-1}(y)$ we have that
$$c_2(\alpha_2)(\tilde{f}'(e_1))=\tilde{f}'(c_1(\alpha_1)(e_1));$$
\item The pseudo-bundles $(\Lambda^1(X_1),g_1^{\Lambda})$ and $(\Lambda^1(X_2),g_2^{\Lambda})$ admit Levi-Civita connections $\nabla^{\Lambda,1}$ and $\nabla^{\Lambda,2}$, and these 
connections are \textbf{compatible} in the following sense: for all $t_1\in C^{\infty}(X_1,\Lambda^1(X_1))$ and $t_2\in C^{\infty}(X_2,\Lambda^1(X_2))$ such that for all $\in Y$ we have that 
$i_{\Lambda}^*(s_1(y))=(f_{\Lambda}^*j_{\Lambda}^*)(s_2(f(y)))$ (these are \textbf{compatible} sections of $\Lambda^1(X_1)$ and $\Lambda^1(X_2)$), the following equality holds at every point $yin Y$:
$$(i_{\Lambda}^*\otimes i_{\Lambda}^*)((\nabla^{\Lambda,1}s_1)(y))=((f_{\Lambda}^*j_{\Lambda}^*)\otimes(f_{\Lambda}^*j_{\Lambda}^*))((\nabla^{\Lambda,2}s_2)(f(y)));$$
\item The connections $\nabla^1$ and $\nabla^2$ are \textbf{compatible} with the gluing along $(\tilde{f}',f)$, which means the following: for every pair $r_1\in C^{\infty}(X_1,E_1)$, $r_2\in C^{\infty}(X_2,E_2)$ 
such that for every $y\in Y$ we have $\tilde{f}'(s_1(y))=s_2(f(y))$, and for all $y\in Y$ there is the equality
$$(i_{\Lambda}^*\otimes\tilde{f}')((\nabla^1s_1)(y))=((f_{\Lambda}^*j_{\Lambda}^*)\otimes\mbox{Id}_{E_2})((\nabla^2s_2)(f(y))).$$
\end{enumerate}

The conditions just listed provide us with the following:
\begin{enumerate}
\item Conditions 1 and 2 yield an \textbf{induced pseudo-metric} $g^{\Lambda}$ on $\Lambda^1(X_1\cup_f X_2)$;
\item Condition 3 yields the Levi-Civita connection $\nabla^{\Lambda}$ on $(\Lambda^1(X_1\cup_f X_2),g^{\Lambda})$;
\item Condition 4 provides an \textbf{induced Clifford action} $\tilde{c}$ of $\cl(\Lambda^1(X_1\cup_f X_2),g^{\Lambda})$ on $E_1\cup_{\tilde{f}'}E_2$;
\item Condition 5 ensures that there is the induced connection $\nabla^{\cup}$ on $E_1\cup_{\tilde{f}'}E_2$, and that it is a Clifford connection.
\end{enumerate}

\begin{prop} \emph{(\cite{dirac}, Proposition 13.3)}
Let $D_1$ and $D_2$ be the Dirac operators associated to the data $(X_1,g_1^{\Lambda},E_1,c_1,\nabla^1)$ and $(X_2,g_2^{\Lambda},E_2,c_2,\nabla^2)$ respectively, and suppose that these data and 
the gluing pair $(\tilde{f}',f)$ satisfy Conditions 1-5 above. The the Dirac operator $D$ associated to the data $(X_1\cup_f X_2,g^{\Lambda},E_1\cup_{\tilde{f}'}E_2,\tilde{c},\nabla^{\cup})$ satisfies the following: 
for every $s\in C^{\infty}(X_1\cup_f X_2,E_1\cup_{\tilde{f}'}E_2)$ we have that 
$$D(s)=D_1(s_1)\cup_{(f,\tilde{f}')}D_2(s_2),$$ where $s_1:=\tilde{j}_1^{-1}\circ s\circ\tilde{i}_1\in C^{\infty}(X_1,E_1)$ and $s_2:=j_2^{-1}\circ s\circ i_2\in C^{\infty}(X_2,E_2)$. 
\end{prop}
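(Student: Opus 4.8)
The plan is to reduce the claimed identity to a pointwise verification over the disjoint cover $X_1\cup_f X_2=i_1(X_1\setminus Y)\sqcup i_2(X_2)$, using that $\nabla^{\cup}$ and $\tilde{c}$ are by construction the data induced from $(\nabla^1,\nabla^2)$ and $(c_1,c_2)$. First I would recall that a smooth section $s$ of $E_1\cup_{\tilde{f}'}E_2$ is the same datum as a pair $s_1=\tilde{j}_1^{-1}\circ s\circ\tilde{i}_1\in C^{\infty}(X_1,E_1)$ and $s_2=j_2^{-1}\circ s\circ i_2\in C^{\infty}(X_2,E_2)$ subject to the overlap condition $\tilde{f}'(s_1(y))=s_2(f(y))$ for all $y\in Y$, and that conversely any such compatible pair assembles into one section $s$. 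This is precisely the meaning of the right-hand side $D_1(s_1)\cup_{(f,\tilde{f}')}D_2(s_2)$, so the proposition splits into showing (a) that $D_1(s_1)$ and $D_2(s_2)$ again form a compatible pair, and hence glue, and (b) that the section so obtained coincides with $D(s)$.

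Second, I would invoke the explicit local descriptions of the induced objects furnished by Conditions 1--5 (and the cited sections of \cite{dirac}): over $i_1(X_1\setminus Y)$ the glued pseudo-bundle, the connection $\nabla^{\cup}$, and the action $\tilde{c}$ pull back through $\tilde{i}_1$ and $j_1$ to $E_1$, $\nabla^1$, and $c_1$, while over $i_2(X_2)$ they pull back through $i_2$ and $j_2$ to $E_2$, $\nabla^2$, and $c_2$; the extendibility Condition 1 supplies the corresponding description of $\Lambda^1(X_1\cup_f X_2)$ on each piece. Granting this, the computation of $D(s)=\tilde{c}(\nabla^{\cup}s)$ is immediate on the interior of each piece of the cover: for $x=i_1(x_1)$ with $x_1\in X_1\setminus Y$ one reads off $D(s)(x)=j_1(c_1((\nabla^1 s_1)(x_1)))=j_1(D_1(s_1)(x_1))$, and for $x=i_2(x_2)$ one reads off $D(s)(x)=j_2(D_2(s_2)(x_2))$. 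This settles part (b) away from the gluing locus and shows that $D(s)$ restricts on each side to the expected summand.

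The crux, and the step I expect to be the main obstacle, is the gluing locus: the overlap points $x=i_2(f(y))=\tilde{i}_1(y)$ for $y\in Y$, where part (a) must be established, that is, $\tilde{f}'(D_1(s_1)(y))=D_2(s_2)(f(y))$. The difficulty is that $D=\tilde{c}\circ\nabla^{\cup}$ contracts the $\Lambda^1$-factor of $\nabla^{\cup}s$ against its $E$-factor, so the compatibility conditions for the two factors must be combined coherently rather than used separately. Writing $(\nabla^1 s_1)(y)=\sum_i\alpha_1^{(i)}\otimes e_1^{(i)}$ and $(\nabla^2 s_2)(f(y))=\sum_j\alpha_2^{(j)}\otimes e_2^{(j)}$, Condition 5 gives the equality
$$\sum_i i_{\Lambda}^*(\alpha_1^{(i)})\otimes\tilde{f}'(e_1^{(i)})=\sum_j(f_{\Lambda}^*j_{\Lambda}^*)(\alpha_2^{(j)})\otimes e_2^{(j)}$$
in $\Lambda^1(Y)\otimes(E_2)_{f(y)}$, which exhibits the transported $\Lambda^1$-components of the two sides as compatible pairs after pullback to $Y$; Condition 3 then turns each such compatible pair into the Clifford identity $c_2(\alpha_2)(\tilde{f}'(e_1))=\tilde{f}'(c_1(\alpha_1)(e_1))$. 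Applying $\tilde{f}'$ to $D_1(s_1)(y)=\sum_i c_1(\alpha_1^{(i)})(e_1^{(i)})$ and substituting, the sum collapses to $\sum_j c_2(\alpha_2^{(j)})(e_2^{(j)})=D_2(s_2)(f(y))$, which is exactly the overlap condition. The delicate point to treat with care is that the Clifford action depends on the genuine $\Lambda^1(X_2)$-element and not merely on its pullback to $Y$; this is what the well-definedness of the induced action $\tilde{c}$ (guaranteed by the compatibility of $c_1$ and $c_2$, Condition 3, together with the extendibility Condition 1) and of the induced connection $\nabla^{\cup}$ (Condition 5) resolves, and it is their well-definedness as global objects on $X_1\cup_f X_2$ that ultimately forces the two one-sided computations to agree at the overlap. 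Once part (a) holds, part (b) extends across the gluing locus and the identity $D(s)=D_1(s_1)\cup_{(f,\tilde{f}')}D_2(s_2)$ follows.
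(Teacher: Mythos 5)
You should note first that the paper itself gives no proof of this statement: it is imported verbatim from \cite{dirac} (Proposition 13.3) as background material, so there is no internal proof to compare against and your attempt has to stand on its own. Its architecture is the natural one, and surely the intended one: identify $s$ with the compatible pair $(s_1,s_2)$, read off the identity over $i_1(X_1\setminus Y)$ and over $i_2(X_2\setminus f(Y))$ from the piecewise descriptions of $\nabla^{\cup}$ and $\tilde{c}$, and reduce everything to the overlap compatibility $\tilde{f}'(D_1(s_1)(y))=D_2(s_2)(f(y))$ for $y\in Y$. Away from the gluing locus your argument is fine.

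At the gluing locus, however, the computation you actually perform has a genuine gap. Condition 5 gives you exactly one equality of tensors,
$$\sum_i i_{\Lambda}^*(\alpha_1^{(i)})\otimes\tilde{f}'(e_1^{(i)})=\sum_j(f_{\Lambda}^*j_{\Lambda}^*)(\alpha_2^{(j)})\otimes e_2^{(j)},$$
and nothing more: there is no term-by-term correspondence between the two decompositions, and since $i_{\Lambda}^*$ and $f_{\Lambda}^*j_{\Lambda}^*$ may have nontrivial kernels, equality of the pulled-back tensors does not by itself determine the contractions of the original ones; moreover Condition 3 applies only to a compatible pair of \emph{single} covectors, not to sums. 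Hence the step ``substituting, the sum collapses'' is an assertion, not a deduction. The missing bridge --- which is also what turns your closing appeal to the well-definedness of $\nabla^{\cup}$ and $\tilde{c}$ into an actual proof --- is a small piece of linear algebra. Set $A=\Lambda_y^1(X_1)$, $B=\Lambda_{f(y)}^1(X_2)$, $C=\Lambda_y^1(Y)$, $W=\chi_2^{-1}(f(y))$, $\rho_A=i_{\Lambda}^*$, $\rho_B=f_{\Lambda}^*j_{\Lambda}^*$. The fibre of $\Lambda^1(X_1\cup_f X_2)$ at the glued point is the fibred product $S=\{(\alpha_1,\alpha_2)\,|\,\rho_A(\alpha_1)=\rho_B(\alpha_2)\}$, and since tensoring with $W$ over $\matR$ is exact, $S\otimes W$ is exactly the kernel of $(\rho_A\otimes\mathrm{Id}_W)-(\rho_B\otimes\mathrm{Id}_W)$ on $(A\otimes W)\oplus(B\otimes W)$. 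Condition 5 says precisely that the pair $\bigl((\mathrm{Id}_A\otimes\tilde{f}')((\nabla^1s_1)(y)),\,(\nabla^2s_2)(f(y))\bigr)$ lies in that kernel; therefore it can be \emph{re-decomposed} as $\sum_k(\alpha_1^{(k)},\alpha_2^{(k)})\otimes w_k$ with every individual pair $(\alpha_1^{(k)},\alpha_2^{(k)})$ compatible, and this element is by construction the value of $\nabla^{\cup}s$ at the glued point. Now apply Condition 3 to each index $k$ and use that the contractions $\alpha\otimes e\mapsto c_1(\alpha)(e)$ and $\beta\otimes w\mapsto c_2(\beta)(w)$ are linear: contracting the projection of this element to $B\otimes W$ gives $D_2(s_2)(f(y))$, contracting its projection to $A\otimes W$ gives $\tilde{f}'(D_1(s_1)(y))$, and both equal $\tilde{c}\bigl((\nabla^{\cup}s)(x)\bigr)$. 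That yields the overlap identity; with this step inserted your proof is complete, but without it the crucial equality is not established.
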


The map $\tilde{j}_1$ above is the natural inclusion $E_1\hookrightarrow E_1\cup_{\tilde{f}'}E_2$, the analogue of the inclusion $\tilde{i}_1$ (recall that also $\tilde{f}'$ is assumed to be a diffeomorphism), and 
the sign $\cup_{(f,\tilde{f}')}$ refers to the gluing of the maps $D_1(s_1):X_1\to E_1$ and $D_2(s_2):X_2\to E_2$ along $(f,\tilde{f}')$; see \cite{dirac}, Section 6.3, for details.

\subsection{Diffeological De Rham cohomology}

There is an established notion of the De Rham cohomology for diffeological spaces; a complete exposition can be found in \cite{iglesiasBook}, Section 6.73. The construction mimics the standard one and 
is as follows. Let $X$ be a diffeological space. The already-defined differential of a smooth function $X\to\matR$ provides us with the \textbf{coboundary operator}
$$d:\Omega^k(X)\to\Omega^{k+1}(X)\,\,\mbox{ for }\,k\geqslant 0,$$ defined by $d\omega(p)=d(\omega(p))$ for any plot $p$ of $X$. This is well-defined and satisfies the coboundary condition $d\circ d=0$, 
see \cite{iglesiasBook}. Define, as usual, the space of \textbf{$k$-cocycles} to be 
$$Z_{dR}^k(X):=\mbox{ker}(d:\Omega^k(X)\to\Omega^{k+1}(X)),$$ and let
$$B_{dR}^k(X):= d(\Omega^{k-1}(X))\subseteq Z_{dR}^k(X)\,\,\mbox{ for }k\geqslant 1,\,\,\mbox{ and }\,B_{dR}^0(X)=\{0\}$$ be the space of \textbf{$k$-coboundaries}. In particular, every $Z_{dR}^k(X)$ is 
equipped with the subset diffeology relative to the standard diffeology on the corresponding $\Omega^k(X)$.

The \textbf{de Rham cohomology groups} are then defined as quotients
$$H_{dR}^k(X):=Z_{dR}^k(X)/B_{dR}^k(X).$$ They are equipped with the quotient diffeology, with respect to which they become diffeological vector spaces.

\section{The pseudo-bundles $\Lambda^k(X_1\cup_f X_2)$, and the groups $H_{dR}^k(X_1\cup_f X_2)$}

In this section we consider the behavior of $\Lambda^k(X)$ and $H_{dR}^k(X)$ under gluing. The common prerequisite for considering this is to describe first the behavior of the spaces $\Omega^k(X)$ 
with respect to gluing (as has already been done for $k=1$, \cite{dirac}, Section 8).

\subsection{The vector spaces $\Omega^k(X_1\cup_f X_2)$} 

As in the case of $k=1$, the spaces $\Omega^k(X_1\cup_f X_2)$ are subspaces of the direct sum $\Omega^k(X_1)\oplus\Omega^k(X_2)$. They can be described as the images of the pullback map 
$$\pi^*:\Omega^k(X_1\cup_f X_2)\to\Omega^k(X_1\sqcup X_2)\cong\Omega^k(X_1)\oplus\Omega^k(X_2),$$ where $\pi:X_1\sqcup X_2\to X_1\cup_f X_2$ is the quotient projection that defines 
$X_1\cup_f X_2$, and also given an explicit description in terms of an appropriate compatibility notion. Doing so does not require any additional assumptions on $f$, which appear when we want to establish 
the surjectivity of the images of the direct sum projections $\pi_1^{\Omega}:\Omega^k(X_1\cup_f X_2)\to\Omega^k(X_1)$ and $\pi_2^{\Omega}:\Omega^k(X_1\cup_f X_2)\to\Omega^k(X_2)$.

\subsubsection{The diffeomorphism $\Omega^k(X_1\sqcup X_2)\cong\Omega^k(X_1)\oplus\Omega^k(X_2)$}

The existence (and the construction) of this diffeomorphism is essentially obvious from the definitions. Let $\hat{i}_1:X_1\hookrightarrow X_1\sqcup X_2$ and $\hat{i}_2:X_2\hookrightarrow X_1\sqcup X_2$ 
be the obvious inclusions.

\begin{thm}\label{omega:of:disjoint:union:splits:as:direct:sum:thm}
For any two diffeological spaces $X_1$ and $X_2$ and for any $k\geqslant 0$ the map 
$$\hat{i}_1^*\oplus\hat{i}_2^*:\Omega^k(X_1\sqcup X_2)\to\Omega^k(X_1)\oplus\Omega^k(X_2)$$ acting by $(\hat{i}_1^*\oplus\hat{i}_2^*)(\omega)=\hat{i}_1^*(\omega)\oplus\hat{i}_2^*(\omega)$ is a 
linear diffeomorphism.
\end{thm}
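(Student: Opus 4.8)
The plan is to exploit the defining feature of the disjoint-union diffeology: a plot $p\colon U\to X_1\sqcup X_2$ is, locally on $U$, of the form $\hat{i}_1\circ p_1$ for a plot $p_1$ of $X_1$ or $\hat{i}_2\circ p_2$ for a plot $p_2$ of $X_2$; consequently, if the domain $U$ is connected then $p$ factors globally through exactly one of the two inclusions, and the factorizing plot is unique because $\hat{i}_1$ and $\hat{i}_2$ are injective with disjoint images. Granting this single fact, linearity of $\hat{i}_1^*\oplus\hat{i}_2^*$ is immediate (pullback of forms is linear), and that it lands in $\Omega^k(X_1)\oplus\Omega^k(X_2)$ is just the functoriality of the pullback of diffeological forms along the smooth maps $\hat{i}_1,\hat{i}_2$, so the content is injectivity, surjectivity, and smoothness in both directions.

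For injectivity I would use that a form is determined by its values $\omega(p)$ on plots with connected domain (by the sheaf condition one restricts to connected components of any domain). If $\hat{i}_1^*(\omega)=0$ and $\hat{i}_2^*(\omega)=0$, then for such a $p$, say $p=\hat{i}_1\circ p_1$, we get $\omega(p)=(\hat{i}_1^*\omega)(p_1)=0$, whence $\omega=0$. For surjectivity, given $\omega_1\in\Omega^k(X_1)$ and $\omega_2\in\Omega^k(X_2)$, I would define a form $\omega$ by setting, on a plot $p$ with connected domain, $\omega(p):=\omega_1(p_1)$ if $p=\hat{i}_1\circ p_1$ and $\omega(p):=\omega_2(p_2)$ if $p=\hat{i}_2\circ p_2$, extending to arbitrary domains componentwise via the sheaf condition. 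Uniqueness of the factorization makes $\omega$ unambiguous, and by construction $\hat{i}_1^*(\omega)=\omega_1$, $\hat{i}_2^*(\omega)=\omega_2$.

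It remains to check that both $\hat{i}_1^*\oplus\hat{i}_2^*$ and its inverse are smooth for the functional diffeologies, which I would verify directly from the characterization of plots of $\Omega^k$ recalled above. If $q$ is a plot of $\Omega^k(X_1\sqcup X_2)$, then for every plot $p_1$ of $X_1$ the map $(v,u)\mapsto(\hat{i}_1^*q(v))(p_1(u))=q(v)(\hat{i}_1(p_1(u)))$ is smooth, since $\hat{i}_1\circ p_1$ is a plot of $X_1\sqcup X_2$, and symmetrically for $X_2$; hence the map is smooth into the product. Conversely, if $(q_1,q_2)$ is a plot of $\Omega^k(X_1)\oplus\Omega^k(X_2)$ and $\omega_v$ denotes the corresponding preimage form, then for any plot $p$ of $X_1\sqcup X_2$ smoothness of $(v,u)\mapsto\omega_v(p(u))$ is a local question in $u$; near each point $p$ factors through one component, say $p=\hat{i}_1\circ p_1$, and there $\omega_v(p(u))=q_1(v)(p_1(u))$, which is smooth because $q_1$ is a plot, so the inverse is smooth as well.

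The main obstacle is really the one structural lemma that connected plots of $X_1\sqcup X_2$ factor through a single summand; once it is in hand, injectivity, surjectivity, and smoothness of the inverse all follow by routine use of the sheaf condition and the functional-diffeology criterion. The one point demanding a little care is checking that the assignment $p\mapsto\omega(p)$ built in the surjectivity step genuinely satisfies the naturality condition $\omega(p\circ F)=F^*(\omega(p))$ across the boundary between the two cases; but if $p=\hat{i}_1\circ p_1$ on a connected domain and $F$ is smooth, then $p\circ F=\hat{i}_1\circ(p_1\circ F)$ stays in the same summand, so the condition reduces to the one already known for $\omega_1$ (and symmetrically for $\omega_2$), and the general case follows by passing to connected components of the new domain.
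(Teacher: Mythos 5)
Your proof is correct and takes essentially the same approach as the paper's: both rest on the fact that plots of $X_1\sqcup X_2$ split according to the two summands (the paper decomposes an arbitrary plot domain as $U=U_1\sqcup U_2$, you reduce to connected domains, where the plot factors through a single inclusion), and both construct the inverse of $\hat{i}_1^*\oplus\hat{i}_2^*$ explicitly from a pair $\omega_1\oplus\omega_2$ via this splitting. The only difference is one of detail: you spell out injectivity, surjectivity, the naturality check for the glued form, and smoothness in both directions, where the paper's proof declares these points immediate from the construction.
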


\begin{proof}
It suffices to show that $\hat{i}_1^*\oplus\hat{i}_2^*$ has a smooth linear inverse. This inverse is given by assigning to each pair $\omega_1\oplus\omega_2$, where $\omega_1\in\Omega^k(X_1)$ and 
$\omega_2\in\Omega^k(X_2)$, the form $\omega$ that is defined as follows. Let $p:U\to X_1\sqcup X_2$ be a plot; then there exists a decomposition $U=U_1\sqcup U_2$ of the domain $U$ as a disjoint 
union of two domains $U_1$ and $U_2$ such that $p_1:=\hat{i}_1^{-1}\circ p|_{U_1}$ and $p_2:=\hat{i}_2^{-1}\circ p|_{U_2}$. We define 
$$\omega(p)=(\omega_1(p_1),\omega_2(p_2)),$$ the latter pair being naturally seen as a usual differential $k$-form on the disjoint union $U_1\sqcup U_2=U$. That such assignment defines the inverse 
of $\hat{i}_1^*\oplus\hat{i}_2^*$, and that this inverse is smooth and linear, is immediate from the construction.
\end{proof}

\subsubsection{The subspace $\Omega_f^k(X_1)$ of $f$-invariant $k$-forms}

Let $f:X_1\supseteq Y\to X_2$. In general, the $k$-forms on $X_1$ which can be carried forward to the glued space $X_1\cup_f X_2$ must satisfy a certain additional condition.

\begin{defn}
Two plots $p_1:U\to X_1$ and $p_1':U'\to X_1$ are called \textbf{$f$-equivalent} if $U=U'$, and for every $u\in U$ such that $p_1(u)\neq p_1'(u)$ we have that $p_1(u),p_1'(u)\in Y$ and $f(p_1(u))=f(p_1'(u))$. 
A $k$-form $\omega_1\in\Omega^k(X_1)$ is said to be \textbf{$f$-invariant} if for any two $f$-equivalent plots $p_1$ and $p_1'$ of $X_1$ we have that 
$$\omega_1(p_1)=\omega_1(p_1').$$ The set of all $f$-invariant $k$-forms on $X_1$ is denoted by $\Omega_f^k(X_1)$.
\end{defn}

It is trivial to establish the following statement (whose proof we therefore omit).

\begin{lemma}
For every diffeological space $X_1$ and for every smooth map $f$ defined on a subset of $X_1$ the set $\Omega_f^k(X_1)$ is a vector subspace of $\Omega^k(X_1)$. 
\end{lemma}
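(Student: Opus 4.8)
The plan is to verify the two vector-space axioms directly: that $\Omega_f^k(X_1)$ is closed under addition and under scalar multiplication, and that it contains the zero form. Since $\Omega_f^k(X_1)$ is defined as the set of $f$-invariant forms inside the ambient vector space $\Omega^k(X_1)$, it suffices to show that the defining condition of $f$-invariance is preserved by these operations, and the whole argument reduces to observing that the condition $\omega_1(p_1) = \omega_1(p_1')$, imposed for every pair of $f$-equivalent plots $p_1, p_1'$, is a \emph{linear} condition on $\omega_1$.

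First I would recall that the value assignment $\omega_1 \mapsto \omega_1(p)$ is itself linear in $\omega_1$ for each fixed plot $p$: this is immediate from the definition of the vector space structure on $\Omega^k(X_1)$, where addition and scalar multiplication of forms are defined plotwise, so that $(\omega_1 + \omega_1')(p) = \omega_1(p) + \omega_1'(p)$ and $(\lambda\omega_1)(p) = \lambda\,\omega_1(p)$ as ordinary differential forms on the domain of $p$. Granting this, fix any pair of $f$-equivalent plots $p_1, p_1'$ of $X_1$. If $\omega_1, \omega_1' \in \Omega_f^k(X_1)$, then $\omega_1(p_1) = \omega_1(p_1')$ and $\omega_1'(p_1) = \omega_1'(p_1')$; adding these equalities and using plotwise linearity gives $(\omega_1 + \omega_1')(p_1) = (\omega_1 + \omega_1')(p_1')$. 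Since $p_1, p_1'$ were an arbitrary $f$-equivalent pair, $\omega_1 + \omega_1'$ is $f$-invariant. The same reasoning with $\lambda\,\omega_1(p_1) = \lambda\,\omega_1(p_1')$ handles scalar multiplication.

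Finally I would check that the zero form lies in $\Omega_f^k(X_1)$, which is trivial since $0(p_1) = 0 = 0(p_1')$ for every pair of plots, $f$-equivalent or not. Together with closure under addition and scalar multiplication, this establishes that $\Omega_f^k(X_1)$ is a vector subspace of $\Omega^k(X_1)$.

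I do not anticipate any genuine obstacle here; the statement is flagged in the text as trivial, and the only point requiring even minimal care is making explicit that the $f$-invariance condition is a family (indexed by $f$-equivalent pairs $(p_1,p_1')$) of linear conditions, so that their common solution set is automatically a linear subspace. The entire content is the interchange of the plotwise-defined linear operations on $\Omega^k(X_1)$ with the equalities defining $f$-invariance, and this is why the authors omit the proof.
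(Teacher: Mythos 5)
Your proof is correct: the paper explicitly omits the proof of this lemma as trivial, and your argument---that $f$-invariance is a family of linear conditions $\omega_1(p_1)=\omega_1(p_1')$ indexed by $f$-equivalent pairs, each linear because the operations on $\Omega^k(X_1)$ are defined plotwise---is exactly the routine verification the paper has in mind. Nothing is missing, and no further comparison is possible since there is no written proof in the paper to diverge from.
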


\subsubsection{The inverse of the pullback map $\pi^*$}

Using the diffeomorphism of Theorem \ref{omega:of:disjoint:union:splits:as:direct:sum:thm}, we can now describe the inverse of the ($k$th) pullback map $\pi^*$ as a map on the subspace of 
$\Omega^k(X_1)\oplus\Omega^k(X_2)$ determined by the following condition.

\begin{defn}\label{compatible:forms:defn}
Let $X_1$ and $X_2$ be two diffeological spaces, let $f:X_1\supseteq Y\to X_2$ be a smooth map, and let $k\geqslant 0$. Two forms $\omega_1\in\Omega^k(X_1)$ and $\omega_2\in\Omega^k(X_2)$ are 
said to be \textbf{compatible} if for every plot $p_1$ of the subset diffeology on $Y$ we have 
$$\omega_1(p_1)=\omega_2(f\circ p_1).$$ We denote by 
$$\Omega^k(X_1)\oplus_{comp}\Omega^k(X_2)=\{\omega_1\oplus\omega_2\,|\,\omega_1\mbox{ and }\omega_2\mbox{ are compatible}\}$$ the subset in $\Omega^k(X_1)\oplus\Omega^k(X_2)$ that 
consists of all pairs of compatible forms.
\end{defn}

We define next the map 
$$\mathcal{L}^k:\Omega_f^k(X_1)\oplus_{comp}\Omega^k(X_2)\to\Omega^k(X_1\cup_f X_2)$$ given by setting, for every plot $p:U\to X_1\cup_f X_2$ defined on a connected $U$, 
$$\mathcal{L}^k(\omega_1\oplus\omega_2)(p)=\left\{\begin{array}{ll} 
\omega_1(p_1) & \mbox{if }p=\hat{i}_1\circ p_1\mbox{ for some plot }p_1\mbox{ of }X_1,\\
\omega_2(p_2) & \mbox{if }p=i_2\circ p_2\mbox{ for some plot }p_2\mbox{ of }X_2.
\end{array}\right.$$

\begin{lemma}
For any two diffeological spaces $X_1$ and $X_2$ and for every smooth map $f:X_1\supseteq Y\to X_2$ the map $\mathcal{L}^k$ is well-defined.
\end{lemma}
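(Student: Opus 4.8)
The plan is to show that, for every pair $\omega_1 \oplus \omega_2$ in $\Omega_f^k(X_1) \oplus_{comp} \Omega^k(X_2)$, the prescription defining $\mathcal{L}^k(\omega_1 \oplus \omega_2)$ assigns to each plot of $X_1 \cup_f X_2$ a well-determined ordinary $k$-form, and that the resulting collection satisfies the compatibility condition required of an element of $\Omega^k(X_1 \cup_f X_2)$. Since an ordinary differential form on a domain $U$ is determined by its restrictions to the connected components of $U$, and each such restriction of a plot is again a plot, it suffices to define and verify everything on connected domains and then assemble by the sheaf condition; the pullback property, being local, likewise reduces to the connected case. So I would fix a plot $p$ with connected domain $U$. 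By the characterization of the plots of $X_1 \cup_f X_2$ recalled above, $p$ is of type 1, namely $p = \rho \circ \hat{i}_1 \circ p_1$ for some plot $p_1$ of $X_1$ (here $\rho : X_1 \sqcup X_2 \to X_1 \cup_f X_2$ denotes the defining quotient projection), or of type 2, namely $p = i_2 \circ p_2$ for some plot $p_2$ of $X_2$.

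The only substantive point, and the one I expect to be the main obstacle, is the independence of $\mathcal{L}^k(\omega_1 \oplus \omega_2)(p)$ from the choices implicit in these representations. Suppose first that $p$ is of type 1 via two plots $p_1$ and $p_1'$ of $X_1$. For each $u \in U$ the equality $\rho(\hat{i}_1(p_1(u))) = \rho(\hat{i}_1(p_1'(u)))$ means that $p_1(u)$ and $p_1'(u)$ are identified by the gluing relation; for points of $X_1$ this forces either $p_1(u) = p_1'(u)$, or $p_1(u), p_1'(u) \in Y$ with $f(p_1(u)) = f(p_1'(u))$. Thus $p_1$ and $p_1'$ are precisely $f$-equivalent, and the $f$-invariance of $\omega_1$ gives $\omega_1(p_1) = \omega_1(p_1')$. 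If instead $p$ is of type 2, the plot $p_2$ is unique because $i_2$ is injective, so no condition is needed there.

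It remains to reconcile the two prescriptions when $p$ is simultaneously of both types. Since $i_1(X_1 \setminus Y)$ and $i_2(X_2)$ are disjoint, a type-1 plot whose image lies in $i_2(X_2)$ must have $p_1(U) \subseteq Y$; then $p_1$ is a plot of the subset diffeology on $Y$, and comparing $p = i_2 \circ (f \circ p_1)$ with $p = i_2 \circ p_2$ and using the injectivity of $i_2$ yields $p_2 = f \circ p_1$. The compatibility of $\omega_1$ and $\omega_2$ (Definition \ref{compatible:forms:defn}) now gives $\omega_1(p_1) = \omega_2(f \circ p_1) = \omega_2(p_2)$, so the two values coincide. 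This is exactly where both defining properties of the domain $\Omega_f^k(X_1) \oplus_{comp} \Omega^k(X_2)$ enter.

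Finally, the compatibility condition is immediate from that of $\omega_1$ and $\omega_2$: for a smooth map $F : V \to U$ of domains, if $p$ is of type 1 via $p_1$ then $p \circ F = \rho \circ \hat{i}_1 \circ (p_1 \circ F)$ with $p_1 \circ F$ a plot of $X_1$, whence $\mathcal{L}^k(\omega_1 \oplus \omega_2)(p \circ F) = \omega_1(p_1 \circ F) = F^*(\omega_1(p_1)) = F^*(\mathcal{L}^k(\omega_1 \oplus \omega_2)(p))$, and the type-2 case is identical with $\omega_2$ in place of $\omega_1$. Together with the assembly over connected components this shows $\mathcal{L}^k(\omega_1 \oplus \omega_2) \in \Omega^k(X_1 \cup_f X_2)$, establishing that $\mathcal{L}^k$ is well-defined.
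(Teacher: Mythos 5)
Your proposal is correct and follows the same overall route as the paper's proof: independence of $\mathcal{L}^k(\omega_1\oplus\omega_2)(p)$ from the choice of lift, followed by verification of the smooth compatibility (pullback) condition; within that, your treatment of two lifts to $X_1$ (they are $f$-equivalent, so $f$-invariance applies) and of lifts to $X_2$ (unique by injectivity of $i_2$) coincides with the paper's. The genuine difference is that you handle a case the paper's proof passes over: a plot $p$ may simultaneously admit a lift $p_1$ to $X_1$ (necessarily with image in $Y$, as you argue from the disjointness of $i_1(X_1\setminus Y)$ and $i_2(X_2)$) and a lift $p_2$ to $X_2$, in which case $p_2=f\circ p_1$ and the two clauses in the definition of $\mathcal{L}^k$ produce the \emph{a priori} different values $\omega_1(p_1)$ and $\omega_2(p_2)$; their equality is exactly the compatibility of $\omega_1$ with $\omega_2$. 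The paper's proof uses only $f$-invariance and the injectivity of $i_2$ and never invokes compatibility at all, even though well-definedness genuinely fails without it (for a non-compatible pair, any plot $p_1$ of $Y$ witnessing the failure yields a plot $i_2\circ f\circ p_1$ of $X_1\cup_f X_2$ receiving two distinct values). So your mixed-type case is not a redundancy but the step that makes the hypothesis $\omega_1\oplus\omega_2\in\Omega_f^k(X_1)\oplus_{comp}\Omega^k(X_2)$ actually do its work; your explicit reduction to connected domains via the sheaf condition is likewise a point the paper leaves implicit.
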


\begin{proof}
We need to show that $\mathcal{L}^k(\omega_1\oplus\omega_2)(p)$ does not depend on the choice of the lift of $p$ to a plot $p_i$ of $X_i$, and that the assignment 
$p\mapsto\mathcal{L}^k(\omega_1\oplus\omega_2)(p)$ satisfies the smooth compatibility condition. The former of these claims is obvious if $p$ lifts to a plot of $X_2$; indeed, since $i_2$ is injective, such a 
lift is unique. Let $p_1$ and $p_1'$ be two lifts of $p$ to some plots of $X_1$. Then they are obviously $f$-equivalent. Since $\omega_1$ is $f$-invariant by assumption, we have that 
$\omega_1(p_1)=\omega_1(p_1')$, which implies that $\mathcal{L}^k(\omega_1\oplus\omega_2)(p)$ is well-defined.

Let us now show that $\mathcal{L}^k(\omega_1\oplus\omega_2)$ satisfies a smooth compatibility condition. Let $h:U'\to U$ be an ordinary smooth map; then either 
$\mathcal{L}^k(\omega_1\oplus\omega_2)(p\circ h)=\omega_1(p_1\circ h)=h^*(\omega_1(p_1))=h^*(\mathcal{L}^k(\omega_1\oplus\omega_2)(p))$ or 
$\mathcal{L}^k(\omega_1\oplus\omega_2)(p\circ h)=\omega_2(p_2\circ h)=h^*(\omega_2(p_2))=h^*(\mathcal{L}^k(\omega_1\oplus\omega_2)(p))$, and we deduce the smooth compatibility condition for 
$\mathcal{L}^k(\omega_1\oplus\omega_2)$ from those for $\omega_1$ and $\omega_2$ respectively.
\end{proof}

The map $\mathcal{L}^k$ is therefore well-defined, and it is quite obvious that it is linear.

\begin{thm}\label{cal:L:is:inverse:of:pullback:pi:thm}
The map $\mathcal{L}^k$ is a smooth inverse of the pullback map $\pi^*:\Omega^k(X_1\cup_f X_2)\to\Omega^k(X_1\sqcup X_2)\cong\Omega^k(X_1)\oplus\Omega^k(X_2)$.
\end{thm}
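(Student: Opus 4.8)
The plan is to establish three things: that the pullback $\pi^*$, read through the identification of Theorem \ref{omega:of:disjoint:union:splits:as:direct:sum:thm}, actually takes values in the subspace $\Omega_f^k(X_1)\oplus_{comp}\Omega^k(X_2)$ on which $\mathcal{L}^k$ is defined; that $\mathcal{L}^k$ is a two-sided inverse of the resulting map; and that $\mathcal{L}^k$ is smooth (which must be checked directly, since in the diffeological setting a smooth bijection need not be a diffeomorphism). The starting observation is that, under $\hat{i}_1^*\oplus\hat{i}_2^*$, the map $\pi^*$ becomes $\omega\mapsto(\pi\circ\hat{i}_1)^*(\omega)\oplus(\pi\circ\hat{i}_2)^*(\omega)$, and since $\pi\circ\hat{i}_2=i_2$ the second summand is simply $i_2^*(\omega)$. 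Throughout I write $\omega_1=(\pi\circ\hat{i}_1)^*(\omega)$ and $\omega_2=i_2^*(\omega)$.

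To see that the image lies in $\Omega_f^k(X_1)\oplus_{comp}\Omega^k(X_2)$ I would check $f$-invariance and compatibility separately. For $f$-invariance: if $p_1$ and $p_1'$ are $f$-equivalent plots of $X_1$, then $(\pi\circ\hat{i}_1)\circ p_1$ and $(\pi\circ\hat{i}_1)\circ p_1'$ coincide as plots of $X_1\cup_f X_2$, because wherever $p_1$ and $p_1'$ differ their values lie in $Y$ and share the same image under $f$, hence the same image under $\pi\circ\hat{i}_1$; applying $\omega$ gives $\omega_1(p_1)=\omega_1(p_1')$. For compatibility: on $Y$ one has $\pi\circ\hat{i}_1=i_2\circ f$, so for any plot $p_1$ of the subset diffeology on $Y$ we get $\omega_1(p_1)=\omega((\pi\circ\hat{i}_1)\circ p_1)=\omega(i_2\circ(f\circ p_1))=\omega_2(f\circ p_1)$, which is precisely the condition of Definition \ref{compatible:forms:defn}.

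Next I would verify that $\mathcal{L}^k$ inverts the map on both sides, checking the identities on plots with connected domains (the general case then following by the sheaf condition). For $\mathcal{L}^k\circ\pi^*=\mathrm{Id}$: given $\omega$ and a connected-domain plot $p$, the plot characterization puts $p$ in one of the two forms; if $p=(\pi\circ\hat{i}_1)\circ p_1$ then $\mathcal{L}^k(\omega_1\oplus\omega_2)(p)=\omega_1(p_1)=\omega((\pi\circ\hat{i}_1)\circ p_1)=\omega(p)$, and the case $p=i_2\circ p_2$ is identical with $\omega_2$, so $\mathcal{L}^k(\omega_1\oplus\omega_2)=\omega$. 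For $\pi^*\circ\mathcal{L}^k=\mathrm{Id}$: starting from a compatible pair $\omega_1\oplus\omega_2$ with $\omega_1$ $f$-invariant and setting $\omega=\mathcal{L}^k(\omega_1\oplus\omega_2)$, I would evaluate the two components of $\pi^*\omega$ on plots $p_1$ of $X_1$ and $p_2$ of $X_2$; since $(\pi\circ\hat{i}_1)\circ p_1$ and $i_2\circ p_2$ are plots of $X_1\cup_f X_2$ of the first and second form respectively, the definition of $\mathcal{L}^k$ returns $\omega_1(p_1)$ and $\omega_2(p_2)$, whence $(\pi\circ\hat{i}_1)^*\omega=\omega_1$ and $i_2^*\omega=\omega_2$. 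Here the well-definedness already established guarantees that $\omega_1(p_1)$ is independent of the chosen lift.

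Finally, for smoothness of $\mathcal{L}^k$ I would take a plot $q$ of the subset diffeology on $\Omega_f^k(X_1)\oplus_{comp}\Omega^k(X_2)$, write $q=q_1\oplus q_2$ with $q_1,q_2$ the component plots of $\Omega^k(X_1)$ and $\Omega^k(X_2)$ (as projections of a plot for the product diffeology), and check that $\mathcal{L}^k\circ q$ is a plot of $\Omega^k(X_1\cup_f X_2)$. By definition of the functional diffeology this amounts to verifying, for every plot $p:U\to X_1\cup_f X_2$, that $(v,u)\mapsto\mathcal{L}^k(q(v))(p)(u)$ is smooth; working locally and using the plot characterization on a connected neighborhood, this map equals either $(v,u)\mapsto q_1(v)(p_1)(u)$ or $(v,u)\mapsto q_2(v)(p_2)(u)$, both smooth precisely because $q_1,q_2$ are plots of $\Omega^k(X_1),\Omega^k(X_2)$ and $p_1,p_2$ are plots of $X_1,X_2$. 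I expect the only genuine care to lie in this last reduction and in the repeated passage between connected-domain plots and arbitrary ones via the sheaf condition; the substantive point making everything cohere is that the $f$-invariance built into the domain of $\mathcal{L}^k$ is exactly what renders the $X_1$-side evaluation independent of the chosen lift, which is also why $\pi^*$ is forced to land in $\Omega_f^k(X_1)\oplus_{comp}\Omega^k(X_2)$.
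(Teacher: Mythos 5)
Your proposal is correct and follows essentially the same route as the paper's proof: first showing that the range of $\pi^*$ lies in $\Omega_f^k(X_1)\oplus_{comp}\Omega^k(X_2)$ (via the identity $\pi\circ\hat{i}_1=i_2\circ f$ on $Y$ for compatibility, and the collapsing of $f$-equivalent plots under $\pi$ for $f$-invariance), then the two-sided inverse property, then smoothness by decomposing a plot of the domain into its two component plots and reducing the evaluation map to the two connected-domain cases. The only difference is one of emphasis: you spell out the inverse verification that the paper dismisses as obvious from the construction, and your notation $\pi\circ\hat{i}_1$ is in fact more careful than the paper's.
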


\begin{proof}
Let $\omega_1\oplus\omega_2\in\mbox{Range}(\pi^*)$, and let us show that $\omega_1$ and $\omega_2$ are compatible, and that $\omega_1$ is $f$-invariant. Let $p_1$ be a plot for the subset diffeology 
on $Y$; it is thus a plot of $X_1$, and $f\circ p_1$ is a plot of $X_2$. To both of them there corresponds a plot $p$ of $X_1\cup_f X_2$ given by $p=i_2\circ f\circ p_1=\hat{i}_1\circ p_1$. Since 
$\omega_1\oplus\omega_2$ is in the range of $\pi^*$, it is the image $\pi^*\omega$ of some $\omega\in\Omega^k(X_1\cup_f X_2)$. The forms $\omega_1$ and $\omega_2$ are given by 
$$\omega_1(p_1)=\omega(\hat{i}_1\circ p_1)\,\,\mbox{ and }\,\,\omega_2(p_2)=\omega(i_2\circ p_2)$$ respectively (for any arbitrary plots $p_1$ of $X_1$ and $p_2$ of $X_2$. Thus, in the present case we 
have 
$$\omega_1(p_1)=\omega(\hat{i}_1\circ p_1)=\omega(p)=\omega(i_2\circ f\circ p_1)=\omega_2(f\circ p_1),$$ which implies the compatibility of $\omega_1$ and $\omega_2$. 

Suppose now that $p_1$ and $p_1'$ are two $f$-equivalent plots. Then obviously $\hat{i}_1\circ p_1=\hat{i}_1\circ p_1'$, therefore we have 
$$\omega_1(p_1)=\omega(\hat{i}_1\circ p_1)=\omega(\hat{i}_1\circ p_1')=\omega_1(p_1'),$$ that is, $\omega_1$ is $f$-invariant. In particular, we conclude that the two compositions $\mathcal{L}^k\circ\pi^*$ 
and $\pi^*\circ\mathcal{L}^k$ are always defined. That they are inverses of each other, is obvious from the construction of $\mathcal{L}^k$.

It remains to check that $\mathcal{L}^k$ is smooth. Let $q$ be a plot of $\Omega_f^k(X_1)\oplus_{comp}\Omega^k(X_2)$, and let $U$ be its domain of definition. Then for all $u\in U$ we have that 
$q(u)=q(u)_1\oplus q(u)_2$ for some $q(u)_1\in\Omega_f^k(X_1)$ and $q(u)_2\in\Omega^k(X_2)$, and the assignments $u\mapsto q(u)_1$ and $u\mapsto q(u)_2$ are plots of $\Omega_f^k(X_1)$ and 
of $\Omega^k(X_2)$ respectively. 

To show that $u\mapsto\mathcal{L}^k(q(u)_1\oplus q(u)_2)$ is a plot of $\Omega^k(X_1\cup_f X_2)$, as is required for showing the smoothness of $\mathcal{L}^k$, we need to consider a plot 
$p:U'\to X_1\cup_f X_2$ and show that the evaluation map $(u,u')\mapsto\mathcal{L}^k(q(u)_1\oplus q(u)_2)(p)(u')$ is a usual smooth section of $\Lambda^k(U\times U')$. It suffices to assume that $U'$ is 
connected; then $p$ lifts to either a plot $p_1$ of $X_1$ or to a plot $p_2$ of $X_2$. Depending on these two cases, the evaluation map for $q$ either has form $(u,u')\mapsto q(u)_1(p_1)(u')$ or 
$(u,u')\mapsto q(u)_2(p_2)(u')$, which in both cases is a smooth section of $\Lambda^k(U\times U')$, because $q(u)_1,q(u)_2$ are plots, whence the claim.
\end{proof}

Theorem \ref{cal:L:is:inverse:of:pullback:pi:thm} trivially implies the following.

\begin{cor}\label{omega:of:glued:cor}
The map $\pi^*$ is a diffeomorphism $\Omega^k(X_1\cup_f X_2)\to\Omega_f^k(X_1)\oplus_{comp}\Omega^k(X_2)$. 
\end{cor}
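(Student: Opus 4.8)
The plan is to read off the corollary almost entirely from Theorem~\ref{cal:L:is:inverse:of:pullback:pi:thm}, which already does the substantive work. That theorem constructs a smooth linear map $\mathcal{L}^k:\Omega_f^k(X_1)\oplus_{comp}\Omega^k(X_2)\to\Omega^k(X_1\cup_f X_2)$ and shows that it is a two-sided inverse of the pullback $\pi^*$, once one knows that $\pi^*$ actually lands in $\Omega_f^k(X_1)\oplus_{comp}\Omega^k(X_2)$. So the only genuine content left to assemble is (i) that $\mathrm{Range}(\pi^*)\subseteq\Omega_f^k(X_1)\oplus_{comp}\Omega^k(X_2)$, i.e.\ the image of every form pulls back to a compatible pair whose $X_1$-component is $f$-invariant, and (ii) that $\pi^*$ is injective with smooth inverse. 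Both of these are established inside the proof of the preceding theorem, so the corollary is a matter of packaging.

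Concretely, first I would note that $\pi^*$ is smooth and linear for general reasons (it is a pullback along the smooth quotient projection $\pi$, composed with the linear diffeomorphism of Theorem~\ref{omega:of:disjoint:union:splits:as:direct:sum:thm} that identifies $\Omega^k(X_1\sqcup X_2)$ with $\Omega^k(X_1)\oplus\Omega^k(X_2)$). Next I would invoke the first half of the proof of Theorem~\ref{cal:L:is:inverse:of:pullback:pi:thm}: given any $\omega\in\Omega^k(X_1\cup_f X_2)$, writing $\omega_1\oplus\omega_2=\pi^*\omega$, the computation $\omega_1(p_1)=\omega(\hat{i}_1\circ p_1)=\omega(i_2\circ f\circ p_1)=\omega_2(f\circ p_1)$ for plots $p_1$ of $Y$ shows $\omega_1$ and $\omega_2$ are compatible, and the identity $\hat{i}_1\circ p_1=\hat{i}_1\circ p_1'$ for $f$-equivalent plots shows $\omega_1$ is $f$-invariant. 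Hence $\pi^*$ corestricts to a map into $\Omega_f^k(X_1)\oplus_{comp}\Omega^k(X_2)$.

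It then remains only to conclude that this corestriction is a diffeomorphism. Since $\mathcal{L}^k$ is defined on exactly $\Omega_f^k(X_1)\oplus_{comp}\Omega^k(X_2)$ and is a smooth two-sided inverse of $\pi^*$ (Theorem~\ref{cal:L:is:inverse:of:pullback:pi:thm}), the corestricted $\pi^*$ is a bijection with smooth inverse $\mathcal{L}^k$; being itself smooth, it is a diffeomorphism onto the stated target. One small point worth making explicit is that the target of the corestriction is carried with its subset diffeology inherited from $\Omega^k(X_1)\oplus\Omega^k(X_2)$, so that ``diffeomorphism'' is understood in that sense; this is automatic since $\mathcal{L}^k$ was already shown to be smooth for precisely that diffeology.

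I expect no real obstacle here: the corollary is, as the text says, a trivial consequence of the theorem, and the entire difficulty---verifying that the image of $\pi^*$ is cut out exactly by the $f$-invariance and compatibility conditions, and that $\mathcal{L}^k$ is smooth---has already been discharged in Theorem~\ref{cal:L:is:inverse:of:pullback:pi:thm}. The only thing to be careful about is bookkeeping: making sure that the domain and codomain of $\pi^*$ are correctly matched to those of $\mathcal{L}^k$, and that one is asserting a diffeomorphism onto the compatibility subspace rather than onto the full direct sum. Thus my ``proof'' would be little more than a sentence citing the theorem, perhaps restating that $\pi^*$ and $\mathcal{L}^k$ are mutually inverse smooth maps between $\Omega^k(X_1\cup_f X_2)$ and $\Omega_f^k(X_1)\oplus_{comp}\Omega^k(X_2)$.
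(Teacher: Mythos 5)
Your proposal is correct and matches the paper exactly: the paper proves this corollary simply by remarking that it follows trivially from Theorem~\ref{cal:L:is:inverse:of:pullback:pi:thm}, whose proof already establishes that $\mathrm{Range}(\pi^*)$ lies in $\Omega_f^k(X_1)\oplus_{comp}\Omega^k(X_2)$ and that $\mathcal{L}^k$ is a smooth two-sided inverse. Your added care about the smoothness of $\pi^*$ itself and about the subset diffeology on the target is sensible bookkeeping but does not constitute a different route.
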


\subsection{The differential and gluing}

We shall consider next the behavior of the differential (the coboundary) $d$ operator under gluing. Let $X_1$ and $X_2$ be two diffeological spaces, and let $f:X_1\supseteq Y\to X_2$ be a smooth 
map. For every $\omega\in\Omega^k(X_1\cup_f X_2)$ the differential $d\omega\in\Omega^{k+1}(X_1\cup_f X_2)$ is determined by the collection of the usual differentials of standard $k$-forms $\omega(p)$ 
for all plots $p$ of $X_1\cup_f X_2$. Now, we have just seen that $\omega$ is essentially the union (or the wedge) of a $k$-form on $X_1$ with a $k$-form on $X_2$, and every plot $p$ of $X_1\cup_f X_2$ 
is in some sense a union of a plot of $X_1$ with a plot of $X_2$ (one of which could be absent if the domain of definition of $p$ is connected), see \cite{pseudobundle} and Lemma 4.1 in 
\cite{pseudobundle-pseudometrics}. The following therefore is an expected statement.

\begin{thm}\label{differential:and:gluing:thm}
Let $X_1$ and $X_2$ be two diffeological spaces, let $f:X_1\supseteq Y\to X_2$ be a smooth map, and let $\omega\in\Omega^k(X_1\cup_f X_2)$ be a $k$-form. Let $\pi^*(\omega)=\omega_1\oplus\omega_2$. 
Then 
$$\pi^*(d\omega)=d\omega_1\oplus d\omega_2.$$
\end{thm}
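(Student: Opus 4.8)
The plan is to reduce everything to a single elementary fact: for diffeological differential forms, pullback along any smooth map commutes with the exterior differential. Concretely, if $g\colon X\to X'$ is smooth and $\eta\in\Omega^k(X')$, then for every plot $p$ of $X$ one has $(d(g^*\eta))(p)=d((g^*\eta)(p))=d(\eta(g\circ p))$, whereas $(g^*(d\eta))(p)=(d\eta)(g\circ p)=d(\eta(g\circ p))$; since these agree on all plots, $d\circ g^*=g^*\circ d$. I would record this as a preliminary observation, as it is immediate from the plot-wise definitions of $g^*$ and of $d$ recalled in Section 1.

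Next I would apply this twice. First, the quotient projection $\pi\colon X_1\sqcup X_2\to X_1\cup_f X_2$ is smooth, so $\pi^*(d\omega)=d(\pi^*\omega)$. Second, the identifying diffeomorphism of Theorem~\ref{omega:of:disjoint:union:splits:as:direct:sum:thm}, namely $\Phi=\hat{i}_1^*\oplus\hat{i}_2^*$, is built from the two pullbacks $\hat{i}_1^*$ and $\hat{i}_2^*$, each of which commutes with $d$ by the preliminary observation; hence $\Phi$ intertwines the differential on $\Omega^k(X_1\sqcup X_2)$ with the component-wise differential on $\Omega^k(X_1)\oplus\Omega^k(X_2)$, i.e.\ $\Phi(d\eta)=d(\hat{i}_1^*\eta)\oplus d(\hat{i}_2^*\eta)$.

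Finally I would combine the two. By hypothesis $\pi^*(\omega)=\omega_1\oplus\omega_2$ means $\Phi(\pi^*\omega)=\omega_1\oplus\omega_2$, so
$$\Phi(\pi^*(d\omega))=\Phi(d(\pi^*\omega))=d(\hat{i}_1^*(\pi^*\omega))\oplus d(\hat{i}_2^*(\pi^*\omega))=d\omega_1\oplus d\omega_2,$$
which is exactly the asserted identity $\pi^*(d\omega)=d\omega_1\oplus d\omega_2$ read through the identification $\Omega^k(X_1\sqcup X_2)\cong\Omega^k(X_1)\oplus\Omega^k(X_2)$. I do not anticipate a genuine obstacle here; the only thing requiring care is bookkeeping, namely keeping track that $\omega_1$ and $\omega_2$ are the images of $\pi^*\omega$ under $\hat{i}_1^*$ and $\hat{i}_2^*$ and that the differential on the direct sum is the component-wise one, both of which are handled by the commutation fact above. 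Note in particular that, unlike later statements in the paper, this one needs no assumption on $f$ (not even that it be a diffeomorphism), since smoothness of $\pi$, $\hat{i}_1$, and $\hat{i}_2$ is all that the argument uses.
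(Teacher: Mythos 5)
Your proof is correct, and your closing remark is also accurate: nothing beyond smoothness of $f$ is used, exactly as in the paper's statement. At bottom both arguments rest on the same plot-wise identity --- the definitions of $d$ and of pullback give $(g^*(d\eta))(p)=d(\eta(g\circ p))=(d(g^*\eta))(p)$ --- but the organization differs. The paper argues directly on plots: it takes a plot $p$ of $X_1\sqcup X_2$ with connected domain, uses the fact that such a $p$ essentially coincides with a plot $p_1$ of $X_1$ or a plot $p_2$ of $X_2$, and computes both $\pi^*(d\omega)(p)$ and $(d\omega_1\oplus d\omega_2)(p)$ to be $d(\omega_1(p_1))$ (respectively $d(\omega_2(p_2))$). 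You instead isolate the commutation $d\circ g^*=g^*\circ d$ as a general lemma and apply it three times, to $\pi$, $\hat{i}_1$ and $\hat{i}_2$, so that the case analysis over plots of the disjoint union is absorbed once and for all into the already-established diffeomorphism $\hat{i}_1^*\oplus\hat{i}_2^*$ of Theorem~\ref{omega:of:disjoint:union:splits:as:direct:sum:thm}. What your packaging buys is modularity and reusability: the naturality lemma immediately yields, for instance, that compatible forms have compatible differentials, which is one of the two implications of Lemma~\ref{cocycles:and:coboundaries:split:lem}, and it makes visible that the theorem is an instance of functoriality of $d$ rather than a special feature of gluing. What the paper's version keeps is the explicit mechanics of lifting plots of the glued or disjoint space, which is the computational pattern that the remainder of Section~2 (vanishing of forms, fibres of $\Lambda^k$, the maps $\tilde{\rho}_i^{\Lambda^k}$) relies on.
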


\begin{proof}
Let $p:U\to X_1\sqcup X_2$ be a plot of $X_1\sqcup X_2$. We need to compare $\pi^*(d\omega)(p)$ with $(d\omega_1\oplus d\omega_2)(p)$. It suffices to assume that $U$ is connected; then $p$ essentially 
coincides with either a plot $p_1$ of $X_1$ or a plot $p_2$ of $X_2$. Suppose it coincides with $p_1$. Then by construction and definition 
$$\pi^*(d\omega)(p)=d\omega(\pi\circ p)=d(\omega(\pi\circ p))=d(\omega_1(p_1)),$$
$$(d\omega_1\oplus d\omega_2)(p)=(d\omega_1)(p_1)=d(\omega_1(p_1)),$$ so the desired equality is true. Since the case when $p$ is equivalent to a plot of $X_2$ is completely analogous, we obtain 
the desired claim.
\end{proof}

\subsection{The extendibility conditions $\calD_1^{\Omega^k}=\calD_2^{\Omega^k}$ and the images of $\pi_1^{\Omega},\pi_2^{\Omega}$}

So far we have only assumed that the gluing map $f$ is smooth (which is always required for the gluing construction). Obtaining further claims needs some additional conditions, that we call extendibility 
conditions and describe in this section. 

\begin{defn}
Let $X_1$ and $X_2$ be two diffeological spaces, let $f:X_1\supseteq Y\to X_2$ be a smooth map, and let $i:Y\hookrightarrow X_1$ and $j:f(Y)\hookrightarrow X_2$ be the natural inclusions. We say that 
$f$ satisfies the \textbf{$k$-th extendibility condition} if 
$$i^*(\Omega^k(X_1))=(f^*j^*)(\Omega^k(X_2)).$$ Denote now by $\calD_1^{\Omega^k}$ the diffeology on $\Omega^k(Y)$ that is the pushforward of the diffeology of $\Omega^k(X_1)$ by the map $i^*$; 
likewise, denote by $\calD_2^{\Omega^k}$ the diffeology on $\Omega^k(Y)$ that is the pushforward of the diffeology of $\Omega^k(X_2)$ by the map $f^*j^*$. We say that $f$ satisfies the \textbf{$k$-th 
smooth extendibility condition} if 
$$\calD_1^{\Omega^k}=\calD_2^{\Omega^k}.$$
\end{defn}

The need for these two conditions is based on the following lemma and is rendered explicit by the corollary that follows it.

\begin{lemma}\label{when:two:forms:are:compatible:lem}
Let $\omega_1\in\Omega^k(X_1)$ and $\omega_2\in\Omega^k(X_2)$ be two $k$-forms, and let $f:X_1\supseteq Y\to X_2$ be a smooth map. The forms $\omega_1$ and $\omega_2$ are compatible if 
and only if 
$$i^*\omega_1=(f^*j^*)\omega_2.$$
\end{lemma}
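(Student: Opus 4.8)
The plan is to unwind both the notion of compatibility and the equality $i^*\omega_1=(f^*j^*)\omega_2$ into assertions about the values of the forms on plots of $Y$, and to observe that these assertions are literally the same. The only tool I need is the defining property of the pullback of a diffeological form: for a smooth map $g:A\to B$ and $\omega\in\Omega^k(B)$, the pullback $g^*\omega\in\Omega^k(A)$ is given by $(g^*\omega)(q)=\omega(g\circ q)$ for every plot $q$ of $A$, together with the elementary fact that two elements of $\Omega^k(Y)$ coincide precisely when they take the same value on every plot of $Y$.

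First I would fix an arbitrary plot $q$ of the subset diffeology on $Y$ and compute the left-hand side: since $i:Y\hookrightarrow X_1$ is the inclusion, $i\circ q$ is just $q$ regarded as a plot of $X_1$, so $(i^*\omega_1)(q)=\omega_1(i\circ q)=\omega_1(q)$. For the right-hand side I would use functoriality of the pullback, namely $f^*j^*=(j\circ f)^*$, where $j\circ f:Y\to X_2$ is nothing but $f$ viewed as a map into $X_2$; hence $((f^*j^*)\omega_2)(q)=\omega_2((j\circ f)\circ q)=\omega_2(f\circ q)$, where $f\circ q$ is the corresponding plot of $X_2$.

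Combining these two computations, the two forms $i^*\omega_1$ and $(f^*j^*)\omega_2$ in $\Omega^k(Y)$ agree on the plot $q$ if and only if $\omega_1(q)=\omega_2(f\circ q)$. Since this must hold for every plot $q$ of $Y$, and since forms are determined by their values on plots, the equality $i^*\omega_1=(f^*j^*)\omega_2$ is equivalent to the condition $\omega_1(p_1)=\omega_2(f\circ p_1)$ for all plots $p_1$ of $Y$, which is exactly the definition of compatibility (Definition \ref{compatible:forms:defn}). This establishes both implications simultaneously.

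There is no real obstacle here; the statement is essentially a restatement of the definitions once the pullback is written out on plots. The only point requiring care is the bookkeeping of domains and codomains --- in particular recognizing that $j\circ f$ is the composite that recovers $f:Y\to X_2$, so that $(f^*j^*)\omega_2$ evaluated on a plot of $Y$ reproduces the right-hand side $\omega_2(f\circ p_1)$ appearing in the compatibility condition, rather than an object defined only over $f(Y)$.
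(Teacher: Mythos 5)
Your proof is correct and follows essentially the same route as the paper's: both arguments fix an arbitrary plot of $Y$, unwind $(i^*\omega_1)(p)=\omega_1(p)$ and $((f^*j^*)\omega_2)(p)=\omega_2(f\circ p)$ via the definition of the pullback (with $j$ acting as an inclusion), and observe that equality on all plots is exactly the compatibility condition. The only cosmetic difference is that you treat both implications simultaneously as an equivalence on each plot, whereas the paper writes out the two directions separately.
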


\begin{proof}
Let $\omega_1$ and $\omega_2$ be compatible, and let $p:U\to Y\subseteq X_1$ be an arbitrary plot of $Y$. Since $(i^*\omega_1)(p)=\omega_1(i\circ p)=\omega_1(p)$ and 
$(f^*j^*)(\omega_2)(p)=\omega_2(f\circ j\circ p)=\omega_2(f\circ p)$, we obtain the desired equality $i^*\omega_1=(f^*j^*)\omega_2$ by the assumption of compatibility of $\omega_1$ and $\omega_2$. 

Suppose now that $i^*\omega_1=(f^*j^*)\omega_2$ holds; let us show that $\omega_1$ and $\omega_2$ are compatible. Let again $p$ be any plot of $Y$. Then 
$$\omega_1(p)=\omega_1(i\circ p)=(i^*\omega_1)(p)=(f^*j^*)(\omega_2)(p)=\omega_2(f\circ j\circ p)=\omega_2(f\circ p),$$ therefore the compatibility condition $\omega_1(p)=\omega_2(f\circ p)$ follows 
from the assumption.
\end{proof}

\begin{cor}\label{when:pi-omega:are:surjective:cor}
Let $\pi_1^{\Omega}:\Omega^k(X_1)\oplus_{comp}\Omega^k(X_2)\to\Omega^k(X_1)$ and $\pi_2^{\Omega}:\Omega^k(X_1)\oplus_{comp}\Omega^k(X_2)\to\Omega^k(X_2)$ be induced by the standard 
direct sum projections. Then $\pi_1^{\Omega}$ and $\pi_2^{\Omega}$ are both surjective if and only if $f$ satisfies the extendibility condition $i^*(\Omega^k(X_1))=(f^*j^*)(\Omega^k(X_2))$.
\end{cor}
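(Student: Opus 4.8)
The plan is to reduce the surjectivity of each of the two projections to a one-sided inclusion between the subspaces $i^*(\Omega^k(X_1))$ and $(f^*j^*)(\Omega^k(X_2))$ of $\Omega^k(Y)$, using the characterization of compatibility furnished by Lemma \ref{when:two:forms:are:compatible:lem}, and then to observe that the conjunction of the two inclusions is precisely the extendibility condition.

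First I would unfold the meaning of surjectivity of $\pi_1^{\Omega}$. Since $\pi_1^{\Omega}$ sends a compatible pair $\omega_1\oplus\omega_2$ to $\omega_1$, it is surjective if and only if every $\omega_1\in\Omega^k(X_1)$ admits at least one $\omega_2\in\Omega^k(X_2)$ compatible with it. By Lemma \ref{when:two:forms:are:compatible:lem} this compatibility is equivalent to the equality $i^*\omega_1=(f^*j^*)\omega_2$, so the existence of such an $\omega_2$ for a prescribed $\omega_1$ is exactly the condition $i^*\omega_1\in(f^*j^*)(\Omega^k(X_2))$. Demanding this for every $\omega_1$ yields that $\pi_1^{\Omega}$ is surjective if and only if $i^*(\Omega^k(X_1))\subseteq(f^*j^*)(\Omega^k(X_2))$.

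The treatment of $\pi_2^{\Omega}$ is entirely symmetric: fixing $\omega_2\in\Omega^k(X_2)$ and seeking a compatible $\omega_1\in\Omega^k(X_1)$, the same lemma shows that such an $\omega_1$ exists precisely when $(f^*j^*)\omega_2\in i^*(\Omega^k(X_1))$, whence $\pi_2^{\Omega}$ is surjective if and only if $(f^*j^*)(\Omega^k(X_2))\subseteq i^*(\Omega^k(X_1))$. Combining the two equivalences, both projections are surjective if and only if both inclusions hold simultaneously, that is, if and only if $i^*(\Omega^k(X_1))=(f^*j^*)(\Omega^k(X_2))$, which is the asserted extendibility condition.

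There is no genuinely hard step here; the entire content is carried by Lemma \ref{when:two:forms:are:compatible:lem}, and the only thing requiring care is matching each projection with the correct direction of inclusion (surjectivity of $\pi_1^{\Omega}$ corresponds to $i^*(\Omega^k(X_1))$ being contained in $(f^*j^*)(\Omega^k(X_2))$, and the reverse for $\pi_2^{\Omega}$). Once this bookkeeping is set up correctly, the corollary follows by a purely set-theoretic manipulation of the two inclusions.
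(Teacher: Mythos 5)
Your proof is correct and follows essentially the same route as the paper's: both reduce surjectivity of each projection, via Lemma \ref{when:two:forms:are:compatible:lem}, to a one-sided inclusion between $i^*(\Omega^k(X_1))$ and $(f^*j^*)(\Omega^k(X_2))$, and then observe that the conjunction of the two inclusions is the extendibility condition. The only difference is that you spell out the second inclusion explicitly where the paper appeals to symmetry, which is a matter of exposition, not substance.
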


\begin{proof}
A form $\omega_1\in\Omega^k(X_1)$ belongs to the range of $\pi_1^{\Omega}$ if and only if there exists a form $\omega_2\in\Omega^k(X_2)$ such that $\omega_1$ and $\omega_2$ are compatible. 
By Lemma \ref{when:two:forms:are:compatible:lem} this is equivalent to $i^*\omega_1\in(f^*j^*)(\Omega^k(X_2))$. Asking for this being true for all $\omega_1\in\Omega^k(X_1)$ is obviously equivalent 
to the inclusion $i^*(\Omega^k(X_1))\subseteq(f^*j^*)(\Omega^k(X_2))$. Applying exactly the same reasoning to an arbitrary $\omega_2\in\Omega^k(X_2)$, we obtain the claim.
\end{proof}

\begin{rem}
As is clear from the proof of Corollary \ref{when:pi-omega:are:surjective:cor}, the necessary and sufficient condition for only $\pi_1^{\Omega}$ to be surjective is 
$i^*(\Omega^k(X_1))\subseteq(f^*j^*)(\Omega^k(X_2))$; that for surjectivity of only $\pi_2^{\Omega}$ is $(f^*j^*)(\Omega^k(X_2))\subseteq i^*(\Omega^k(X_1))$.
\end{rem}

\subsection{The De Rham groups $H_{dR}^k(X_1\cup_f X_2)$}

We shall now consider the De Rham groups of $X_1\cup_f X_2$ as they relate to those of $X_1$ and $X_2$. Their description is based on the straightforward behavior of the differential under gluing 
(Theorem \ref{differential:and:gluing:thm}). 

\paragraph{Cocycles and coboundaries} Some observations regarding the complex of the coccyges, and that of the coboundaries, are immediate from Theorem \ref{differential:and:gluing:thm}. 

\begin{lemma}\label{cocycles:and:coboundaries:in:glued:lem}
Let $X_1$ and $X_2$ be two diffeological spaces, and let $f:X_1\supseteq Y\to X_2$ be a diffeomorphism such that $\calD_1^{\Omega^k}=\calD_2^{\Omega^k}$. Then:
$$Z_{dR}^k(X_1\cup_f X_2)\subseteq Z_{dR}^k(X_1)\oplus Z_{dR}^k(X_2),$$
$$B_{dR}^k(X_1\cup_f X_2)\subseteq B_{dR}^k(X_1)\oplus B_{dR}^k(X_2).$$
\end{lemma}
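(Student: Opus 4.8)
The plan is to derive both inclusions from a single structural fact: once forms on $X_1\cup_f X_2$ are identified with compatible pairs of forms on $X_1$ and $X_2$, the differential acts separately on the two components. Concretely, Corollary~\ref{omega:of:glued:cor} supplies an injective pullback map $\pi^*$ (a diffeomorphism onto its image) carrying $\Omega^k(X_1\cup_f X_2)$ to $\Omega_f^k(X_1)\oplus_{comp}\Omega^k(X_2)\subseteq\Omega^k(X_1)\oplus\Omega^k(X_2)$, and it is with respect to this embedding that the two claimed inclusions are to be read. Since, by Theorem~\ref{differential:and:gluing:thm}, this $\pi^*$ intertwines $d$ with the componentwise differential $\omega_1\oplus\omega_2\mapsto d\omega_1\oplus d\omega_2$, each inclusion reduces to a short chase on the two summands.

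For the first inclusion I would take $\omega\in Z_{dR}^k(X_1\cup_f X_2)$ and write $\pi^*(\omega)=\omega_1\oplus\omega_2$ with $\omega_1\in\Omega_f^k(X_1)$ and $\omega_2\in\Omega^k(X_2)$. Theorem~\ref{differential:and:gluing:thm} then gives $\pi^*(d\omega)=d\omega_1\oplus d\omega_2$. As $d\omega=0$ and $\pi^*$ is injective, both components must vanish, so $d\omega_1=0$ and $d\omega_2=0$; hence $\omega_1\oplus\omega_2\in Z_{dR}^k(X_1)\oplus Z_{dR}^k(X_2)$, which is exactly the asserted inclusion under the identification above.

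For the second inclusion I would take $\omega\in B_{dR}^k(X_1\cup_f X_2)$ (the case $k=0$ being trivial, since $B_{dR}^0=\{0\}$), so that $\omega=d\eta$ for some $\eta\in\Omega^{k-1}(X_1\cup_f X_2)$, and apply Corollary~\ref{omega:of:glued:cor} in degree $k-1$ to write $\pi^*(\eta)=\eta_1\oplus\eta_2$. Applying Theorem~\ref{differential:and:gluing:thm} to $\eta$ yields $\pi^*(\omega)=\pi^*(d\eta)=d\eta_1\oplus d\eta_2$. Since $\eta_1\in\Omega^{k-1}(X_1)$ and $\eta_2\in\Omega^{k-1}(X_2)$, the forms $d\eta_1$ and $d\eta_2$ are coboundaries on $X_1$ and on $X_2$ respectively, so $\pi^*(\omega)\in B_{dR}^k(X_1)\oplus B_{dR}^k(X_2)$, as required.

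I do not expect a genuine obstacle: the entire content is packaged in Theorem~\ref{differential:and:gluing:thm}, with the rest being bookkeeping through the identification of Corollary~\ref{omega:of:glued:cor} (both of which hold already under smoothness of $f$). The only point deserving care is that the statement is an inclusion and not an equality, and this is structural rather than difficult: the image of $\pi^*$ is the proper subspace $\Omega_f^k(X_1)\oplus_{comp}\Omega^k(X_2)$, so a pair of cocycles (resp. coboundaries) that is not $f$-invariant-and-compatible simply does not arise from the glued space. The hypotheses that $f$ be a diffeomorphism and that $\calD_1^{\Omega^k}=\calD_2^{\Omega^k}$ are what secure the ambient setting in which these inclusions can subsequently be sharpened—via the surjectivity of the component projections controlled by Corollary~\ref{when:pi-omega:are:surjective:cor}—but they are not needed for the inclusions themselves.
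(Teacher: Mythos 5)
Your proof is correct and follows the same route as the paper: both arguments rest on Theorem~\ref{differential:and:gluing:thm} (the pullback $\pi^*$ intertwines $d$ with the componentwise differential) together with the identification of $\Omega^k(X_1\cup_f X_2)$ with compatible pairs from Corollary~\ref{omega:of:glued:cor}, from which both inclusions follow by the same short chase. Your write-up is merely more explicit than the paper's (spelling out the $k=0$ case and noting that the hypotheses on $f$ are not actually needed for these inclusions), but the underlying argument is identical.
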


\begin{proof}
This follows from Theorem \ref{differential:and:gluing:thm}, whose essence is that $d\omega$, for any $\omega\in\Omega^k(X_1\cup_f )$, is canonically identified, via an isomorphism, to 
$d\omega_1\oplus d\omega_2$. It is then obvious that $B_{dR}^k(X_1\cup_f X_2)\subseteq B_{dR}^k(X_1)\oplus B_{dR}^k(X_2)$. Furthermore, $d\omega=0$ if and only if both $d\omega_1=0$ and 
$d\omega_2=0$, therefore $Z_{dR}^k(X_1\cup_f X_2)\subseteq Z_{dR}^k(X_1)\oplus Z_{dR}^k(X_2)$. 
\end{proof}

\paragraph{Compatibility of $d\omega_1$ and $d\omega$ \emph{vs.} compatibility of $\omega_1$ and $\omega_2$} That the latter implies the former, is implicit in Theorem \ref{differential:and:gluing:thm}. 
We shall now discuss why the former implies the latter.

\begin{lemma}\label{cocycles:and:coboundaries:split:lem}
The differentials $d\omega_1$ and $d\omega_2$ of two forms $\omega_1\in\Omega^{k-1}(X_1)$ and $\omega_2\in\Omega^{k-1}(X_2)$ are compatible if and only if the forms $\omega_1$ and $\omega_2$ 
are themselves compatible. In particular, 
$$Z_{dR}^k(X_1\cup_f X_2)=Z_{dR}^k(X_1)\oplus Z_{dR}^k(X_2)$$
$$B_{dR}^k(X_1\cup_f X_2)=B_{dR}^k(X_1)\oplus B_{dR}^k(X_2).$$
\end{lemma}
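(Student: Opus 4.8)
The plan is to run everything through the pullback criterion of Lemma~\ref{when:two:forms:are:compatible:lem}, which says that $\omega_1,\omega_2$ are compatible precisely when $i^*\omega_1=(f^*j^*)\omega_2$ in $\Omega^{k-1}(Y)$, and correspondingly that $d\omega_1,d\omega_2$ are compatible precisely when $i^*(d\omega_1)=(f^*j^*)(d\omega_2)$ in $\Omega^k(Y)$. The one structural fact I would record first is that the differential commutes with an arbitrary pullback at the level of diffeological forms: from $d\alpha(p)=d(\alpha(p))$ and $(g^*\alpha)(p)=\alpha(g\circ p)$ one reads off $g^*(d\alpha)=d(g^*\alpha)$ directly from the definitions, and I would apply this to the maps $i:Y\to X_1$ and $j\circ f:Y\to X_2$, whose pullbacks are $i^*$ and $f^*j^*=(j\circ f)^*$. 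The easy half (``latter implies former'') is then immediate: if $i^*\omega_1=(f^*j^*)\omega_2$, applying $d$ and commuting it past the two pullbacks yields $i^*(d\omega_1)=(f^*j^*)(d\omega_2)$, that is, $d\omega_1,d\omega_2$ are compatible.

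The reverse half (``former implies latter'') is where I expect the real difficulty. Commuting $d$ through the pullbacks rewrites the hypothesis $i^*(d\omega_1)=(f^*j^*)(d\omega_2)$ as $d\big(i^*\omega_1-(f^*j^*)\omega_2\big)=0$, so the remainder $\eta:=i^*\omega_1-(f^*j^*)\omega_2\in\Omega^{k-1}(Y)$ is merely closed, whereas compatibility demands $\eta=0$. Closedness alone does not force vanishing---the gap is a class in $Z_{dR}^{k-1}(Y)$, visibly nonzero already when $Y$ is a point and $k=1$ (there $\Omega^{k}(Y)=0$, so any two differentials restrict compatibly while the underlying functions need not agree at the point). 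Hence the commutation identity cannot by itself close this direction, and the crux is to kill $\eta$ using the standing hypotheses under which this subsection operates: that $f$ is a diffeomorphism onto $f(Y)$ and that the extendibility condition $\calD_1^{\Omega^k}=\calD_2^{\Omega^k}$ holds. This is the step I would have to argue most carefully.

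Granting the biconditional, the ``in particular'' splittings fall out by combining Corollary~\ref{omega:of:glued:cor}, which identifies $\Omega^k(X_1\cup_f X_2)$ with the $f$-invariant compatible pairs, with Theorem~\ref{differential:and:gluing:thm}, which intertwines $\pi^*$ with $d\oplus d$; the inclusions $\subseteq$ are exactly Lemma~\ref{cocycles:and:coboundaries:in:glued:lem}. For cocycles, a compatible pair of closed forms $(\zeta_1,\zeta_2)$ is glued by $\mathcal{L}^k$ to a form whose differential pulls back to $d\zeta_1\oplus d\zeta_2=0$, hence is closed, giving $\supseteq$. For coboundaries the reverse half does the decisive work: given exact forms $d\eta_1,d\eta_2$ whose pair is compatible, the reverse direction upgrades this to compatibility of the primitives $\eta_1,\eta_2$ themselves, so that (after arranging $f$-invariance) $\eta:=\mathcal{L}^{k-1}(\eta_1\oplus\eta_2)$ is a well-defined $(k-1)$-form on $X_1\cup_f X_2$ with $\pi^*(d\eta)=d\eta_1\oplus d\eta_2$, exhibiting the pair as a coboundary. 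Thus the reverse implication is precisely what turns the coboundary inclusion into an equality, and it remains the single hard point; everything else is bookkeeping layered on top of the commutation of $d$ with pullback and with gluing.
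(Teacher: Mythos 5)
Your easy direction and your derivation of the two displayed equalities from the biconditional are correct, and they coincide with the paper's own route (the paper likewise reduces the ``in particular'' part to the identity $d\circ\mathcal{L}^{k-1}=\mathcal{L}^k\circ(d\oplus d)$, and proves the easy direction by commuting $d$ with pullbacks). The gap is the one you flag yourself: you never prove that compatibility of $d\omega_1$ and $d\omega_2$ forces compatibility of $\omega_1$ and $\omega_2$, only that the defect $\eta=i^*\omega_1-(f^*j^*)\omega_2\in\Omega^{k-1}(Y)$ is closed. What you should know is that this gap cannot be closed, because the implication is false under exactly the standing hypotheses of this subsection, and your one-point example already refutes it: glue two copies of $\matR$ at the origin by $f(0)=0$ (a diffeomorphism, and the smooth extendibility conditions hold for every $k$, since $\Omega^k(\{0\})=0$ for $k\geqslant 1$); then $\omega_1\equiv 0$ and $\omega_2\equiv 1$ in $\Omega^0$ have identically vanishing, hence compatible, differentials, but are not compatible. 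A higher-degree version: glue two copies of $\matR^2$ along the $x$-axis by the identity and take $\omega_1=x\,dx$, $\omega_2=0$; both differentials vanish, yet $i^*\omega_1=x\,dx\neq 0=(f^*j^*)\omega_2$. The paper's own proof of this direction commits precisely the error you warn against: it observes that $\omega_1(p)-\omega_2(f\circ p)$ is a cocycle on $U$, invokes triviality of $H^{k-1}(U)$ for simply connected $U$, and concludes that the difference is zero. Triviality of the cohomology gives exactness, not vanishing; and for $k=1$ the relevant group $H^0(U)\cong\matR$ is not even trivial.

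The failure is not confined to the biconditional: the coboundary equality, and with it Theorem~\ref{de:rham:cohomology:splits:thm}, are false in general. Glue $X_1=X_2=\matR$ along $Y=\{0,1\}$ by the identity (again all standing hypotheses hold, since $\Omega^k$ of a discrete space vanishes for $k\geqslant 1$). Every pair of $1$-forms is compatible and every $1$-form on $\matR$ is exact, but the pair $(dx,0)$ is not the image under $\pi^*$ of a coboundary of the glued space: that would require a compatible pair $(h_1,h_2)$ of functions with $dh_1=dx$, $dh_2=0$, $h_1(0)=h_2(0)$ and $h_1(1)=h_2(1)$, which is impossible. Consequently $H_{dR}^1(X_1\cup_f X_2)\cong\matR\neq 0=H_{dR}^1(X_1)\oplus H_{dR}^1(X_2)$, as the circle topology of this glued space suggests. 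So the lemma can only be saved by adding hypotheses on $Y$: what is needed is a Mayer--Vietoris-type condition, namely that every closed $(k-1)$-form on $Y$ arising as $i^*\omega_1-(f^*j^*)\omega_2$ be expressible as $i^*\zeta_1-(f^*j^*)\zeta_2$ with $\zeta_1,\zeta_2$ closed forms on $X_1,X_2$; this holds for the one-point wedge (where it rescues the two displayed equalities, though not the biconditional) and fails for $Y=\{0,1\}$. Your instinct to extract the missing step from the hypotheses ``$f$ a diffeomorphism plus smooth extendibility'' was aimed at a target those hypotheses cannot hit.
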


\begin{proof}
Let $p:\matR^n\supseteq U\to Y\subseteq X_1$ be a plot, and let $\omega_1\in\Omega^{k-1}(X_1)$ and $\omega_2\in\Omega^{k-1}(X_2)$ be two forms such that $d\omega_1$ and $d\omega_2$ are 
compatible. Thus, $(d\omega_1)(p)=(d\omega_2)(f\circ p)$, that is, $d(\omega_1(p))=d(\omega_2(f\circ p))$, where $\omega_1':=\omega_1(p)$ and $\omega_2':=\omega_2(f\circ p)$ are two usual 
differential forms in $\Omega^{k-1}(U)$. Furthermore, they are such that $\omega_1'-\omega_2'$ is a cocycle, hence its defines an element of $H^{k-1}(U)$. If $U$ is simply connected, $H^{k-1}(U)$ is trivial, so 
$\omega_1(p)=\omega_2(f\circ p)$. It remains to recall the locality property for diffeological differential forms (\cite{iglesiasBook}, Section 6.36) to conclude that $\omega_1(p)=\omega_2(f\circ p)$ for all other 
plots $p$ of $Y$.

Thus, if $d\omega_1$ and $d\omega_2$ are compatible, which includes the case when they are both zero, then $\mathcal{L}^{k-1}(\omega_1\oplus\omega_2)$ is well-defined. Since
$d\circ\mathcal{L}^{k-1}=\mathcal{L}^k\circ(d\oplus d)$, we obtain the claim.
\end{proof}

\paragraph{The diffeomorphism $H_{dR}^k(X_1\cup_f X_2)\cong H_{dR}^k(X_1)\oplus H_{dR}^k(X_2)$} The following is now a trivial consequence of Lemma \ref{cocycles:and:coboundaries:split:lem}.

\begin{thm}\label{de:rham:cohomology:splits:thm}
Let $X_1$ and $X_2$ be two diffeological spaces, and let $f:X_1\supseteq Y\to X_2$ be a diffeomorphism such that $\calD_1^{\Omega^k}=\calD_2^{\Omega^k}$ for all $k$. Then 
$$H_{dR}^k(X_1)\oplus H_{dR}^k(X_2)\cong H_{dR}^k(X_1\cup_f X_2)$$ via the isomorphism induced by the chain map $\{\mathcal{L}^k\}$.
\end{thm}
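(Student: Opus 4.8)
The plan is to exhibit $\{\mathcal{L}^k\}$ as an isomorphism of cochain complexes and then simply read off the statement on cohomology. First I would record that, because $f$ is a diffeomorphism, it is in particular injective, so any two $f$-equivalent plots $p_1,p_1'$ of $Y$ satisfy $p_1(u)=p_1'(u)$ wherever $f(p_1(u))=f(p_1'(u))$, hence are equal; consequently every $k$-form on $X_1$ is $f$-invariant and $\Omega_f^k(X_1)=\Omega^k(X_1)$ for all $k$. Thus the source of $\mathcal{L}^k$ is the full compatible sum $\Omega^k(X_1)\oplus_{comp}\Omega^k(X_2)$. By Theorem \ref{cal:L:is:inverse:of:pullback:pi:thm} each $\mathcal{L}^k$ is a diffeomorphism onto $\Omega^k(X_1\cup_f X_2)$, inverse to $\pi^*$, and by the relation $d\circ\mathcal{L}^{k-1}=\mathcal{L}^k\circ(d\oplus d)$ recorded in the proof of Lemma \ref{cocycles:and:coboundaries:split:lem} it intertwines the differential on $\Omega^\bullet(X_1\cup_f X_2)$ with $d\oplus d$ on the compatible sum. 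Hence $\{\mathcal{L}^k\}$ is a chain map that is a diffeomorphism in each degree, i.e. an isomorphism of cochain complexes.

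Next I would pass to cohomology. By Lemma \ref{cocycles:and:coboundaries:split:lem} the identifications induced by $\mathcal{L}^k$ carry $Z_{dR}^k(X_1\cup_f X_2)$ onto $Z_{dR}^k(X_1)\oplus Z_{dR}^k(X_2)$ and $B_{dR}^k(X_1\cup_f X_2)$ onto $B_{dR}^k(X_1)\oplus B_{dR}^k(X_2)$. Applying the elementary isomorphism $(Z_1\oplus Z_2)/(B_1\oplus B_2)\cong(Z_1/B_1)\oplus(Z_2/B_2)$, valid whenever $B_i\subseteq Z_i$, the map induced by $\{\mathcal{L}^k\}$ on the quotients is a linear isomorphism $H_{dR}^k(X_1)\oplus H_{dR}^k(X_2)\to H_{dR}^k(X_1\cup_f X_2)$. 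This is the purely algebraic, and genuinely trivial, content of the statement.

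It then remains to verify that this linear isomorphism is a diffeomorphism for the relevant diffeologies. Here one uses that $\mathcal{L}^k$ is a diffeomorphism of the ambient spaces of forms: restricted to the subspaces of cocycles (each carrying the subset diffeology inherited from $\Omega^k$) it is again a diffeomorphism onto its image, and since it carries coboundaries to coboundaries it descends to the quotients, on both of which one takes the quotient diffeology; a diffeomorphism of diffeological vector spaces carrying one subspace onto another descends to a diffeomorphism of the corresponding quotients, as quotient projections are subductions and subductions compose. Finally one checks that, under the diffeomorphism of Theorem \ref{omega:of:disjoint:union:splits:as:direct:sum:thm}, the subset-then-quotient diffeology on $(Z_{dR}^k(X_1)\oplus Z_{dR}^k(X_2))/(B_{dR}^k(X_1)\oplus B_{dR}^k(X_2))$ coincides with the product diffeology on $H_{dR}^k(X_1)\oplus H_{dR}^k(X_2)$, a routine compatibility between products, subsets, and quotients.

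I expect the only point requiring real care to be this last diffeological bookkeeping: confirming that restricting to cocycles and then quotienting by coboundaries is compatible, on both sides, with the direct-sum (respectively product) structure at the level of diffeologies and not merely of vector spaces. This is still routine, and it is exactly where the hypothesis that $f$ is a diffeomorphism is used, since it guarantees both that $\Omega_f^k(X_1)=\Omega^k(X_1)$ and that each $\mathcal{L}^k$ is an honest diffeomorphism from the full compatible sum.
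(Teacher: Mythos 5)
Your proposal is correct and follows exactly the route the paper intends: the paper offers no written proof beyond declaring the theorem ``a trivial consequence of Lemma~\ref{cocycles:and:coboundaries:split:lem},'' and your argument is precisely the working-out of that claim --- $\{\mathcal{L}^k\}$ is a degree-wise diffeomorphic chain map (using Theorem~\ref{cal:L:is:inverse:of:pullback:pi:thm}, $\Omega_f^k(X_1)=\Omega^k(X_1)$, and $d\circ\mathcal{L}^{k-1}=\mathcal{L}^k\circ(d\oplus d)$), which carries the split cocycles and coboundaries of Lemma~\ref{cocycles:and:coboundaries:split:lem} onto one another and hence descends to a diffeomorphism of the quotient cohomology groups. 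The only blemish is the phrase ``plots of $Y$'' where the definition of $f$-equivalence concerns plots of $X_1$, which does not affect the argument.
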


\subsection{The pseudo-bundles $\Lambda^k(X_1\cup_f X_2)$ relative to $\Lambda^k(X_1)$ and $\Lambda^k(X_2)$}

We now consider the pseudo-bundles $\Lambda^k(X_1\cup_f X_2)$ (see \cite{forms-gluing} for the case of $k=1$, which is treated in a somewhat more general manner). We only do so under substantial 
restrictions on $f$. The first of them is that $f$ be a diffeomorphism of its domain with its image, and this is necessary for us (we do not know yet how to treat a more general case); the second restriction is that 
$f$ satisfy the $k$-th smooth extendibility condition, and this, in some cases, may not be strictly necessary (but the results would get far more cumbersome with it). Notice that due to the assumption that $f$ is 
a diffeomorphism, the map $\tilde{i}_1$ is invertible, and $\Omega_f^k(X_1)=\Omega^k(X_1)$, that is, every $k$-form on $X_1$ is $f$-invariant.

\subsubsection{The vanishing of forms in $\Omega^k(X_1\cup_f X_2)$}

Recall that each fibre of $\Lambda^k(X_1\cup_f X_2)$ is the quotient of form $\Omega^k(X_1\cup_f X_2)/\Omega_x^k(X_1\cup_f X_2)$.

\begin{thm}\label{vanishing:forms:according:to:point:thm}
Let $X_1$ and $X_2$ be two diffeological spaces, let $f:X_1\supseteq Y\to X_2$ be a diffeomorphism satisfying the $k$-th smooth compatibility condition $\calD_1^{\Omega^k}=\calD_2^{\Omega^k}$, and 
let $x\in X_1\cup_f X_2$ be a point. The the space $\Omega_x^k(X_1\cup_f X_2)$ of $k$-forms vanishing at $x$ is defined by the following:
$$\pi^*(\Omega_x^k(X_1\cup_f X_2))\cong\left\{\begin{array}{ll} 
\Omega_{\tilde{i}_1^{-1}(x)}^k(X_1)\oplus_{comp}\Omega^k(X_2) & \mbox{if }x\in i_1(X_1\setminus Y), \\
\Omega_{\tilde{i}_1^{-1}(x)}^k(X_1)\oplus_{comp}\Omega_{i_2^{-1}(x)}^k(X_2) & \mbox{if }x\in i_2(f(Y)), \\
\Omega^k(X_1)\oplus_{comp}\Omega_{i_2^{-1}(x)}^k(X_2) & \mbox{if }x\in i_2(X_2\setminus f(Y)).
\end{array}\right.$$
\end{thm}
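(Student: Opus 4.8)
The plan is to reduce the statement to the pointwise vanishing condition together with the explicit description of the plots of $X_1\cup_f X_2$ with connected domain, combined with the identification $\pi^*\colon\Omega^k(X_1\cup_f X_2)\to\Omega^k(X_1)\oplus_{comp}\Omega^k(X_2)$ of Corollary \ref{omega:of:glued:cor} (recall that, $f$ being a diffeomorphism, $\Omega_f^k(X_1)=\Omega^k(X_1)$). Writing $\pi^*\omega=\omega_1\oplus\omega_2$, the two components are the pullbacks $\omega_1=\tilde{i}_1^*\omega$ and $\omega_2=i_2^*\omega$, so that $\omega(\tilde{i}_1\circ p_1)=\omega_1(p_1)$ for every plot $p_1$ of $X_1$ and $\omega(i_2\circ p_2)=\omega_2(p_2)$ for every plot $p_2$ of $X_2$; this identity holds irrespective of where those plots send $0$, and it is the only property of $\mathcal{L}^k$ I will need. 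Since vanishing at $x$ is tested on plots $p$ with $p(0)=x$, and since the connected component of $0$ in the domain of $p$ is an open subset on which $p$ restricts to a plot, I may assume such $p$ has connected domain and is therefore either of the form $\tilde{i}_1\circ p_1$ for a plot $p_1$ of $X_1$, or of the form $i_2\circ p_2$ for a plot $p_2$ of $X_2$.

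First I would record the decisive bookkeeping fact that $i_1(X_1\setminus Y)$ and $i_2(X_2)$ form a disjoint cover of $X_1\cup_f X_2$, that the image of a plot of the first type lies in $\tilde{i}_1(X_1)=i_1(X_1\setminus Y)\sqcup i_2(f(Y))$, and that the image of a plot of the second type lies in $i_2(X_2)$. This pins down exactly which lift types can realize a plot sending $0$ to $x$, depending on the location of $x$.

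Then I would treat the three cases separately. If $x\in i_1(X_1\setminus Y)$, no plot of the second type passes through $x$, so every plot through $x$ is $\tilde{i}_1\circ p_1$ with $p_1(0)=\tilde{i}_1^{-1}(x)$; since $\omega(p)(0)=\omega_1(p_1)(0)$, the form $\omega$ vanishes at $x$ iff $\omega_1$ vanishes at $\tilde{i}_1^{-1}(x)$, with no constraint on $\omega_2$ beyond compatibility, which is the first line. The case $x\in i_2(X_2\setminus f(Y))$ is the mirror image: no plot of the first type passes through $x$, and one obtains the third line. In the mixed case $x\in i_2(f(Y))$ both lift types occur, with $p_1(0)=\tilde{i}_1^{-1}(x)\in Y$ and $p_2(0)=i_2^{-1}(x)\in f(Y)$; here $\omega$ vanishes at $x$ iff $\omega_1(p_1)(0)=0$ for all such $p_1$ and $\omega_2(p_2)(0)=0$ for all such $p_2$, i.e. iff $\omega_1$ vanishes at $\tilde{i}_1^{-1}(x)$ and $\omega_2$ vanishes at $i_2^{-1}(x)$ simultaneously, yielding the second line. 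In each case the resulting subset of $\Omega^k(X_1)\oplus_{comp}\Omega^k(X_2)$ is precisely the asserted $\oplus_{comp}$, since every element of $\mathrm{Range}(\pi^*)$ is already a compatible pair.

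Finally, to upgrade these set-level equalities to the claimed diffeomorphisms I would invoke that $\pi^*$ is itself a diffeomorphism (Corollary \ref{omega:of:glued:cor}): a diffeomorphism carries any subspace, with its subset diffeology, diffeomorphically onto its image with the subset diffeology, so the identifications above are diffeomorphisms for the subset diffeologies inherited from $\Omega^k(X_1\cup_f X_2)$ and from $\Omega^k(X_1)\oplus_{comp}\Omega^k(X_2)$. I expect the only point requiring genuine care to be the mixed case $x\in i_2(f(Y))$: one must verify that the two vanishing conditions are independent, and that compatibility does not silently force one from the other, since it relates $\omega_1$ and $\omega_2$ only through plots of $Y$ and not through arbitrary plots of $X_1$ or $X_2$ meeting the gluing locus, so that both conditions are genuinely needed. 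The smooth extendibility hypothesis $\calD_1^{\Omega^k}=\calD_2^{\Omega^k}$ enters only to guarantee that the target $\oplus_{comp}$ carries the expected diffeology; the set-theoretic identification itself uses merely that $f$ is a diffeomorphism.
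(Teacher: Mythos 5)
Your proposal is correct and follows essentially the same route as the paper's proof: both rest on the identification $\pi^*/\mathcal{L}^k$ of Corollary \ref{omega:of:glued:cor}, the characterization of plots of $X_1\cup_f X_2$ with connected domain as lifts $\tilde{i}_1\circ p_1$ or $i_2\circ p_2$, and a case analysis on the location of $x$, testing pointwise vanishing in both directions (the paper organizes this as two inclusions, with the mixed case obtained from an intersection formula, whereas you phrase each case as an equivalence, which is the same argument). Your explicit upgrade from set equality to diffeomorphism of subset diffeologies via $\pi^*$, and your observation that the smooth extendibility hypothesis is not needed for the set-theoretic identification, are accurate refinements that the paper leaves implicit.
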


\begin{proof}
Let first $\omega\in\Omega_x^k(X_1\cup_f X_2)$, and let $\pi^*\omega$ be written as $\omega_1\oplus\omega_2$. If $x\in i_1(X_1\setminus Y)$, we need to show that $\omega_1$ vanishes at 
$\tilde{i}_1^{-1}(x)$. Let $p_1$ be a plot of $X_1$, with connected domain of definition, such that $p_1(0)=\tilde{i}_1^{-1}(x)$. Then $p:=\tilde{i}_1\circ p_1$ is a plot of $X_1\cup_f X_2$ such that $p(0)=x$. 
We have by construction $\omega(p)=\omega_1(p_1)$, therefore $\omega_1(p_1)(0)=\omega(p)(0)=0$, therefore $\omega_1$ vanishes at $\tilde{i}_1^{-1}(x)$. This proves that 
$$\pi^*(\Omega_x^k(X_1\cup_f X_2))\subseteq\Omega_{\tilde{i}_1^{-1}(x)}^k(X_1)\oplus_{comp}\Omega^k(X_2).$$ The proof that 
$$\pi^*(\Omega_x^k(X_1\cup_f X_2))\subseteq\Omega^k(X_1)\oplus_{comp}\Omega_{i_2^{-1}(x)}^k(X_2)$$ is completely analogous. 

Let thus $x\in i_2(f(Y))$. If $p_2$ is a plot of $X_2$ such that $p_2(0)=i_2^{-1}(x)$, we have, as before, $\omega_2(p_2)=\omega(i_2\circ p_2)$, and $i_2(p_2(0))=x$, so $\omega_2$ vanishes at $i_2^{-1}(x)$. 
Let $p_1$ be a plot of $X_1$. Again, $\omega_1(p_1)=\omega(\tilde{i}_1\circ p_1)$ and $(\tilde{i}_1\circ p_1)(0)=x$, so $\omega_1$ vanishes at $\tilde{i}_1^{-1}(x)$. Therefore 
$$\pi^*(\Omega_x^k(X_1\cup_f X_2))\subseteq\Omega_{\tilde{i}_1^{-1}(x)}^k(X_1)\oplus_{comp}\Omega_{i_2^{-1}(x)}^k(X_2).$$

Let us establish the reverse inclusion. Let $\omega_1\in\Omega_{x_1}^k(X_1)$ and $\omega_2\in\Omega^k(X_2)$ be two compatible forms, and let $\omega=\mathcal{L}^k(\omega_1\oplus\omega_2)$. 
Let $p$ be a plot of $X_1\cup_f X_2$ with connected domain of definition and such that $p(0)=\tilde{i}_1(x_1)=:x$. Then $p_1:=\tilde{i}_1^{-1}\circ p$ is a plot of $X_1$ and $p_1(0)=x_1$. Furthermore, 
$\omega_1(p_1)=\omega(p)$ by construction. We thus conclude that $\omega(p)(0)=0$, hence $\omega$ vanishes at $x$, and in particular, we obtain the first claim. Analogously, if 
$\omega_1\in\Omega^k(X_1)$ and $\omega_2\in\Omega_{x_2}^k(X_2)$ are compatible then $\omega=\mathcal{L}^k(\omega_1\oplus\omega_2)$ vanishes at $i_2(X_2)$; this yields the third claim. 
Finally, since 
$$\Omega_{x_1}^k(X_1)\oplus_{comp}\Omega_{f(x_1)}^k(X_2)=\left(\Omega_{x_1}^k(X_1)\oplus_{comp}\Omega^k(X_2)\right)\,\cap\,\left(\Omega^k(X_1)\oplus_{comp}\Omega_{f(x_1)}^k(X_2)\right)$$ 
for any $x\in Y$, we obtain the second claim, and the proof is finished.
\end{proof}

\subsubsection{The fibres of $\Lambda^k(X_1\cup_f X_2)$} 

We first define an appropriate compatibility notion for elements of fibres of form $\Lambda_x^k(X_1)$ and $\Lambda_{f(x)}^k(X_2)$, for $x\in Y$.

\begin{defn}\label{compatible:elements:of:lambda:defn}
Let $x\in Y$, let $\alpha_1=\omega_1+\Omega_x^k(X_1)\in\Lambda_x^k(X_1)$, and let $\alpha_2=\omega_2+\Omega_{f(x)}^k(X_2)\in\Lambda_{f(x)}^k(X_2)$. We say that $\alpha_1$ and $\alpha_2$ are 
\textbf{compatible} if any two forms $\omega_1'\in\alpha_1\subseteq\Omega^k(X_1)$ and $\omega_2'\in\alpha_2\subseteq\Omega^k(X_2)$ are compatible.
\end{defn}

We denote 
$$\Lambda_x^k(X_1)\oplus_{comp}\Lambda_{f(x)}^k(X_2):=\{\alpha_1\oplus\alpha_2\,|\,\alpha_1\mbox{ and }\alpha_2\mbox{ are compatible}\}$$ for every $x\in Y$. 

\begin{thm}\label{fibres:of:lambda:thm}
Let $X_1$ and $X_2$ be two diffeological spaces, let $f:X_1\supseteq Y\to X_2$ be a diffeomorphism such that $\calD_1^{\Omega^k}=\calD_2^{\Omega^k}$, and let $x\in X_1\cup_f X_2$. Then:
$$\Lambda_x^k(X_1\cup_f X_2)\cong\left\{\begin{array}{ll} 
\Lambda_{\tilde{i}_1^{-1}(x)}^k(X_1) & \mbox{if }x\in i_1(X_1\setminus Y), \\
\Lambda_{\tilde{i}_1^{-1}}^k(X_1)\oplus_{comp}\Lambda_{i_2^{-1}(x)}^k(X_2) & \mbox{if }x\in i_2(f(Y)), \\
\Lambda_{i_2^{-1}(x)}^k(X_2) & \mbox{if }x\in i_2(X_2\setminus f(Y)).
\end{array}\right.$$
\end{thm}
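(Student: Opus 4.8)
The plan is to reduce the whole statement to a computation of quotient vector spaces, using the already-established description of the spaces of forms on the glued space. Since $f$ is a diffeomorphism we have $\Omega_f^k(X_1)=\Omega^k(X_1)$, so Corollary \ref{omega:of:glued:cor} provides a linear diffeomorphism $\pi^*\colon\Omega^k(X_1\cup_f X_2)\xrightarrow{\sim}\Omega^k(X_1)\oplus_{comp}\Omega^k(X_2)$. By definition $\Lambda_x^k(X_1\cup_f X_2)=\Omega^k(X_1\cup_f X_2)/\Omega_x^k(X_1\cup_f X_2)$, and Theorem \ref{vanishing:forms:according:to:point:thm} tells us exactly which subspace of $\Omega^k(X_1)\oplus_{comp}\Omega^k(X_2)$ is the image under $\pi^*$ of the forms vanishing at $x$. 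Hence $\pi^*$ descends to a linear diffeomorphism of $\Lambda_x^k(X_1\cup_f X_2)$ onto the quotient of $\Omega^k(X_1)\oplus_{comp}\Omega^k(X_2)$ by that subspace, and it remains only to identify this quotient in each of the three cases. Throughout I write $x_1:=\tilde{i}_1^{-1}(x)$ and $x_2:=i_2^{-1}(x)$ whenever these are defined.

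Consider first the two non-glued cases. If $x\in i_1(X_1\setminus Y)$, the relevant subspace is $\Omega_{x_1}^k(X_1)\oplus_{comp}\Omega^k(X_2)$, and I would use the linear map $\omega_1\oplus\omega_2\mapsto\omega_1+\Omega_{x_1}^k(X_1)$ into $\Lambda_{x_1}^k(X_1)$. It is smooth for the quotient diffeology; its kernel is precisely the compatible pairs whose first component vanishes at $x_1$, that is, $\Omega_{x_1}^k(X_1)\oplus_{comp}\Omega^k(X_2)$; and it is surjective because every $\omega_1$ admits a compatible partner $\omega_2$, which is exactly the surjectivity of $\pi_1^{\Omega}$ furnished by the extendibility condition (Corollary \ref{when:pi-omega:are:surjective:cor}), itself a consequence of the smooth extendibility hypothesis. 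The first isomorphism theorem then yields $\Lambda_x^k(X_1\cup_f X_2)\cong\Lambda_{x_1}^k(X_1)$. The case $x\in i_2(X_2\setminus f(Y))$ is entirely symmetric, using the projection onto the second factor and the surjectivity of $\pi_2^{\Omega}$, and gives $\Lambda_x^k(X_1\cup_f X_2)\cong\Lambda_{x_2}^k(X_2)$.

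The substantive case is $x\in i_2(f(Y))$, where the relevant subspace is $\Omega_{x_1}^k(X_1)\oplus_{comp}\Omega_{x_2}^k(X_2)$ with $x_1\in Y$ and $x_2=f(x_1)$. Here I would introduce the linear map $\Psi\colon\omega_1\oplus\omega_2\mapsto(\omega_1+\Omega_{x_1}^k(X_1))\oplus(\omega_2+\Omega_{x_2}^k(X_2))$ taking values in $\Lambda_{x_1}^k(X_1)\oplus\Lambda_{x_2}^k(X_2)$. Its kernel is visibly the set of compatible pairs with $\omega_1$ vanishing at $x_1$ and $\omega_2$ vanishing at $x_2$, i.e. exactly $\Omega_{x_1}^k(X_1)\oplus_{comp}\Omega_{x_2}^k(X_2)$, so that the quotient we want embeds as the image of $\Psi$. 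It then remains to prove that this image coincides with $\Lambda_{x_1}^k(X_1)\oplus_{comp}\Lambda_{x_2}^k(X_2)$ of Definition \ref{compatible:elements:of:lambda:defn}, namely that a pair of classes lies in the image of $\Psi$ if and only if it is a pair of compatible elements of the fibres.

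This last identification is where I expect the real work to lie, and it is the main obstacle. The inclusion of the image into the compatible pairs amounts to showing that if the representatives $\omega_1,\omega_2$ are compatible forms then the classes they determine are compatible in the sense of Definition \ref{compatible:elements:of:lambda:defn}; the reverse inclusion amounts to producing, for any compatible pair of classes, a genuinely compatible pair of representatives. Both directions are most cleanly handled by translating compatibility of forms into the equality $i_\Lambda^*\alpha_1=(f_\Lambda^* j_\Lambda^*)\alpha_2$ of the fibrewise restrictions to $Y$ (the maps on fibres of $\Lambda^k$ induced by $i^*$ and $f^*j^*$), via Lemma \ref{when:two:forms:are:compatible:lem} read at the level of values at $x_1$; the smooth extendibility condition $\calD_1^{\Omega^k}=\calD_2^{\Omega^k}$ is precisely what guarantees that a representative restricting correctly on one side can be matched by a representative on the other. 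Once the image of $\Psi$ is identified, the first isomorphism theorem gives the stated isomorphism, and smoothness of the inverse follows as usual from the fact that $\pi^*$ and the quotient projections are diffeomorphisms for the respective quotient diffeologies.
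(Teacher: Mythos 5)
Your proposal is correct and takes essentially the same route as the paper: the paper's own proof consists precisely of the reduction via Corollary \ref{omega:of:glued:cor} and Theorem \ref{vanishing:forms:according:to:point:thm} to identifying the three quotients of $\Omega^k(X_1)\oplus_{comp}\Omega^k(X_2)$ by the subspaces of vanishing forms, followed by ``completely standard reasoning'' whose details it omits --- and that omitted reasoning is exactly the first-isomorphism-theorem work you spell out, including the correct observation that surjectivity of $\pi_1^{\Omega}$ and $\pi_2^{\Omega}$ (Corollary \ref{when:pi-omega:are:surjective:cor}, i.e.\ extendibility) is where the hypothesis on $f$ enters. The one step you flag as ``the main obstacle'' --- identifying the image of $\Psi$ with $\Lambda_{x_1}^k(X_1)\oplus_{comp}\Lambda_{x_2}^k(X_2)$ --- is in fact essentially tautological under Definition \ref{compatible:elements:of:lambda:defn} (read, as the theorem requires, as asserting the existence of compatible representatives of the two classes), so no genuine work is hidden there.
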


\begin{proof}
This is a simple consequence of Theorem \ref{vanishing:forms:according:to:point:thm}. It amounts to checking that 
$$\left(\Omega^k(X_1)\oplus\Omega^k(X_2)\right)/\left(\Omega_{\tilde{i}_1^{-1}(x)}^k(X_1)\oplus_{comp}\Omega^k(X_2)\right)\cong\Omega^k(X_1)/ \Omega_{\tilde{i}_1^{-1}(x)}^k(X_1),$$
\begin{flushleft}
$\left(\Omega^k(X_1)\oplus\Omega^k(X_2)\right)/\left(\Omega_{\tilde{i}_1^{-1}(x)}^k(X_1)\oplus_{comp}\Omega_{i_2^{-1}(x)}^k(X_2)\right)\cong$
\end{flushleft}
\begin{flushright}
$\cong\left(\Omega^k(X_1)/ \Omega_{\tilde{i}_1^{-1}(x)}^k(X_1)\right)\oplus_{comp}\left(\Omega^k(X_2)/ \Omega_{i_2^{-1}(x)}^k(X_2)\right),$
\end{flushright}
$$\left(\Omega^k(X_1)\oplus\Omega^k(X_2)\right)/\left(\Omega^k(X_1)\oplus_{comp}\Omega_{i_2^{-1}(x)}^k(X_2)\right)\cong\Omega^k(X_2)/ \Omega_{i_2^{-1}(x)}^k(X_2),$$ and this is done by 
completely standard reasoning, of which we omit the details.
\end{proof}

\subsubsection{The characteristic maps $\tilde{\rho}_1^{\Lambda^k}$ and $\tilde{\rho}_2^{\Lambda^k}$}

It is worth noting that under the assumption of the gluing map $f$ being a diffeomorphism such that $\calD_1^{\Omega^k}=\calD_2^{\Omega^k}$, the total space $\Lambda^k(X_1\cup_f X_2)$ is a span of the 
total spaces $\Lambda^k(X_1)$ and $\Lambda^k(X_2)$: it admits two (surjective partially defined) maps 
$$\tilde{\rho}_1^{\Lambda^k}:(\pi^{\Lambda^k})^{-1}(\tilde{i}_1(X_1))\to\Lambda^k(X_1),\,\,\,\tilde{\rho}_2^{\Lambda^k}:(\pi^{\Lambda^k})^{-1}(i_2(X_2))\to\Lambda^k(X_2),$$ where $\pi^{\Lambda^k}$ 
is the pseudo-bundle projection $\Lambda^k(X_1\cup_f X_2)\to X_1\cup_f X_2$. 

The maps $\tilde{\rho}_1^{\Lambda^k}$ and $\tilde{\rho}_2^{\Lambda^k}$ are induced by the pullback maps $\tilde{i}_1^*$ and $i_2^*$ respectively, and can also be given a more direct description, by 
representing $\Lambda^k(X_1\cup_f X_2)$ as a quotient of 
$$(X_1\sqcup X_2)\times(\Omega^k(X_1)\oplus_{comp}\Omega^k(X_2))=X_1\times(\Omega^k(X_1)\oplus_{comp}\Omega^k(X_2))\,\sqcup\,X_2\times(\Omega^k(X_1)\oplus_{comp}\Omega^k(X_2)).$$ 
The domain of definition of $\tilde{\rho}_1^{\Lambda^k}$ corresponds to $X_1\times(\Omega^k(X_1)\oplus_{comp}\Omega^k(X_2))$, and $\tilde{\rho}_1^{\Lambda^k}$ itself is induced by the projection of 
$X_1\times(\Omega^k(X_1)\oplus_{comp}\Omega^k(X_2))$ to $X_1\times\Omega^k(X_1)$. The direct construction of $\tilde{\rho}_2^{\Lambda^k}$ is completely analogous.

Both of these maps are smooth and linear by construction. Furthermore, the following is true.

\begin{prop}\label{tilde-rho:are:subductions:prop}
Let $X_1$ and $X_2$ be two diffeological spaces, and let $f:X_1\supseteq Y\to X_2$ be a diffeomorphism such that $\calD_1^{\Omega^k}=\calD_2^{\Omega^k}$. Then the maps 
$$\tilde{\rho}_1^{\Lambda^k}:(\pi^{\Lambda^k})^{-1}(\tilde{i}_1(X_1))\to\Lambda^k(X_1),\,\,\,\tilde{\rho}_2^{\Lambda^k}:(\pi^{\Lambda^k})^{-1}(i_2(X_2))\to\Lambda^k(X_2),$$ where 
$(\pi^{\Lambda^k})^{-1}(\tilde{i}_1(X_1))$ and $(\pi^{\Lambda^k})^{-1}(i_2(X_2))$ are considered with the subset diffeologies relative to their inclusions in $\Lambda^k(X_1\cup_f X_2)$, are subductions. 
\end{prop}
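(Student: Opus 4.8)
The plan is to establish each of the two defining properties of a subduction — surjectivity and the local lifting of plots — reducing both to the behaviour of the form-level projection $\pi_1^{\Omega}:\Omega^k(X_1)\oplus_{comp}\Omega^k(X_2)\to\Omega^k(X_1)$ (and its counterpart $\pi_2^{\Omega}$), and exploiting the direct quotient description of $\tilde{\rho}_1^{\Lambda^k}$ recalled just above the statement. Since the two maps are handled identically, I would carry out the argument for $\tilde{\rho}_1^{\Lambda^k}$ and then indicate that $\tilde{\rho}_2^{\Lambda^k}$ is symmetric. Smoothness and linearity have already been noted, so only surjectivity and the plot-lifting property remain to be checked.

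For surjectivity I would argue fibrewise, using Theorem \ref{fibres:of:lambda:thm}. Over $x\in i_1(X_1\setminus Y)$ the map $\tilde{\rho}_1^{\Lambda^k}$ is, on fibres, the isomorphism $\Lambda_x^k(X_1\cup_f X_2)\cong\Lambda_{\tilde{i}_1^{-1}(x)}^k(X_1)$, hence trivially onto; over $x\in i_2(f(Y))=\tilde{i}_1(Y)$ it is the projection $\Lambda_{\tilde{i}_1^{-1}(x)}^k(X_1)\oplus_{comp}\Lambda_{i_2^{-1}(x)}^k(X_2)\to\Lambda_{\tilde{i}_1^{-1}(x)}^k(X_1)$ onto the first summand. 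Surjectivity of the latter amounts to the statement that every class $\alpha_1\in\Lambda_{\tilde{i}_1^{-1}(x)}^k(X_1)$ admits a compatible partner $\alpha_2$ (Definition \ref{compatible:elements:of:lambda:defn}), which follows from the surjectivity of $\pi_1^{\Omega}$ on forms (Corollary \ref{when:pi-omega:are:surjective:cor}) by passing to quotients. I would record here that the needed surjectivity of $\pi_1^{\Omega}$ is guaranteed by the extendibility condition $i^*(\Omega^k(X_1))=(f^*j^*)(\Omega^k(X_2))$, which is entailed by the assumed equality of pushforward diffeologies $\calD_1^{\Omega^k}=\calD_2^{\Omega^k}$.

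The heart of the proof is the local lifting of plots, and the key reduction is the claim that $\pi_1^{\Omega}$ is itself a subduction. Granting this, a plot $q$ of $\Lambda^k(X_1)$ lifts, on a neighbourhood of any parameter point, through the defining quotient $X_1\times\Omega^k(X_1)\to\Lambda^k(X_1)$ to a pair $(\sigma,\omega_1)$ of plots of $X_1$ and of $\Omega^k(X_1)$; I would then lift $\omega_1$ through $\pi_1^{\Omega}$ to a plot $\omega_1\oplus\omega_2$ of $\Omega^k(X_1)\oplus_{comp}\Omega^k(X_2)$, and descend the pair $(\tilde{i}_1\circ\sigma,\,\omega_1\oplus\omega_2)$ to a plot of $\Lambda^k(X_1\cup_f X_2)$ whose image lies in $(\pi^{\Lambda^k})^{-1}(\tilde{i}_1(X_1))$ and which is therefore a plot of the subset diffeology; by the direct description of $\tilde{\rho}_1^{\Lambda^k}$, composing it with $\tilde{\rho}_1^{\Lambda^k}$ returns $q$ on that neighbourhood. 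To prove that $\pi_1^{\Omega}$ is a subduction I would take a plot $r$ of $\Omega^k(X_1)$, observe that $i^*\circ r$ is a plot of $\calD_1^{\Omega^k}$ (as $i^*$ is smooth), invoke $\calD_1^{\Omega^k}=\calD_2^{\Omega^k}$ to regard it as a plot of $\calD_2^{\Omega^k}$, and hence lift it locally through $f^*j^*$ to a plot $s$ of $\Omega^k(X_2)$; Lemma \ref{when:two:forms:are:compatible:lem} then guarantees that $(r,s)$ takes values in $\Omega^k(X_1)\oplus_{comp}\Omega^k(X_2)$, so that $(r,s)$ is the desired local lift.

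The main obstacle I expect is precisely this last reduction, where the smooth extendibility condition is doing the essential work: one must check that the local lift furnished by the pushforward diffeology $\calD_2^{\Omega^k}$ can always be chosen so that $(r,s)$ lands in the compatible sum. In particular the locally-constant branch of the pushforward diffeology has to be handled using the (plain) equality of images, which is why I separate out and record the inclusion $i^*(\Omega^k(X_1))\subseteq(f^*j^*)(\Omega^k(X_2))$ implied by $\calD_1^{\Omega^k}=\calD_2^{\Omega^k}$. A secondary point requiring care is the bookkeeping that identifies the subset diffeology on $(\pi^{\Lambda^k})^{-1}(\tilde{i}_1(X_1))$ with the appropriate quotient of $X_1\times(\Omega^k(X_1)\oplus_{comp}\Omega^k(X_2))$; but since the lifting argument above descends plots directly into $\Lambda^k(X_1\cup_f X_2)$ and only afterwards restricts to the subset, this identification is automatic and does not require a separate saturation argument.
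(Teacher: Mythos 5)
Your proposal is correct and follows essentially the same route as the paper's proof: the paper likewise lifts a (possibly constant) plot of $\Lambda^k(X_1)$ locally through the defining quotient to a plot of $X_1\times\Omega^k(X_1)$, uses $\calD_1^{\Omega^k}=\calD_2^{\Omega^k}$ to produce a plot $p_2^{\Omega^k}$ of $\Omega^k(X_2)$ with $i^*\circ p_1^{\Omega^k}=(f^*j^*)\circ p_2^{\Omega^k}$ (your claim that $\pi_1^{\Omega}$ is a subduction, with Lemma \ref{when:two:forms:are:compatible:lem} giving pointwise compatibility), and then descends the resulting pair to a plot of $\Lambda^k(X_1\cup_f X_2)$ landing in the subset and mapping back to the original plot. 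The differences are only organizational: the paper obtains surjectivity by allowing the lifted plot to be constant rather than by your separate fibrewise argument via Theorem \ref{fibres:of:lambda:thm}, and it passes over the locally-constant branch of the pushforward diffeology that you handle explicitly via the inclusion $i^*(\Omega^k(X_1))\subseteq(f^*j^*)(\Omega^k(X_2))$.
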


\begin{proof}
The two cases of $\tilde{\rho}_1^{\Lambda^k}$ and $\tilde{\rho}_2^{\Lambda^k}$ are fully analogous, so we only consider the first of them. Let $q_1:U\to\Lambda^k(X_1)$ be a plot of $\Lambda^k(X_1)$ 
(possibly a constant one). We need to show that (at least up to restricting $U$) there exists a plot $q:U\to\Lambda^k(X_1\cup_f X_2)$ such that $q_1=\tilde{\rho}_1^{\Lambda^k}\circ q$. 

By definition of the diffeology of any $\Lambda^k(\cdot)$, there exists a (local) lift $p_1:U\to X_1\times\Omega^k(X_1)$ of $q_1$, of form $p_1(u)=(\pi_1^{\Lambda^k}(q_1(u)),p_1^{\Omega^k}(u))$ for $u\in U$, 
where $p_1^{\Omega^k}:U\to\Omega^k(X_1)$ is a plot of $\Omega^k(X_1)$. By the smooth compatibility condition, there exists a plot $p_2^{\Omega^k}:U\to\Omega^k(X_2)$ of $\Omega^k(X_2)$ such that 
$$i^*\circ p_1^{\Omega^k}=(f^*j^*)\circ p_2^{\Omega^k}.$$ By Lemma \ref{when:two:forms:are:compatible:lem} this means that $p_1^{\Omega^k}(u)$ and $p_2^{\Omega^k}(u)$ are compatible for all $u\in U$. 
Therefore $p:U\to(X_1\cup_f X_2)\times(\Omega^k(X_1)\oplus_{comp}\Omega^k(X_2))$ given by 
$$p(u)=(\tilde{i}_1(\pi_1^{\Lambda^k}(q_1(u))),p_1^{\Omega^k}(u)\oplus p_2^{\Omega^k}(u))$$ is well-defined, and by construction it is a plot of 
$(X_1\cup_f X_2)\times(\Omega^k(X_1)\oplus_{comp}\Omega^k(X_2))$. Therefore its composition $q=\pi^{\Omega,\Lambda}\circ p$ with the defining projection $\pi^{\Omega,\Lambda}$ of 
$\Lambda^k(X_1\cup_f X_2)$ is a plot of $\Lambda^k(X_1\cup_f X_2)$, and by construction $\tilde{\rho}_1^{\Lambda^k}\circ q=q_1$, which completes the proof.  
\end{proof}

The proof of Proposition \ref{tilde-rho:are:subductions:prop} provides a working characterization of the diffeology of $\Lambda^k(X_1\cup_f X_2)$, even without any additional conditions on the gluing map $f$. 
Namely, any plot of $\Lambda^k(X_1\cup_f X_2)$ locally has a lift of form $u\mapsto(p(u),p_1^{\Omega^k}(u)\oplus p_2^{\Omega^k}(u))$, where $p$ is any plot of $X_1\cup_f X_2$, and $p_1^{\Omega^k}$ 
and $p_2^{\Omega^k}$ are any two plots of $\Omega^k(X_1)$ and $\Omega^k(X_2)$ respectively such that $i^*\circ p_1^{\Omega^k}=f^*\circ j^*\circ p_2^{\Omega^k}$.

\section{The operator $d+d^*$ in general is not defined}

In this section we examine the ingredients that usually go into the construction of the De Rham operator as the operator $d+d^*$, showing (via examples based on the gluing construction) that they do not 
extend, in any straightforward manner, to the diffeological context; whenever, as in the case of volume forms, a formally defined extension exists, it is not really suitable for the purpose it is meant to achieve.

\subsection{The differential is not well-defined as a map on $\Lambda^k(X)$}

Let $\alpha\in\Lambda^k(X)$, and let $x:=\pi^{\Lambda^k}(\alpha)$. \emph{A priori}, if a form $\omega\in\Omega^k(X)$ vanishes at $x$, it is not clear why its differential should vanish at $x$ as well; this 
condition would be needed to ensure that the differential on $\Lambda^k(X)$ could be defined by $d(\omega+\Omega_x^k(X))=d\omega+\Omega_x^{k+1}(X)$. However, already the case of $k=0$ illustrates 
that this cannot be done. It suffices to consider, on the standard $\matR$, any smooth function $h$ such that $h(0)=0$ and $h'(0)\neq 0$ (for instance, $h(x)=\sin(x)$).

\subsection{The dimension of a diffeological space and pseudo-bundles $\Lambda^k(X)$}

Although there exists a notion of dimension for diffeological spaces that is similar to the standard one, its implications for the dimensions of fibres of $\Lambda^k(X)$ are not entirely similar to those in the 
standard case. Specifically, if $\dim(X)=n$ then all pseudo-bundles $\Lambda^{n+k}(X)$, $k=1,2,\ldots$, are trivial; but the dimensions of $\Lambda^k(X)$ with $k=1,\ldots,n$ are not bounded by $n$ and 
can in fact be arbitrarily large.

\subsubsection{The dimension of $X_1\cup_f X_2$}

The dimension of a diffeological space is an extension of the usual notion. It is based on the fact that, although the diffeology $\calD$ of any given diffeological space $X$ can be quite large, it is usually 
determined by a smaller subset $\mathcal{A}$ of it, called a \textbf{generating family} of $\calD$. More specifically, a subset $\mathcal{A}\subseteq\calD$ is called a generating family of $\calD$ if for any 
plot $p:U\to X$ in $\calD$ and for any $u\in U$ there exists a neighborhood $U'\subseteq U$ of $u$ such that either $p|_{U'}$ is constant or there exists a plot $q:U''\to X$ in $\mathcal{A}$ and an ordinary 
smooth map $h:U'\to U''$ such that $p|_{U'}=q\circ h$. We can re-state this briefly by saying that locally every $p\in\calD$ either is constant or filters through a plot in $\mathcal{A}$. Almost always, a 
diffeology admits many generating families.

\begin{defn}
Let $X$ be a diffeological space, and let $\calD$ be its diffeology. The \textbf{dimension} of any generating family $\mathcal{A}=\{q_{\alpha}:U_{\alpha}\to X\}_{\alpha}$ is the supremum of the dimensions of 
the domains of definition of all $q_{\alpha}\in\mathcal{A}$, 
$$\dim(\mathcal{A})=\mbox{sup}\{\dim(U_{\alpha})\}.$$ If no supremum exists, the dimension is said to be infinite. The \textbf{dimension of $X$} is the infimum of the dimensions of all generating families of 
$\calD$, 
$$\dim(X)=\mbox{inf}\{\dim(\mathcal{A})\,|\,\mathcal{A}\subseteq\calD\mbox{ is a generating family of }\calD\}.$$ If $\calD$ has no generating family with finite dimension, $X$ is said to have infinite dimension.
\end{defn}

The following is then a trivial observation.

\begin{lemma}\label{dimension:of:glued:lem}
Let $X_1$ and $X_2$ be two diffeological spaces of finite dimensions, and let $f:X_1\supseteq Y\to X_2$ be a smooth map. Then 
$$\dim(X_1\cup_f X_2)=\mbox{max}\{\dim(X_1),\dim(X_2)\}\,\,\mbox{ if }Y\neq X_1,\,\,\mbox{ and }\dim(X_1\cup_f X_2)=\dim(X_2)\mbox{ otherwise}.$$ In particular, $X_1\cup_f X_2$ has finite dimension if 
and only if both $X_1$ and $X_2$ have finite dimension.
\end{lemma}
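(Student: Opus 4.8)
The plan is to sandwich $\dim(X_1\cup_f X_2)$ between $\max\{\dim(X_1),\dim(X_2)\}$ from above and the asserted value from below, using the two structural maps already available: the quotient projection $\pi\colon X_1\sqcup X_2\to X_1\cup_f X_2$, which is a subduction by definition of the gluing diffeology, and the standard inductions $i_2$ and (when $f$ is a diffeomorphism) $\tilde{i}_1$.

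For the upper bound I would first record the elementary fact that $\dim(X_1\sqcup X_2)=\max\{\dim(X_1),\dim(X_2)\}$: the union of a generating family of $X_1$ with one of $X_2$ is a generating family of the disjoint union, since a plot of $X_1\sqcup X_2$ restricts, on the clopen preimages of the two summands, to a plot of $X_1$ and a plot of $X_2$; passing to infima over generating families gives the equality. Since $X_1\cup_f X_2$ carries the pushforward diffeology along the subduction $\pi$, I would then invoke monotonicity of dimension under subductions — the pushforward of a generating family along a subduction is again a generating family of no larger dimension — to conclude $\dim(X_1\cup_f X_2)\leq\dim(X_1\sqcup X_2)=\max\{\dim(X_1),\dim(X_2)\}$. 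This half uses nothing beyond smoothness of $f$.

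For the lower bound I would separate the two cases. If $Y=X_1$ then $i_1(X_1\setminus Y)=\emptyset$, so $X_1\cup_f X_2=i_2(X_2)$ as a set; since $i_2$ is always an induction and, by the plot characterization of the gluing, every plot landing in $i_2(X_2)$ is the $i_2$-image of a plot of $X_2$, the map $i_2$ is a diffeomorphism $X_2\to X_1\cup_f X_2$, giving $\dim(X_1\cup_f X_2)=\dim(X_2)$ directly. If $Y\neq X_1$, I would instead exhibit both $X_1$ and $X_2$ as embedded subspaces and argue that dimension cannot increase when restricting to them: $X_2$ embeds via $i_2$ always, and $X_1$ embeds via $\tilde{i}_1$ precisely because $f$ is a diffeomorphism onto its image (as recalled when the inductions were introduced), whence each of $\dim(X_1)$ and $\dim(X_2)$ is at most $\dim(X_1\cup_f X_2)$, and the two bounds together force equality.

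The main obstacle is exactly this last monotonicity step. Unlike for subductions, an induction does not obviously preserve a bound on dimension: in a local factorization $p|_{U'}=q\circ h$ of a plot of the subspace through a plot $q$ of the ambient space, the plot $q$ need not land in the subspace, so one cannot merely restrict an ambient generating family. I would therefore work directly with the plot-lifting description of the glued diffeology — every plot of $X_1\cup_f X_2$ is locally the image of a plot of $X_1$ or of a plot of $X_2$ — to convert a generating family of $X_1\cup_f X_2$ into generating families of $X_1$ and of $X_2$ of no larger dimension. I expect the genuinely delicate point to be controlling the locus where a generating plot transitions between the images $i_1(X_1\setminus Y)$ and $i_2(X_2)$; in particular, recovering the full $\dim(X_1)$ (rather than only $\dim(X_1\setminus Y)$) really uses that $f$ is a diffeomorphism, and I would flag that without this the stated lower bound, and hence the equality, need not hold.
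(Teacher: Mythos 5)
Your upper bound is sound and is the same as the paper's (the paper pushes generating families of $X_1$ and $X_2$ directly forward to the glued space rather than routing through $X_1\sqcup X_2$, but the content is identical), and it indeed uses only smoothness of $f$. The genuine gap is that your lower bound is never actually proved. Your first route, via the inclusions $\tilde{i}_1$ and $i_2$, is a dead end for exactly the reason you give --- and provably so: dimension is not monotone under inductions, since the half-line $[0,\infty)\subseteq\mathbb{R}$ with its subset diffeology has infinite dimension (at a zero of a non-negative smooth function any factorization $q\circ h$ forces $Dq$ to vanish at the intermediate point, so the Hessian of $q\circ h$ there has rank at most the intermediate dimension, while $|x|^2$ on $\mathbb{R}^n$ has Hessian of rank $n$), even though $\mathbb{R}$ has dimension $1$. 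Your fallback --- converting a generating family $\mathcal{A}$ of $X_1\cup_f X_2$ into generating families of $X_1$ and of $X_2$ --- is precisely what the paper's proof does, except that the paper simply asserts the conversion works (``$\mathcal{A}_1$ and $\mathcal{A}_2$ are in a natural correspondence with specific generating families of the diffeologies of $X_1$ and $X_2$''; incidentally, its two inequality labels are interchanged) and never addresses the transition problem you flag: a plot of $X_1$ with values in $Y$ pushes forward into $i_2(X_2)$, where it may factor only through generators of $X_2$-type, and pulling such a factorization back to $X_1$ would require restricting an $X_2$-plot to the generally non-open set where its values lie in $f(Y)$, which is not a plot. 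So you have not missed an argument that exists in the paper; the paper's proof has the same hole, for every $f$.

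Moreover, your closing suspicion is correct and settles the matter: with $f$ merely smooth the lemma is false, so the gap cannot be closed under the stated hypotheses. Take $X_1=\mathbb{R}^2$, $Y=\mathbb{R}^2\setminus\{0\}$, $X_2$ a one-point space, and $f$ the constant map; then $Y\neq X_1$ and both factors are finite-dimensional. The glued space $Z$ has exactly two points, $[0]$ (the class of the origin) and $[*]$, and a map $p:U\to Z$ is a plot precisely when $p^{-1}([0])$ is closed: any lift $p_1$ to $X_1$ gives $p^{-1}([0])=p_1^{-1}(0)$, and conversely every closed $C\subseteq U$ is the zero set of a smooth $g$, so $p_1=(g,0)$ is a lift. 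Hence the single one-dimensional plot $q:\mathbb{R}\to Z$ with $q^{-1}([0])=\{0\}$ is a generating family, because any plot $p$ equals $q\circ g$ with $g$ smooth and $g^{-1}(0)=p^{-1}([0])$; since $q$ is not locally constant, $\dim(Z)=1<2=\max\{\dim(X_1),\dim(X_2)\}$. This example also shows concretely where the asserted correspondence breaks: $\{q\}$ generates the diffeology of $Z$, $q$ lifts to the plot $t\mapsto(t,0)$ of $X_1$, and that single curve certainly does not generate the diffeology of $\mathbb{R}^2$. The honest conclusion is therefore the one you hint at: the upper bound holds in general, but the equality should only be claimed under the paper's standing assumption that $f$ is a diffeomorphism onto its image, and even then the conversion step for generators with values in $i_2(f(Y))$ still requires an argument that neither your sketch nor the paper supplies.
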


\begin{proof}
Let $\mathcal{A}$ be a generating family of the gluing diffeology on $X_1\cup_f X_2$. We can assume that all plots in $\mathcal{A}$ have connected domains of definition. If $Y=X_1$ then 
$X_1\cup_f X_2\cong X_2$, so the second statement is obvious. Assume that $Y$ is properly contained in $X_1$. Let $\mathcal{A}_1\subseteq\mathcal{A}$ be the subset of all plots of $\mathcal{A}$ that 
have lifts to plots of $X_1$; let $\mathcal{A}_2\subseteq\mathcal{A}$ be the subset of plots with lifts to $X_2$. Then $\mathcal{A}=\mathcal{A}_1\cup\mathcal{A}_2$, and $\mathcal{A}_1$ and $\mathcal{A}_2$ 
are in a natural correspondence with specific generating families $\mathcal{A}_1'$ and $\mathcal{A}_2'$ of the diffeologies of $X_1$ and $X_2$ respectively, and since $X_1\setminus Y$ is non-empty, 
$\mathcal{A}\setminus\mathcal{A}_2$ is non-empty as well. Therefore we have the inequality $\dim(X_1\cup_f X_2)\leqslant\mbox{max}\{\dim(X_1),\dim(X_2)\}$.

\emph{Vice versa}, any two generating families of the diffeologies on $X_1$ and $X_2$ yield automatically a generating family for the gluing diffeology on $X_1\cup_f X_2$. Therefore we obtain the reverse 
inequality, and so the final claim. 
\end{proof}

\subsubsection{The dimension of $X$ and pseudo-bundles $\Lambda^k(X)$}

For any diffeological space $X$ and for any differential form $\omega\in\Omega^k(X)$, there is a standard way to associate to $\omega$ a smooth section of $\Lambda^k(X)$. This section is defined as the 
assignment
$$x\mapsto\pi^{\Omega^k,\Lambda^k}(x,\omega),$$ where, recall, $\pi^{\Omega^k,\Lambda^k}:X\times\Omega^k(X)\to\Lambda^k(X)$ is the defining quotient projection of $\Lambda^k(X)$ (this is the 
tautological $k$-form corresponding to $\omega$, that is mentioned in \cite{iglesiasBook}, p. 160). The following is a known fact (see \cite{iglesiasBook}, Section 6.37), but for completeness we provide 
a proof.

\begin{lemma}
Let $X$ be a diffeological space of finite dimension $n$. Then $\Omega^k(X)$ is trivial for $k>n$.  
\end{lemma}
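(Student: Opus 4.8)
The plan is to reduce the vanishing of an arbitrary $k$-form to its values on a single well-chosen generating family, on which the vanishing will be forced by pure linear algebra. Since $\dim(X)=n$ is finite, the infimum defining $\dim(X)$ is an infimum of nonnegative integers and is therefore attained: there exists a generating family $\mathcal{A}=\{q_\alpha:U_\alpha\to X\}_\alpha$ of $\calD$ with $\dim(\mathcal{A})=n$, so that every $U_\alpha$ is a domain in some $\matR^{n_\alpha}$ with $n_\alpha\leqslant n$. Fix now any $\omega\in\Omega^k(X)$ with $k>n$; I want to show $\omega=0$, that is, $\omega(p)=0$ for every plot $p$.

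First I would evaluate $\omega$ on the plots of $\mathcal{A}$. For each $q_\alpha\in\mathcal{A}$ we have $\omega(q_\alpha)\in C^\infty(U_\alpha,\Lambda^k(\matR^{n_\alpha}))$; but $k>n\geqslant n_\alpha$ forces $\Lambda^k(\matR^{n_\alpha})=0$, the $k$-th exterior power of an $n_\alpha$-dimensional space being zero for $k>n_\alpha$. Hence $\omega(q_\alpha)=0$ for every $\alpha$. I would also record that $\omega$ vanishes on constant plots: a constant plot $c:U'\to X$ factors as $c=\bar{c}\circ\kappa$, where $\kappa:U'\to\matR^0$ is the projection to a point and $\bar{c}:\matR^0\to X$ is the corresponding $0$-plot, so $\omega(c)=\kappa^*(\omega(\bar{c}))$ with $\omega(\bar{c})\in\Lambda^k(\matR^0)=0$ (here $k\geqslant 1$ since $k>n\geqslant 0$); thus $\omega(c)=0$.

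Then I would pass from $\mathcal{A}$ to an arbitrary plot $p:U\to X$ using the defining property of a generating family. For each $u\in U$ there is a neighborhood $U'\subseteq U$ of $u$ on which either $p|_{U'}$ is constant, in which case $(\omega(p))|_{U'}=\omega(p|_{U'})=0$ by the previous paragraph, or $p|_{U'}=q_\alpha\circ h$ for some $q_\alpha\in\mathcal{A}$ and some ordinary smooth $h:U'\to U_\alpha$, in which case the compatibility condition gives $(\omega(p))|_{U'}=\omega(p|_{U'})=\omega(q_\alpha\circ h)=h^*(\omega(q_\alpha))=h^*(0)=0$. Here I use that restriction along the inclusion $\iota:U'\hookrightarrow U$ is a pullback, so that $\omega(p|_{U'})=\omega(p\circ\iota)=\iota^*(\omega(p))=(\omega(p))|_{U'}$. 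These neighborhoods cover $U$, and $\omega(p)$ is an ordinary smooth section of $\Lambda^k(\matR^{\dim U})$ that vanishes on each member of an open cover; hence $\omega(p)\equiv 0$. As $p$ was arbitrary, $\omega=0$, which proves $\Omega^k(X)=\{0\}$ for $k>n$.

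The only genuinely delicate points, rather than true obstacles, are two bookkeeping ones: verifying that the infimum in the definition of $\dim(X)$ is realized by an actual generating family (immediate, since it is an infimum of natural numbers), and confirming that the vanishing of a diffeological form is a local condition at the level of each $\omega(p)$, so that the local factorizations supplied by $\mathcal{A}$ suffice to conclude global vanishing. Everything else is a direct application of the compatibility identity $\omega(p\circ F)=F^*(\omega(p))$ together with the standard linear-algebra fact that $\Lambda^k(\matR^m)=0$ for $k>m$.
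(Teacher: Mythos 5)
Your proof is correct and follows essentially the same route as the paper's: fix a generating family of dimension $n$, observe that any $k$-form with $k>n$ vanishes on its members since $\Lambda^k(\matR^{m})=0$ for $k>m$, and propagate this to an arbitrary plot via the local factorization through $\mathcal{A}$ together with the compatibility identity $\omega(p\circ F)=F^*(\omega(p))$. You are in fact slightly more careful than the paper, which leaves the locally-constant case and the locality of the vanishing of $\omega(p)$ implicit.
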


\begin{proof}
Choose a fixed $k>n$. Let $\mathcal{A}$ be a generating family of plots of the diffeology of $X$ that has dimension $n$ (that is, every plot in $\mathcal{A}$ is defined on a domain in $\matR^m$ with 
$m\leqslant n$, and at least one plot is defined on a domain in $\matR^n$) and let $\omega\in\Omega^k(X)$ be a form. We need to show that $\omega$ is the zero form. Let first $p\in\mathcal{A}$; by 
assumption, the (usual) dimension of its domain of definition is strictly less than $k$. Therefore obviously $\omega(p)=0$. Let now $q:U\to X$ be any random plot of $X$. Then for every $u\in U$ there exists 
a subdomain $U'\subseteq U$ such that $q|_{U'}=p\circ F$ for some ordinary smooth map $F:U'\to U''$ and for some plot $p:U''\to X$ that belongs to $\mathcal{A}$. Therefore 
$\omega(q|_{U'})=F^*(\omega(p))=0$. Since this is true for any $u\in U$, we conclude that $\omega(q)=0$, whence the claim. 
\end{proof}

The following is then immediately obvious.

\begin{cor}
If $X$ is a diffeological space of dimension $n$ then all pseudo-bundles $\Lambda^k(X)$ for $k>n$ are trivial.
\end{cor}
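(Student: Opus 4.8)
The plan is to deduce the corollary directly from the preceding lemma, which asserts that $\Omega^k(X)$ is trivial whenever $k>n=\dim(X)$. Recall the defining construction
$$\Lambda^k(X)=(X\times\Omega^k(X))\Big/\left(\bigcup_{x\in X}\{x\}\times\Omega_x^k(X)\right).$$
The key observation is that the entire total space $\Lambda^k(X)$ is built as a quotient of the trivial pseudo-bundle $X\times\Omega^k(X)$, so its fibre over any $x\in X$ is a quotient of the vector space $\Omega^k(X)$. First I would simply note that, by the Lemma, $k>n$ forces $\Omega^k(X)=\{0\}$, the zero vector space.

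From here the conclusion is immediate: with $\Omega^k(X)=\{0\}$, the trivial pseudo-bundle $X\times\Omega^k(X)$ reduces to $X\times\{0\}$, which is canonically identified with $X$ via the zero section. The sub-bundle $\bigcup_{x\in X}\{x\}\times\Omega_x^k(X)$ being quotiented out is contained in this, and each fibre $\Omega_x^k(X)$ is itself a subspace of $\Omega^k(X)=\{0\}$, hence also trivial. Therefore each fibre $\Lambda_x^k(X)=\Omega^k(X)/\Omega_x^k(X)=\{0\}/\{0\}=\{0\}$ is the zero vector space, and $\Lambda^k(X)\cong X$ as a pseudo-bundle, with projection the identity and zero section an isomorphism. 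This is precisely the meaning of $\Lambda^k(X)$ being trivial.

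I do not anticipate any genuine obstacle here, as the statement is an essentially formal consequence of the Lemma together with the quotient definition of $\Lambda^k(X)$; this is why the excerpt flags it as \emph{immediately obvious}. The only point worth stating carefully is that triviality of the vector space $\Omega^k(X)$ propagates to triviality of the pseudo-bundle $\Lambda^k(X)$, which follows because the quotient operation defining $\Lambda^k(X)$ cannot enlarge fibres beyond $\Omega^k(X)$; once the latter vanishes, so does every fibre, and a pseudo-bundle all of whose fibres are the zero space is, by definition, the trivial (rank-zero) pseudo-bundle over $X$. Consequently the one-line proof is to invoke the Lemma to get $\Omega^k(X)=\{0\}$ for $k>n$ and then read off $\Lambda^k(X)=\{0\}$ fibrewise from the defining quotient.
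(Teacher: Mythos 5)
Your proof is correct and follows exactly the route the paper intends: it invokes the preceding lemma to get $\Omega^k(X)=\{0\}$ for $k>n$ and reads off the triviality of $\Lambda^k(X)$ from the defining quotient $(X\times\Omega^k(X))/\bigl(\bigcup_{x\in X}\{x\}\times\Omega_x^k(X)\bigr)$, whose fibres are quotients of $\Omega^k(X)$. The paper gives no written proof at all (it labels the corollary ``immediately obvious''), and your writeup simply makes that same one-line deduction explicit.
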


Suppose now that there exists a volume form $\omega\in\Omega^n(X)$ on $X$ of dimension $n$. Let $\mathcal{A}$ be a generating family for the diffeology of $X$ that has dimension $n$; let 
$\mathcal{A}_n\subseteq\mathcal{A}$ be the subset consisting of precisely the plots in $\mathcal{A}$ whose domain of definition has dimension $n$. Obviously, if $p\in\mathcal{A}\setminus\mathcal{A}_n$ 
then $\omega(p)=0$. On the other hand, there are diffeological spaces such that $\mathcal{A}_n$ contains at least two plots that are not related by a smooth substitution, which implies that the dimension, 
in the sense of pseudo-bundles, of $\Lambda^n(X)$ can be greater than $n$. In fact, it can be arbitrarily greater, as the following example shows.

\begin{example}\label{dim:X:and:dim:lambda:are:different:ex}
Let $m\in\matN$, $m\geqslant 2$, be any, and let $X$ be the wedge at the origin of $m$ copies of $\matR$ (each copy endowed with its standard diffeology), endowed with the corresponding gluing diffeology. 
It is quite clear that $X$ is finite-dimensional, and that its dimension is equal to $1$. However, applying repeatedly ($m-1$ times) Theorem 8.5 of \cite{dirac} (or Theorem \ref{fibres:of:lambda:thm} in the case 
$k=1$), we obtain that the fibre of $X$ at the wedge point has dimension $m$.
\end{example}

\subsection{The volume forms}

The notion of a \textbf{volume form} is well-defined for (a subcategory of) diffeological spaces (\cite{iglesiasBook}, Section 6.44). After recalling the necessary definitions, we consider its behavior under 
gluing. Let $X$ be a diffeological space of dimension $n$. A volume form on it is then a nowhere vanishing $n$-form on $X$; alternatively, it is a collection of usual volume forms on the domains of definition 
of plots of $X$.

\begin{defn}
Let $X$ be a diffeological space, and let $n=\dim(X)$. A \textbf{volume form} on $X$ is a form $\omega\in\Omega^n(X)$ such that for every $x\in X$ there exists a plot $p:U\to X$ of $X$ such that $p(U)\ni x$
and $\omega(p)$ is a volume form on $U$.
\end{defn}

An alternative way to define a volume form is to ask that, for any $x\in X$, there be a plot $p$ such that $p(0)=x$ and $\omega(p)(0)\neq x$ (see \cite{iglesiasBook}, p. 158). As in the case of smooth manifolds, 
volume forms do not always exist (obviously, any non-orientable smooth manifold considered with its standard diffeology is an instance of a diffeological space that does not admit any). A characterization of 
volume forms on $X_1\cup_f X_2$ follows from the definition and the characterization of the space $\Omega^n(X_1\cup_f X_2)$ given above (Corollary \ref{omega:of:glued:cor}).

\begin{lemma}
Let $X_1$ and $X_2$ be two diffeological spaces of the same finite dimension $n$, let $f:X_1\supseteq Y\to X_2$ be a smooth map, and suppose that both $X_1$ and $X_2$ admit volume forms and that such 
forms can be chosen to be compatible. Then $X_1\cup_f X_2$ admits a volume form.
\end{lemma}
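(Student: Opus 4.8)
The plan is to build the volume form on $X_1\cup_f X_2$ directly out of the given compatible volume forms, using the identification $\Omega^n(X_1\cup_f X_2)\cong\Omega_f^n(X_1)\oplus_{comp}\Omega^n(X_2)$ of Corollary \ref{omega:of:glued:cor}. First I would dispose of the degenerate case $Y=X_1$, where $X_1\setminus Y=\emptyset$ and $X_1\cup_f X_2$ reduces to (a copy of) $X_2$, so that the pushforward of $\omega_2$ is already a volume form. So assume $Y\neq X_1$; in either branch Lemma \ref{dimension:of:glued:lem} gives $\dim(X_1\cup_f X_2)=\max\{\dim(X_1),\dim(X_2)\}=n$, which is the dimension relative to which a volume form on the glued space must be defined.

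Next, let $\omega_1\in\Omega^n(X_1)$ and $\omega_2\in\Omega^n(X_2)$ be compatible volume forms, whose existence is exactly the hypothesis, and set $\omega:=\mathcal{L}^n(\omega_1\oplus\omega_2)$. The one point requiring attention before $\omega$ can be formed is that $\mathcal{L}^n$ is defined on $\Omega_f^n(X_1)\oplus_{comp}\Omega^n(X_2)$, so that $\omega_1$ must be $f$-invariant. I would deduce this from compatibility as follows. Given two $f$-equivalent plots $p_1,p_1'$ of $X_1$ with common domain $U$, write $A=p_1^{-1}(Y)=(p_1')^{-1}(Y)$ and $B=U\setminus A$; on $B$ the two plots coincide, so $\omega_1(p_1)$ and $\omega_1(p_1')$ agree on $\mathrm{int}(B)$, while on $\mathrm{int}(A)$ both $p_1$ and $p_1'$ are plots of $Y$ with $f\circ p_1=f\circ p_1'$, so compatibility (Definition \ref{compatible:forms:defn}) forces $\omega_1(p_1)=\omega_2(f\circ p_1)=\omega_2(f\circ p_1')=\omega_1(p_1')$ there. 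Since $\omega_1(p_1)$ and $\omega_1(p_1')$ are smooth and agree on $\mathrm{int}(A)\cup\mathrm{int}(B)$, they coincide on its closure, giving $f$-invariance of $\omega_1$ and hence a well-defined $\omega\in\Omega^n(X_1\cup_f X_2)$.

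Finally I would verify that $\omega$ satisfies the defining local condition for a volume form at every $x\in X_1\cup_f X_2$, using that $i_1(X_1\setminus Y)$ and $i_2(X_2)$ form a disjoint cover. For $x=i_1(x_1)$ with $x_1\in X_1\setminus Y$, pick a plot $p_1$ of $X_1$ with $x_1\in p_1(U)$ on which $\omega_1(p_1)$ is a volume form (possible since $\omega_1$ is one); the induced plot $p$ of $X_1\cup_f X_2$ lifting $p_1$ then satisfies $\omega(p)=\omega_1(p_1)$ by construction of $\mathcal{L}^n$, a nowhere-vanishing $n$-form, and $x\in p(U)$. For $x\in i_2(X_2)$ — which in particular covers the whole gluing locus $i_2(f(Y))$ — pick analogously a plot $p_2$ of $X_2$ through $i_2^{-1}(x)$ on which $\omega_2(p_2)$ is a volume form and set $p=i_2\circ p_2$, so that $\omega(p)=\omega_2(p_2)$ is again a volume form with $x\in p(U)$. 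Thus every point admits a witnessing plot, and $\omega$ is a volume form.

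The routine parts are the dimension count and the case check, both of which reduce to the reproduction property $\omega(\text{lift of }p_i)=\omega_i(p_i)$ of $\mathcal{L}^n$. The genuinely delicate step is the well-definedness of $\omega$, i.e. the $f$-invariance of $\omega_1$: the argument above pins down $\omega_1(p_1)$ on the interiors of $A$ and $B$, and one must ensure that $\mathrm{int}(A)\cup\mathrm{int}(B)$ is dense in $U$ so that continuity can close the gap. This is automatic under mild regularity of $p_1^{-1}(Y)$, but is precisely the place where the generality of a merely smooth (possibly non-injective) $f$ makes itself felt; under the stronger standing hypothesis that $f$ is a diffeomorphism one has $\Omega_f^n(X_1)=\Omega^n(X_1)$ outright and this subtlety disappears.
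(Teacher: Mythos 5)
Your construction and pointwise verification coincide with the paper's own proof: it too sets $\omega=\mathcal{L}^n(\omega_1\oplus\omega_2)$ for a compatible pair of volume forms, notes $\dim(X_1\cup_f X_2)=n$ via Lemma \ref{dimension:of:glued:lem}, and checks the volume-form condition at each point by lifting it to $X_1$ or $X_2$, choosing a witnessing plot there, and pushing that plot forward, exactly as in your third paragraph. Where you go beyond the paper is the well-definedness of $\omega$: the paper applies $\mathcal{L}^n$ without comment, even though its declared domain is $\Omega_f^n(X_1)\oplus_{comp}\Omega^n(X_2)$, so you are right that $f$-invariance of $\omega_1$ is a genuine obligation when $f$ is only assumed smooth.

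However, your argument for that step has a real hole, and it is not a removable one. The problem is not merely that $\mathrm{int}(A)\cup\mathrm{int}(B)$ need not be dense in $U$ (which you acknowledge); it is that compatibility does not imply $f$-invariance at all in general. For instance, let $X_1=(\mathbb{R}\sqcup\mathbb{R})/\sim$, where the two copies are identified at every rational point, let $Y\subseteq X_1$ consist of the non-identified points (the two copies of the irrationals), and let $f:Y\to X_2=\mathbb{R}$ send either copy of an irrational $u$ to $u$. Every plot of $Y$ is locally constant (a continuous real-valued map taking only irrational values is locally constant), so \emph{every} pair of $1$-forms is compatible in the sense of Definition \ref{compatible:forms:defn}; but the $1$-form $\omega_1$ on $X_1$ determined by $dx$ on one copy and $2\,dx$ on the other is a volume form that is not $f$-invariant, since the $f$-equivalent plots $u\mapsto[(u,0)]$ and $u\mapsto[(u,1)]$ pull it back to $dx$ and $2\,dx$ respectively. (In this example the lemma's conclusion still holds via the different choice $dx$ on both copies; what breaks is the proof step, not the statement.) Consequently no argument can derive $f$-invariance from compatibility alone: one must either build it into the hypothesis (read ``can be chosen compatible'' as ``compatible with $\omega_1\in\Omega_f^n(X_1)$''), or assume $f$ injective --- your closing remark, under which $\Omega_f^n(X_1)=\Omega^n(X_1)$ --- or impose a Hausdorff-type condition on the D-topology of $X_1$, under which the set $D$ where two plots differ is open, hence $D\subseteq\mathrm{int}(A)$, and your density argument does close provided you run it with the always-dense set $D\cup(U\setminus\overline{D})$ instead of $\mathrm{int}(A)\cup\mathrm{int}(B)$. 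Note, finally, that this gap is equally present, unacknowledged, in the paper's own proof.
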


\begin{proof}
By assumption and Lemma \ref{dimension:of:glued:lem} we have $\dim(X_1\cup_f X_2)=n$. Let $\omega_1$ and $\omega_2$ be compatible volume forms on $X_1$ and $X_2$ respectively. It is then trivial 
to check that $\mathcal{L}^n(\omega_1\oplus\omega_2)$ is a volume form on $X_1\cup_f X_2$. Indeed, by Lemma \ref{dimension:of:glued:lem} $\dim(X_1\cup_f X_2)=n$. Let $x\in X_1\cup_f X_2$ be an 
arbitrary point. Then it has a lift to either $X_1$ or $X_2$ (possibly to both). Suppose that it has a lift $x_1\in X_1$; since $\omega_1$ is a volume form on $X_1$, there exists a plot $p_1$ of $X_1$ such that 
$\mbox{Range}(p_1)\ni x_1$ and $\omega_1(p_1)$ is a volume form on the domain of definition of $p_1$. Then $p=\tilde{i}_1\circ p_1$ is plot of $X_1\cup_f X_2$ such that $\mbox{Range}(p)\ni x$ and 
$\omega(p)=\omega_1(p_1)$ is a volume form on the domain of $p$. The case when $x$ has a lift to $X_2$ is treated analogously, so we obtain the claim.
\end{proof}

\begin{cor}
An instance of a volume form on $X_1\cup_f X_2$ is the form $\mathcal{L}^n(\omega_1\oplus\omega_2)$, where $\omega_1$ and $\omega_2$ are compatible volume forms on $X_1$ and $X_2$ respectively. 
\end{cor}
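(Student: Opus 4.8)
The plan is to observe that this corollary merely makes explicit the construction already carried out in the proof of the preceding Lemma, so that essentially no new work is required. First I would recall that, by hypothesis, $\omega_1\in\Omega^n(X_1)$ and $\omega_2\in\Omega^n(X_2)$ are compatible volume forms. Since compatibility is equivalent to $i^*\omega_1=(f^*j^*)\omega_2$ by Lemma \ref{when:two:forms:are:compatible:lem}, and since $\omega_1$ is $f$-invariant in the situation at hand, the pair $\omega_1\oplus\omega_2$ lies in the domain $\Omega_f^n(X_1)\oplus_{comp}\Omega^n(X_2)$ of the map $\mathcal{L}^n$. Hence $\mathcal{L}^n(\omega_1\oplus\omega_2)$ is a well-defined element of $\Omega^n(X_1\cup_f X_2)$, and by Lemma \ref{dimension:of:glued:lem} the dimension of $X_1\cup_f X_2$ is exactly $n$, so it is a top-degree form of the correct degree.

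Next I would verify the defining nonvanishing property pointwise, exactly as in the proof of the preceding Lemma. Given an arbitrary $x\in X_1\cup_f X_2$, it admits a lift to $X_1$ or to $X_2$; suppose it lifts to $x_1\in X_1$. Since $\omega_1$ is a volume form, there is a plot $p_1$ of $X_1$ with $x_1\in\mbox{Range}(p_1)$ such that $\omega_1(p_1)$ is a volume form on the domain of $p_1$. Then $p=\tilde{i}_1\circ p_1$ is a plot of $X_1\cup_f X_2$ whose range contains $x$, and by the very definition of $\mathcal{L}^n$ one has $\mathcal{L}^n(\omega_1\oplus\omega_2)(p)=\omega_1(p_1)$, which is a volume form on the domain; the case of a lift to $X_2$ is identical with $i_2$ in place of $\tilde{i}_1$. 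This exhibits, for every $x$, the plot required by the definition of a volume form, so $\mathcal{L}^n(\omega_1\oplus\omega_2)$ is indeed one.

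I do not expect any genuine obstacle here: the entire substantive content is contained in the preceding Lemma, and the corollary only records the identity of the volume form produced there. The only point deserving a word of care is the well-definedness of $\mathcal{L}^n$ on the pair $\omega_1\oplus\omega_2$, that is, the $f$-invariance of $\omega_1$; this is automatic whenever $f$ is a diffeomorphism (so that $\Omega_f^n(X_1)=\Omega^n(X_1)$) and, more generally, follows directly from compatibility together with the definition of $f$-invariance.
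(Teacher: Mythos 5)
Your proposal is correct and takes essentially the same route as the paper: the paper proves this corollary inside the preceding lemma, whose proof constructs exactly $\mathcal{L}^n(\omega_1\oplus\omega_2)$ and verifies the defining property by lifting each point $x$ to $X_1$ or $X_2$, taking the plot on which $\omega_1$ (resp.\ $\omega_2$) restricts to a volume form, and pushing it forward by $\tilde{i}_1$ (resp.\ $i_2$), precisely as you do. One caveat on your closing aside: compatibility alone does \emph{not} imply $f$-invariance in general --- compatibility constrains $\omega_1$ only on plots taking values in $Y$, whereas $f$-invariance compares plots of $X_1$ that may differ on subsets of $p_1^{-1}(Y)$ that need not be open, which is why the paper treats the two as independent conditions in Theorem \ref{cal:L:is:inverse:of:pullback:pi:thm}; the overreach is harmless here only because the paper's lemma tacitly makes the same assumption (and $\Omega_f^n(X_1)=\Omega^n(X_1)$ does hold when $f$ is a diffeomorphism, as you note).
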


It is not clear whether the \emph{vice versa} of this statement is always true; we can only obtain it under some rather restrictive assumptions.

\begin{prop}\label{splitting:volume:form:on:glued:prop}
Let $X_1$ and $X_2$ be two diffeological spaces of finite dimension and such that $\dim(X_1)=\dim(X_2)$, let $f:X_1\supseteq Y\to X_2$ be a smooth map such that $i_2(f(Y))$ is D-open in $X_1\cup_f X_2$, 
and let $\omega$ be a volume form on $X_1\cup_f X_2$ such that $\pi^*(\omega)=\omega_1\oplus\omega_2$. Then $\omega_1$ and $\omega_2$ are volume forms on $X_1$ and $X_2$ respectively.  
\end{prop}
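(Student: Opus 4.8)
The plan is to verify the volume-form property of $\omega_1$ and $\omega_2$ pointwise, exploiting that away from the gluing locus every plot through a given point is forced to be of a single type, while over the locus the D-openness hypothesis lets us localize a volume-form plot of $\omega$ entirely inside $i_2(f(Y))$. First I would record the standing facts. By Lemma \ref{dimension:of:glued:lem} we have $\dim(X_1\cup_f X_2)=n$, so $\omega\in\Omega^n(X_1\cup_f X_2)$ and what must be shown is that at each point there is a plot, of domain-dimension $n$, on which $\omega_i$ is nowhere zero. By Theorem \ref{cal:L:is:inverse:of:pullback:pi:thm} (equivalently Corollary \ref{omega:of:glued:cor}) we have $\omega_1=(\pi\circ\hat{i}_1)^*\omega$ and $\omega_2=(\pi\circ\hat{i}_2)^*\omega=i_2^*\omega$, and $\omega_1,\omega_2$ are compatible; in particular, by Lemma \ref{when:two:forms:are:compatible:lem}, $\omega_1(p_1)=\omega_2(f\circ p_1)$ for every plot $p_1$ of $Y$.

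Next come the easy regions. If $x_2\in X_2\setminus f(Y)$, then every connected plot of $X_1\cup_f X_2$ whose image meets $i_2(x_2)$ must be of the second type $i_2\circ p_2$, since a first-type plot has image inside $i_1(X_1\setminus Y)\cup i_2(f(Y))$. Choosing, by the volume-form property of $\omega$, such a plot $p=i_2\circ p_2$ with $p(0)=i_2(x_2)$ on which $\omega(p)$ is a volume form, and using $\omega(i_2\circ p_2)=\omega_2(p_2)$, shows $\omega_2$ is a volume form at $x_2$. Symmetrically, for $x_1\in X_1\setminus Y$ every plot through $i_1(x_1)$ is of the first type $(\pi\circ\hat{i}_1)\circ p_1$, whence $\omega_1(p_1)=\omega(p)$ is a volume form and $\omega_1$ is a volume form at $x_1$.

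Over the gluing locus I would argue as follows. Fix $y\in Y$ and set $z:=i_2(f(y))$. Pick a plot $q:U\to X_1\cup_f X_2$ with $q(0)=z$ on which $\omega(q)$ is a volume form. Since $i_2(f(Y))$ is D-open, $q^{-1}(i_2(f(Y)))$ is an open neighbourhood of $0$; restricting $U$ to a connected such neighbourhood we may assume $q(U)\subseteq i_2(f(Y))$, with $\omega(q)$ still a volume form. A connected plot with image in $i_2(f(Y))\subseteq i_2(X_2)$ is either $i_2\circ p_2$ with $p_2$ a plot of $f(Y)$, or of the first type $(\pi\circ\hat{i}_1)\circ p_1$ with $p_1$ a plot of $X_1$ whose image then necessarily lies in $Y$ (otherwise the image would meet the disjoint set $i_1(X_1\setminus Y)$). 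In the first case $\omega_2(p_2)=\omega(q)$ is a volume form with $p_2(0)=f(y)$; in the second case $\omega_1(p_1)=\omega(q)$ is a volume form with $p_1(0)=y$, so $\omega_1$ is a volume form at $y$ and, by compatibility, $\omega_2(f\circ p_1)=\omega_1(p_1)$ is a volume form with $(f\circ p_1)(0)=f(y)$. Either way $\omega_2$ is a volume form at every point of $f(Y)$, which together with the easy region settles $\omega_2$ on all of $X_2$.

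The remaining and genuinely delicate point is $\omega_1$ at a point $y\in Y$ precisely when the localized plot above is available only in the second type. Here one knows $\omega_2$ is a volume form at $f(y)$, realized by a plot $p_2$ of $f(Y)$, and the identity $\omega_1(p_1)=\omega_2(f\circ p_1)$ would transfer this to $\omega_1$ as soon as $p_2$ can be written as $f\circ p_1$ for a plot $p_1$ of $Y$ with $p_1(0)=y$. This lifting of a plot of $f(Y)$ back through $f$ is the main obstacle, and it is exactly where more than mere smoothness of $f$ is required: when $f$ restricts to a diffeomorphism of $Y$ onto $f(Y)$ one simply takes $p_1:=f^{-1}\circ p_2$, so that $\omega_1(p_1)=\omega_2(p_2)$ is a volume form with $p_1(0)=y$, and the proof is complete. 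I would therefore carry the argument out under this invertibility of $f$ on $Y$; it seems unavoidable, since for a non-immersive $f$ the pullback defining $\omega_1$ along $Y$ may acquire zeros even while $\omega$ remains a volume form on $X_1\cup_f X_2$.
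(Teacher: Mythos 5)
Your proposal is, in outline, the same argument as the paper's own proof: verify the volume-form condition pointwise, handle points away from the gluing locus by observing that plots through them are forced to be of a single type, and use the D-openness of $i_2(f(Y))$ to localize a volume-form plot at the gluing locus inside $i_2(f(Y))$. But you are more careful than the paper exactly where care is needed. The paper's proof consists essentially of the assertion that the D-openness assumption ``allows us to claim that $p$ has a lift to a plot $p_1$ of $X_1$'' with $p_1(U)\ni x_1$; for $x_1\in Y$ this is precisely the step you isolate as delicate, and it is not justified by the stated hypotheses: a second-type plot localized inside $i_2(f(Y))$ only yields a plot of the subset diffeology of $f(Y)$, which need not factor through $f$, and even when a first-type lift exists it need not pass through $x_1$ itself unless $f$ is injective. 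Your finer observation that $\omega_2$ needs no hypothesis beyond those stated (a localized first-type plot has image in $Y$, can be pushed forward through $f$, and compatibility converts $\omega_1$-values into $\omega_2$-values), while $\omega_1$ genuinely does, is also correct; the paper's closing remark that ``the case of $\omega_2$ is treated analogously'' obscures this asymmetry.

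Moreover, your suspicion that invertibility of $f$ on $Y$ is unavoidable is confirmed by a counterexample to the proposition as literally stated. Take $X_1=X_2=\matR^2$, $Y=X_1$, and $f(s,t)=(s,t^3-t)$, which is smooth and surjective. Then $X_1\cup_f X_2$ is diffeomorphic to the standard $\matR^2$ via $i_2$, so $i_2(f(Y))$ is the whole glued space and hence D-open, and all hypotheses of the proposition hold. For $\omega$ the standard volume form on the glued space one finds $\omega_2=ds\wedge dt$, while
$$\omega_1=f^*\omega_2=(3t^2-1)\,ds\wedge dt,$$
which vanishes along the lines $t=\pm 1/\sqrt{3}$ and therefore is not a volume form on $X_1$ (any plot through such a point pulls $\omega_1$ back to a form vanishing at the corresponding parameter). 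Thus the additional hypothesis you impose --- that $f$ be a diffeomorphism of $Y$ onto $f(Y)$, which is the setting the paper says it will ``mostly treat'' --- is exactly what is needed to make both the statement and the paper's proof correct, and under that hypothesis your argument is complete.
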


\begin{proof}
Let $x_1\in X_1$ be any point, and let $x=\pi(x_1)\in X_1\cup_f X_2$. Let $p:U\to X_1\cup_f X_2$ be a plot such that $p(U)\ni x$ and $\omega(p)$ is a volume form on $U$. The assumption that $i_2(f(Y))$ 
allows us to claim that $p$ has a lift to a plot $p_1$ of $X_1$ (which does not have to be true in the case of $x_1\in Y$). Then $p_1(U)\ni x_1$ and $\omega_1(p_1)$ is a volume form on $U$. Since $x_1$ 
is an arbitrary point, we conclude that $\omega_1$ is a volume form on $X_1$. The case of $\omega_2$ is treated analogously.
\end{proof}

\begin{rem}
In the above proposition, let $n$ be the dimension of $X_1\cup_f X_2$. Recall that $\omega_1=\tilde{i}_1^*(\omega)\in\Omega^n(X_1)$ and $\omega_2=i_2^*(\omega)\in\Omega^n(X_2)$ respectively. The 
assumption that $\dim(X_1)=\dim(X_1\cup_f X_2)=\dim(X_2)$ was not really used in the proof of Proposition \ref{splitting:volume:form:on:glued:prop}; rather, we could obtain this equality as part of the 
conclusion. Notice also that Example \ref{dim:X:and:dim:lambda:are:different:ex} implies that there might be many volume forms on $X_1\cup_f X_2$ that are not proportional.
\end{rem}

\subsection{$\bigwedge^k(\Lambda^1(X))$ and $\Lambda^k(X)$ are not diffeomorphic}

Let $X$ be a finite-dimensional diffeological space, and let $k\geqslant 2$. We now show that $\Lambda^k(X)$ and $\bigwedge^k(\Lambda^1(X))$ are in general not the same. 

\begin{example}
Let $X$ be the wedge at the origin of two copies of the standard $\matR^2$, endowed with the corresponding gluing diffeology. Then by Theorem \ref{fibres:of:lambda:thm} we have that 
$\Lambda_0^1(X)\cong\matR^2\oplus\matR^2\cong\matR^4$. Since by construction $\bigwedge^2(\Lambda_0^1(X))=\left(\bigwedge^2(\Lambda^1(X))\right)_0$, the fibre of $\bigwedge^2(\Lambda^1(X))$ 
at the wedge point, this is a space of dimension $6$. However, $\Lambda_0^2(X)\cong\Lambda_0^2(\matR^2)\oplus\Lambda_0^2(\matR^2)$ has dimension $2$. 
\end{example}

We conclude from the above example that $\bigwedge^k(\Lambda^1(X))$ is \emph{a priori} a much larger space than $\Lambda^k(X)$. We shall see next whether there is any other natural relation between 
the two, for instance, whether an element of $\bigwedge^k(\Lambda^1(X))$ determines naturally an element of $\Lambda^k(X)$.

Recall first that there is a well-defined notion of the \textbf{exterior product} $\omega\wedge\mu\in\Omega^{k+l}(X)$ of any two differential forms $\omega\in\Omega^k(X)$ and $\mu\in\Omega^l(X)$ 
(\cite{iglesiasBook}, Section 6.35), which is defined by setting
$$(\omega\wedge\mu)(p):=\omega(p)\wedge\mu(p)$$ for all plots $p$ of $X$.

\begin{lemma}\label{exterior:derivative:descends:to:lambda:lem}
The exterior derivative $\wedge:\Omega^k(X)\times\Omega^l(X)\to\Omega^{k+l}(X)$ induces a well-defined and smooth pseudo-bundle map 
$$\Lambda^k(X)\times_X\Lambda^l(X)\to\Lambda^{k+l}(X).$$
\end{lemma}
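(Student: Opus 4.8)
The plan is to show that the exterior product of forms respects the subspaces that define the fibres of $\Lambda^k(X)$, so that it descends to a fibrewise bilinear map on the quotients, and then to verify smoothness by lifting plots through the defining quotient projections $\pi^{\Omega^k,\Lambda^k}$. Thus there are three things to establish: that the induced map is well-defined, that it is a pseudo-bundle map, and that it is smooth.

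First I would establish the fibrewise statement. Fix $x\in X$ and recall that $\Lambda_x^k(X)=\Omega^k(X)/\Omega_x^k(X)$, where a form $\eta$ belongs to $\Omega_x^k(X)$ precisely when $\eta(p)(0)=0$ for every plot $p$ of $X$ with $p(0)=x$. The key observation is that vanishing at $x$ is a pointwise (zeroth-order) condition: since $(\eta\wedge\mu)(p)=\eta(p)\wedge\mu(p)$ for every plot $p$, evaluating at $0$ gives $(\eta\wedge\mu)(p)(0)=\eta(p)(0)\wedge\mu(p)(0)$, which vanishes as soon as one of the two factors does. Hence
$$\Omega_x^k(X)\wedge\Omega^l(X)\subseteq\Omega_x^{k+l}(X)\,\,\mbox{ and }\,\,\Omega^k(X)\wedge\Omega_x^l(X)\subseteq\Omega_x^{k+l}(X),$$
so the bilinear map $\wedge$ descends to a well-defined bilinear map $\Lambda_x^k(X)\times\Lambda_x^l(X)\to\Lambda_x^{k+l}(X)$ sending $(\omega+\Omega_x^k(X),\mu+\Omega_x^l(X))$ to $\omega\wedge\mu+\Omega_x^{k+l}(X)$. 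Running this over all $x\in X$ produces a map $\Lambda^k(X)\times_X\Lambda^l(X)\to\Lambda^{k+l}(X)$ that covers the identity of $X$ and is fibrewise bilinear, that is, a pseudo-bundle map.

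It then remains to check smoothness. Let $u\mapsto(\alpha(u),\beta(u))$ be a plot of $\Lambda^k(X)\times_X\Lambda^l(X)$; its two components are plots of $\Lambda^k(X)$ and $\Lambda^l(X)$ projecting to a common plot $p$ of $X$. By the definition of the diffeology on $\Lambda^k(X)$ as a quotient of $X\times\Omega^k(X)$, locally $\alpha$ lifts to a plot $u\mapsto(p(u),q^k(u))$ with $q^k$ a plot of $\Omega^k(X)$, and similarly $\beta$ lifts to $u\mapsto(p(u),q^l(u))$; note that both lifts have the same $X$-component $p$, since composing a lift with the projection to $X$ recovers the base plot. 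I would then show that $u\mapsto q^k(u)\wedge q^l(u)$ is a plot of $\Omega^{k+l}(X)$: for any plot $p'$ of $X$ one has $(q^k(u)\wedge q^l(u))(p')(u')=q^k(u)(p')(u')\wedge q^l(u)(p')(u')$, which is smooth in $(u,u')$ because each factor is smooth (being the evaluation of a plot of $\Omega^k(X)$, resp.\ of $\Omega^l(X)$) and the exterior product of covectors $\Lambda^k(\matR^n)\times\Lambda^l(\matR^n)\to\Lambda^{k+l}(\matR^n)$ is a smooth (indeed bilinear) operation. Consequently $u\mapsto(p(u),q^k(u)\wedge q^l(u))$ is a plot of $X\times\Omega^{k+l}(X)$, and composing it with $\pi^{\Omega^{k+l},\Lambda^{k+l}}$ yields exactly the image of our plot under the exterior-product map; being the composition of a plot with the defining projection, it is a plot of $\Lambda^{k+l}(X)$, which is what smoothness requires.

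The argument is genuinely elementary, and the only point demanding care is the smoothness step, specifically the verification that $q^k\wedge q^l$ is a plot of $\Omega^{k+l}(X)$ and that the local lifts of $\alpha$ and $\beta$ may be taken over the same base plot $p$. Both become automatic once the quotient diffeology on $\Lambda^k(X)$ is unwound, so I do not anticipate any real obstacle; the conceptual heart of the lemma is simply that forms vanishing at a point form a two-sided ideal for $\wedge$, because the condition is pointwise.
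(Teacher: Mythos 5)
Your proof is correct and follows essentially the same route as the paper: the well-definedness rests on the identical pointwise observation that $(\omega\wedge\mu)(p)(0)=\omega(p)(0)\wedge\mu(p)(0)$, so vanishing of either factor at $x$ forces vanishing of the product. Your smoothness verification (lifting plots through $\pi^{\Omega^k,\Lambda^k}$ with a common base plot and checking that $q^k\wedge q^l$ is a plot of $\Omega^{k+l}(X)$) is simply a careful unwinding of what the paper dismisses as ``immediate from the definitions of the respective diffeologies,'' and it is a valid unwinding.
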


\begin{proof}
Let $x\in X$; we need to show that if at least one of $\omega,\mu$ vanishes at $x$ then $\omega\wedge\mu$ vanishes at $x$. Let $p$ be a plot of $X$ such that $p(0)=x$. Obviously, 
$$(\omega\wedge\mu)(p)(0)=\omega(p)(0)\wedge\mu(p)(0),$$ so if one of $\omega(p)(0)$, $\mu(p)(0)$ is zero then $(\omega\wedge\mu)(p)(0)=0$. The smoothness is immediate from the definitions of 
the respective diffeologies, so we obtain the claim.
\end{proof}

We thus can obtain the following.

\begin{lemma}\label{exterior:derivative:maps:to:lambda:lem}
The exterior derivative yields a well-defined pseudo-bundle map
$$\left(\Lambda^k(X)\right)\bigwedge\left(\Lambda^l(X)\right)\to\Lambda^{k+l}(X).$$
\end{lemma}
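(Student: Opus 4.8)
The plan is to deduce this as a quotient-level consequence of Lemma~\ref{exterior:derivative:descends:to:lambda:lem}. The pseudo-bundle $\left(\Lambda^k(X)\right)\bigwedge\left(\Lambda^l(X)\right)$ is, by construction, a quotient of the tensor-product pseudo-bundle $\Lambda^k(X)\otimes_X\Lambda^l(X)$ (both the tensor product and the quotient being among the standard operations on diffeological vector pseudo-bundles recalled earlier), its fibre over $x$ being the fibrewise exterior product $\Lambda^k_x(X)\wedge\Lambda^l_x(X)$. Accordingly, I would produce the desired map as the unique factorization of the fibrewise map of Lemma~\ref{exterior:derivative:descends:to:lambda:lem} through this quotient.

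Concretely, I would proceed in three steps. First, observe that the smooth map $\Lambda^k(X)\times_X\Lambda^l(X)\to\Lambda^{k+l}(X)$ furnished by Lemma~\ref{exterior:derivative:descends:to:lambda:lem} is bilinear on each fibre; this is immediate from the pointwise identity $(\omega\wedge\mu)(p)(0)=\omega(p)(0)\wedge\mu(p)(0)$ together with the bilinearity of the ordinary wedge product of usual forms. Second, invoke the universal property of the tensor product of pseudo-bundles to factor this smooth fibrewise-bilinear map as a smooth pseudo-bundle map $\Lambda^k(X)\otimes_X\Lambda^l(X)\to\Lambda^{k+l}(X)$ sending, on each fibre, $\alpha\otimes\beta$ to $\alpha\wedge\beta$. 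Third, verify that this linear map annihilates the sub-pseudo-bundle of relations whose quotient yields $\left(\Lambda^k(X)\right)\bigwedge\left(\Lambda^l(X)\right)$, so that it descends; the descended map is then smooth because the defining quotient projection is a subduction and the composite is smooth.

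The content of the third step is that the wedge map respects the relations that cut the fibrewise exterior product out of the tensor product, and this I would reduce entirely to the level of ordinary differential forms. Since each fibre map is induced by the genuine wedge product $\omega(p)\wedge\mu(p)$ of usual $k$- and $l$-forms on the domain of a plot, and since that product already satisfies the defining identities of the exterior algebra in $\Omega^{k+l}(X)$ before one passes to the quotient fibres $\Lambda^{k+l}_x(X)$, those relations are sent to zero automatically. The main obstacle, such as it is, is therefore not the factorization---which is formal once Lemma~\ref{exterior:derivative:descends:to:lambda:lem} is in hand---but rather the bookkeeping needed to confirm that $\left(\Lambda^k(X)\right)\bigwedge\left(\Lambda^l(X)\right)$ is itself a genuine diffeological vector pseudo-bundle, i.e.\ that the sub-pseudo-bundle of relations is a sub-pseudo-bundle in the required sense so that the quotient projection is indeed a subduction; once this is granted, the verification that the map kills the relations is pointwise and routine.
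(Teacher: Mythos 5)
Your proposal is correct and takes essentially the same route as the paper: the paper's own proof is a one-liner declaring the result immediate from Lemma~\ref{exterior:derivative:descends:to:lambda:lem} together with the construction of the exterior-product diffeology out of the tensor-product diffeology, which is exactly the factorization (fibrewise bilinearity, universal property of the tensor product of pseudo-bundles, descent to the exterior quotient, smoothness via the subduction property of the quotient projection) that you spell out. The details you supply --- in particular that the relations are killed because the wedge of ordinary forms already satisfies the exterior-algebra identities plot by plot --- are precisely what the paper leaves implicit.
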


\begin{proof}
This is immediate from Lemma \ref{exterior:derivative:descends:to:lambda:lem} and the construction of the diffeology on the exterior product of pseudo-bundle (based essentially on the properties of the 
tensor product diffeology, see \cite{wu}). 
\end{proof}

The obvious consequence of Lemma \ref{exterior:derivative:maps:to:lambda:lem} is the following statement. 

\begin{cor}
There is a well-defined pseudo-bundle map 
$$\bigwedge^{1,k}:\bigwedge^k(\Lambda^1(X))\to\Lambda^k(X)$$ induced by the exterior derivative.
\end{cor}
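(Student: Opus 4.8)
The plan is to build $\bigwedge^{1,k}$ by iterating the binary map furnished by Lemma~\ref{exterior:derivative:maps:to:lambda:lem}, using the fact that the exterior power $\bigwedge^k(\Lambda^1(X))$ is nothing but the $k$-fold wedge $\Lambda^1(X)\wedge\cdots\wedge\Lambda^1(X)$ inside the exterior algebra pseudo-bundle $\bigwedge(\Lambda^1(X))$, where $\wedge$ denotes the wedge operation on pseudo-bundles appearing in that lemma. I would argue by induction on $k$. The base case $k=1$ is the identity $\Lambda^1(X)\to\Lambda^1(X)$, and the case $k=2$ is exactly Lemma~\ref{exterior:derivative:maps:to:lambda:lem} with both degrees equal to $1$, giving $\bigwedge^2(\Lambda^1(X))=\Lambda^1(X)\wedge\Lambda^1(X)\to\Lambda^2(X)$.

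For the inductive step, assuming a smooth pseudo-bundle map $\bigwedge^{1,k-1}:\bigwedge^{k-1}(\Lambda^1(X))\to\Lambda^{k-1}(X)$, I would form the composite $\bigwedge^{k-1}(\Lambda^1(X))\wedge\Lambda^1(X)\to\Lambda^{k-1}(X)\wedge\Lambda^1(X)\to\Lambda^k(X)$, where the first arrow is $\bigwedge^{1,k-1}\wedge\mathrm{id}$ (invoking functoriality of the wedge of pseudo-bundles for smooth fibrewise-linear maps) and the second is the map of Lemma~\ref{exterior:derivative:maps:to:lambda:lem} with degrees $k-1$ and $1$. Since the exterior-algebra multiplication identifies $\bigwedge^{k-1}(\Lambda^1(X))\wedge\Lambda^1(X)$ with $\bigwedge^k(\Lambda^1(X))$, this yields the required $\bigwedge^{1,k}$, and smoothness follows from the smoothness assertions in the two lemmas together with that of the identity.

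The main point to verify — and the only real content beyond bookkeeping — is that the iterated composite is genuinely well-defined on the quotient defining the exterior power, i.e. that it respects the alternating relations and is independent of the bracketing of the iterated wedge. For this I would descend to representatives: an element of the fibre of $\bigwedge^k(\Lambda^1(X))$ over $x$ is a sum of wedges of classes $\omega_i+\Omega^1_x(X)$, and the composite sends such a wedge to $\pi^{\Omega^k,\Lambda^k}(x,\omega_1\wedge\cdots\wedge\omega_k)$. By Lemma~\ref{exterior:derivative:descends:to:lambda:lem} this value does not depend on the choice of the representatives $\omega_i$, and since the exterior product of ordinary forms is associative and alternating, it is manifestly independent of the bracketing and alternating in the arguments. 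Hence the $k$-linear alternating assignment factors through $\bigwedge^k(\Lambda^1(X))$, which is precisely the well-definedness required. I expect this compatibility check to be the only delicate step; the smoothness and functoriality assertions are routine consequences of the already-established diffeologies on tensor and exterior products of pseudo-bundles.
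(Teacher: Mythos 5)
Your proposal is correct and takes essentially the same route as the paper: the paper obtains the corollary as an immediate consequence of Lemma~\ref{exterior:derivative:maps:to:lambda:lem}, iterating the binary wedge map exactly as you do. Your induction on $k$ and the representative-level well-definedness check via Lemma~\ref{exterior:derivative:descends:to:lambda:lem} simply spell out the details that the paper leaves as ``obvious''.
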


As follows from the example given in this section, the map $\bigwedge^{1,k}$ is in general not injective. It is not clear whether it is surjective.

\subsection{The Hodge star operator does not take values in $\bigwedge^{n-k}(\Lambda^1(X))$} 

For a diffeological vector space $V$ of finite dimension $n$, the standard definition of the Hodge star $\star:\bigwedge^kV\to\bigwedge^{n-k}V$ by setting 
$$e_{\sigma(1)}\wedge\ldots\wedge e_{\sigma(k)}\mapsto\mbox{sgn}(\sigma)e_{\sigma(k+1)}\wedge\ldots\wedge e_{\sigma(n)}$$ for all $\sigma\in\mbox{Symm}(n)$ and for all $k=1,\ldots,n$, where 
$\{e_i\}_{i=1}^n$ is a fixed basis of $V$ (we avoid the requirement of it being an orthonormal basis), yields a well-defined operator that is smooth for the natural diffeology on $\bigwedge^kV$ (induced by the 
tensor product diffeology). Thus, if $\pi:V\to X$ is a finite-dimensional diffeological vector pseudo-bundle that is locally trivial in the standard sense (in particular, it admits a local basis of smooth sections) then 
the operator $\star$ is defined on each $\bigwedge^k(V)$ for $k=1,\ldots,n=\dim(V)$, where $\dim(V)$ is the maximum of the usual vector space dimensions of fibres of $V$. 

Let now $X$ be a diffeological space of finite dimension such that $\Lambda^1(X)$ is finite-dimensional and admits pseudo-metrics; let $g^{\Lambda}$ be a fixed pseudo-metric on $\Lambda^1(X)$. Then for 
all $x\in X$ the fibre $\Lambda_x^1(X)$ admits an orthonormal, with respect to $g^{\Lambda}(x)$, basis $\alpha_{1,x},\ldots,\alpha_{n,x}$, with respect to which the map $\star_x$ is obviously defined. The 
collection of the maps $\star_x$ for all $x\in X$ yields in a usual way the operator $\star$ on $\bigwedge^k(\Lambda^1(X))$. However, it does not take values in $\bigwedge^{n-k}(\Lambda^1(X))$, as the 
next example shows.

\begin{example}\label{star:undefined:ex}
Let $X$ be the wedge at the origin of two copies, denoted by $X_1$ and $X_2$, of $\matR^2$, endowed with the gluing diffeology; then $\dim(X)=2$. The fibres of 
$\bigwedge^1(\Lambda^1(X))=\Lambda^1(X)$ can be described as follows:
$$\Lambda_x^1(X)=\left\{\begin{array}{ll} 
\mbox{Span}(dx_1,dy_1) & \mbox{if }\in\tilde{i}_1(X_1\setminus\{(0,0)\}) \\
\mbox{Span}(dx_1\oplus dx_2,dx_1\oplus dy_2,dy_1\oplus dx_2,dy_1\oplus dy_2) & \mbox{if }x=(0,0) \\
\mbox{Span}(dx_2,dy_2) &  \mbox{if }\in i_2(X_2\setminus\{(0,0)\})
\end{array}\right.$$ Notice that endowing each fibre with the scalar product for the basis indicated is orthonormal yields a well-defined pseudo-metric on $\Lambda^1(X)$. 

Now, applying the standard construction of the Hodge star to each fibre yields a map that does \emph{not} take values in $\bigwedge^2(\Lambda^1(X))$. Indeed, on the fibre at the wedge point $(0,0)$ we 
would, by formal definition, have 
$$\star_{(0,0)}(dx_1\oplus dx_2)=(dx_1\oplus dy_2)\wedge(dy_1\oplus dx_2)\wedge(dy_1\oplus dy_2)\in\bigwedge^3(\Lambda_{(0,0)}^1(X)).$$ 
\end{example}

The example just made indicates that, at a minimum, the Hodge star is not readily defined on exterior degrees of $\Lambda^1(X)$.

\section{The De Rham operator on $\bigwedge(\Lambda^1(X))$}

We have established so far that there is no readily available counterpart of the standard operator $d+d^*$ in the diffeological context. Therefore the De Rham operator on $\bigwedge(\Lambda^1(X))$ 
(endowed with a pseudo-metric) can only be defined as the composition of the standard Clifford action with the Levi-Civita connection, assuming that the latter exists. Another assumption that is needed 
is that $\bigwedge(\Lambda^1(X))$ have only a finite number of components (summands of form $\bigwedge^k(\Lambda^1(X))$), that is, that there is a uniform bound on the dimensions of fibres of 
$\Lambda^1(X)$; as we have seen in the previous section, this is not implied by $X$ having finite dimension.

\subsection{Bounding the dimension of $\Lambda^1(X)$}

Let $X$ be a diffeological space of finite dimension. The next example shows that the set of the dimensions of fibres of $\Lambda^1(X)$ may not have a supremum.

\begin{example}
Consider the following sequence $\{X_n\}_{n\in\matN}$ of diffeological spaces: $X_0$ is the standard $\matR$, and if $X_n$ is already defined then $X_{n+1}$ is obtained as the wedge of $X_n$ at the 
point $x=n+1\in X_n$ with $n+1$ copies of the standard $\matR$ at zero of each copy; formally, $X_{n+1}$ is the result of a sequence of $n+1$ gluings of $X_{n+1}^{(k)}$ (this is $X_{n+1}$ to which $k$ 
copies of $\matR$ have already been added) to the standard $\matR$ along the map $\{n\}\to\{0\}$. Each $X_n$ is thus endowed with a well-defined diffeology based on the gluing construction; furthermore, 
there is a sequence of smooth inclusions $X_0\subset X_1\subset\ldots\subset X_n\subset\ldots$. Let $X=\bigcup_{n\in\matN}X_n$; endow it with the minimal diffeology such that all these inclusions are smooth 
(the diffeology of $X$ is essentially the union of the diffeologies of all $X_n$ and can be called the inductive limit diffeology). 

Since all the gluing points are isolated and the differential forms are local, we obtain that $\Lambda_{(n)}^1(X)\cong\Lambda_{(n)}^1(X_n)$ and in particular has dimension $n$, by the reasoning made 
already. Thus, $\Lambda^1(X)$ admits fibres of arbitrarily large dimension, although the dimension of $X$ itself is equal to $1$.
\end{example}

The above example shows that the existence of $\max\{\dim(\Lambda_x^1(X))\}$ should be imposed as a separate assumption. If such a maximum exists, we say that $\Lambda^1(X)$ has \textbf{bounded 
dimension}.

\subsection{The definition of the De Rham operator}

Let $X$ be a diffeological space of finite dimension and such that the following two conditions hold. First, $\Lambda^1(X)$ has bounded dimension; second, $\Lambda^1(X)$ admits a pseudo-metric 
$g^{\Lambda}$ such that there exists a Levi-Civita connection $\nabla^{\Lambda}$ on $(\Lambda^1(X),g^{\Lambda})$. Let $n=\max_{x\in X}\{\dim(\Lambda_x^1(X))\}$, and consider 
$$\bigwedge(\Lambda^1(X)):=\bigoplus_{k=0}^n\bigwedge^k(\Lambda^1(X)),$$ which is a diffeological vector pseudo-bundle for its standard diffeology based on the tensor product diffeology. Then 
the standard Clifford action $c^{\Lambda}$ of $\cl(\Lambda^1(X),g^{\Lambda})$ on $\bigwedge(\Lambda^1(X))$ is smooth (\cite{pseudo-bundles-exterior-algebras}); furthermore, the connection 
$\nabla^{\Lambda}$ induces (in a completely standard way) a connection $\nabla^{\bigwedge(\Lambda)}$ on $\bigwedge(\Lambda^1(X))$. 

\begin{defn}
Let $X$ be a diffeological space satisfying the above conditions. The \textbf{diffeological De Rham operator} on $(X,g^{\Lambda})$ is the operator 
$$D_{dR}:C^{\infty}(X,\bigwedge(\Lambda^1(X)))\to C^{\infty}(X,\bigwedge(\Lambda^1(X)))$$ defined as the composition
$$D_{dR}=c^{\Lambda}\circ\nabla^{\bigwedge(\Lambda)}.$$
\end{defn}

\begin{example}
Let $X$ be a wedge of two copies of the standard $\matR^2$ at the origin; endow $X$ with the corresponding gluing diffeology and denote the two copies of $\matR^2$ by $X_1$ and $X_2$ respectively. 
Each of the two spaces $\Lambda^1(X_1)$, $\Lambda^1(X_2)$ can thus be standardly identified with $T^*\matR^2$, and every fibre written in the form, 
$\Lambda_{(x_1,y_1)}^1(X_1)=\mbox{Span}(dx_1,dy_1)$ and $\Lambda_{(x_2,y_2)}^1(X_2)=\mbox{Span}(dx_2,dy_2)$. Let $g_i^{\Lambda}$ be the pseudo-metric on $\Lambda^1(X_i)$ given by 
$$g_i^{\Lambda}(x_i,y_i)=\frac{\partial}{\partial x_i}\otimes\frac{\partial}{\partial x_i}+e^{x_iy_i}\frac{\partial}{\partial y_i}\otimes\frac{\partial}{\partial y_i}\,\,\mbox{ for }i=1,2,$$ where $\frac{\partial}{\partial x_i}$ 
is the dual map of $dx_i$ and $\frac{\partial}{\partial y_i}$ is the dual of $dy_i$. Let $g^{\Lambda}$ be the corresponding induced pseudo-metric on $\Lambda^1(X)$. Notice that $g_1^{\Lambda}$ and 
$g_2^{\Lambda}$ are automatically compatible, since all the compatibility conditions are empty in the case of gluing along a single-point set; in particular, we have 
$$g^{\Lambda}(0,0)=\frac12(\frac{\partial}{\partial x_1}\otimes\frac{\partial}{\partial x_1}+\frac{\partial}{\partial y_1}\otimes\frac{\partial}{\partial y_2}+
\frac{\partial}{\partial x_2}\otimes\frac{\partial}{\partial x_2}+\frac{\partial}{\partial y_2}\otimes\frac{\partial}{\partial y_2}).$$

Every fibre of $\bigwedge(\Lambda^1(X))$ outside of the wedge point coincides with $\bigwedge(\matR^2)$, while at the wedge point it is $\bigwedge(\matR^2\oplus\matR^2)$. The Clifford algebra 
$\cl(\Lambda^1(X),g^{\Lambda})$ behaves as $\cl(\Lambda^1(X_i),g_i^{\Lambda})$, for the appropriate $i=1,2$, outside of the wedge point. At the wedge point it is equivalent to 
$\cl(\matR^4,\langle\cdot,\cdot\rangle)$, where $\langle\cdot,\cdot\rangle$ is the canonical scalar product. The Clifford action $c^{\Lambda}$ is standardly defined; for instance, 
$$c^{\Lambda}(dx_1)(dy_2)=dx_1\wedge dy_2-\frac12.$$

The sections of $\Lambda^1(X)$ are in one-to-one correspondence with pairs of sections of $\Lambda^1(X_1)$ and $\Lambda^1(X_2)$: if $s\in C^{\infty}(X,\Lambda^1(X))$ then 
$s_1=\tilde{\rho}_1^{\Lambda}\circ s\circ\tilde{i}_1\in C^{\infty}(X_1,\Lambda^1(X_1))$ and $s_2=\tilde{\rho}_2^{\Lambda}\circ s\circ i_2\in C^{\infty}(X_2,\Lambda^1(X_2))$. \emph{Vice versa}, and this is 
specific to the present instance, given $s_1\in C^{\infty}(X_1,\Lambda^1(X_1))$ and $s_2\in C^{\infty}(X_2,\Lambda^1(X_2))$, we set $s(0,0)=s_1(0,0)\oplus s_2(0,0)$, while outside the wedge point 
$s$ is equivalent to either $s_1$ or $s_2$ in the obvious sense. 

Both $X_1$ and $X_2$ are endowed with the standard Levi-Civita connections $\nabla^{\Lambda,1}$ and $\nabla^{\Lambda,2}$ respectively. These induce the Levi-Civita $\nabla^{\Lambda}$ on $X$ 
(relative to the induced pseudo-metric $g^{\Lambda}$); for a section $s$ of $\Lambda^1(X)$ determined by a pair of sections $s_1\in C^{\infty}(X_1,\Lambda^1(X_1))$ and 
$s_2\in C^{\infty}(X_2,\Lambda^1(X_2))$, $\nabla^{\Lambda}s$ coincides (up to appropriate identifications) with either $\nabla^{\Lambda,1}s_1$ or $\nabla^{\Lambda,2}s_2$, while at the wedge point 
its value is essentially $(\nabla^{\Lambda,1}s_1)(0,0)\oplus(\nabla^{\Lambda,2}s_2)(0,0)$ (a formalization of this construction is available in \cite{connectionsLC}). A fully standard procedure completes 
the construction.
\end{example}

\begin{rem}
In a previous section we indicated that one (but not the only one) problem in defining the Hodge star for diffeological spaces is that the standard definition does not, in general, yield a map 
$\bigwedge^k(\Lambda^1(X))\to\bigwedge^{n-k}(\Lambda^1(X))$ for a fixed $n$ independent of $k$. It follows that there might be a way to define $\star$ as taking values in $\bigwedge(\Lambda^1(X))$ if 
$\Lambda^1(X)$ has bounded dimension. Since this was not the only difficulty in extending the definition of $d+d^*$ to the diffeological context (recall that already $d$ does not descend to a pseudo-bundle 
map on $\Lambda^1(X)$), we do not go in that direction for now.
\end{rem}

\subsection*{Appendix: on the possibility of the De Rham-like operator $d^*+d^{**}$}

We briefly consider here the possibility of defining a De Rham-like operator $d^*+d^{**}$, based on the map $d^*:(\Omega^{k+1}(X))^*\to(\Omega^k(X))^*$ dual to the differential. This construction comes from 
the following observations. One, each dual pseudo-bundle $(\Lambda^k(X))^*$ embeds into the trivial pseudo-bundle $X\times(\Omega^k(X))^*$ via the map $(\pi^{\Omega^k,\Lambda^k})^*$ that is the map 
dual to the defining projection $\pi^{\Omega^k,\Lambda^k}:X\times\Omega^k(X)\to\Lambda^k(X)$ (the dual map is an embedding simply because $\pi^{\Omega^k,\Lambda^k}$ is surjective, and by definition 
of the dual pseudo-bundle diffeology). Suppose for the moment that we have 
$$d^*\left((\pi^{\Omega^{k+1},\Lambda^{k+1}})^*((\Lambda^{k+1}(X))^*)\right)\subseteq(\pi^{\Omega^k,\Lambda^k})^*((\Lambda^k(X))^*);$$ then $d^*$ is well-defined as a map
$$d^*:(\Lambda^{k+1}(X))^*\to(\Lambda^k(X))^*.$$

The second observation is that if $\Lambda^k(X)$ has only finite-dimensional fibres, then all of these fibres are standard; so if each $\Lambda^k(X)$ is endowed with a pseudo-metric $g^{\Lambda^k}$ 
then the corresponding pairing map $\Phi_{g^{\Lambda^k}}$ is a diffeomorphism $\Lambda^k(X)\to(\Lambda^k(X))^*$. Therefore the composition
$$(\Phi_{g^{\Lambda^k}})^{-1}\circ d^*\circ\Phi_{g^{\Lambda^{k+1}}},$$ also denoted by $d^*$, is a well-defined smooth operator $\Lambda^{k+1}(X)\to\Lambda^k(X)$. The corresponding dual map 
$$(d^*)^*:(\Lambda^k(X))^*\to(\Lambda^{k+1}(X))^*$$ likewise provides us with a well-defined operator
$$d^{**}:=(\Phi_{g^{\Lambda^{k+1}}})^{-1}\circ(d^*)^*\circ\Phi_{g^{\Lambda^k}}:\Lambda^k(X)\to\Lambda^{k+1}(X).$$ It then suffices to assume that $X$ has finite dimension $n$ to obtain a well-defined 
De Rham-like operator on $\Lambda(X):=\bigoplus_{k=0}^n\Lambda^k(X)$ which is
$$d^*+d^{**}:\Lambda(X)\to\Lambda(X).$$ 

Let us now consider the potential inclusion 
$$d^*\left((\pi^{\Omega^{k+1},\Lambda^{k+1}})^*((\Lambda^{k+1}(X))^*)\right)\subseteq(\pi^{\Omega^k,\Lambda^k})^*((\Lambda^k(X))^*).$$ Let $\alpha_{k+1}^*\in(\Lambda^{k+1}(X))^*$; consider 
$$d^*((\pi^{\Omega^{k+1},\Lambda^{k+1}})^*(\alpha_{k+1}^*))(\omega_k)=\alpha_{k+1}^*(d\omega_k+\Omega_x^{k+1}(X)).$$ Under the assumption that all $\Lambda_x^1k(X)$ are finite-dimensional, it 
is the image of some $\alpha_k^*\in(\Lambda^k(X))^*$ if and only if 
$$d\omega_k+\Omega_x^{k+1}(X)\in\mbox{Ker}(\alpha_{k+1}^*)\,\,\mbox{ for all }\,\omega_k\in\Omega_x^k(X).$$ Although this is a less restrictive condition than $d\omega_k\in\Omega_x^{k+1}(X)$, there is 
no obvious reason for it to hold \emph{a priori}; it essentially requires that each element of $(\Lambda_x^{k+1}(X))^*$ vanish on the image $\pi^{\Omega^{k+1},\Lambda^{k+1}}(B_{dR}^{k+1}(X))$ of the space 
of the coboundaries. We thus conclude that the operator $d^*+d^{**}$ might be defined, not on the entire pseudo-bundle $\oplus\Lambda^k(X)$, but rather on its reduction by the complex of the coboundaries, 
by which we mean the pseudo-bundle obtained by taking, instead of $\Lambda^k(X)=(X\times\Omega^k(X))/(\cup_{x\in X}(\{x\}\times\Omega_x^k(X)))$, its quotient pseudo-bundle 
$$\Lambda_{dR}^k(X):=(X\times\Omega^k(X))/(\cup_{x\in X}(\{x\}\times\mbox{Span}(\Omega_x^k(X),B_{dR}^k(X)))).$$ We leave for other work the question of whether the construction thus obtained would 
be anything other than trivial.

\vspace{1cm}

\noindent University of Pisa \\
Department of Mathematics \\
Via F. Buonarroti 1C\\
56127 PISA -- Italy\\
\ \\
ekaterina.pervova@unipi.it\\

\end{document}